\documentclass[10pt]{amsart}

\usepackage{graphicx}
\usepackage{amsfonts,amsmath,latexsym,amssymb,amsthm}
\usepackage{verbatim}
\usepackage{mathpazo}

\textheight=22cm
\textwidth=13.5cm
\hoffset=-1cm
\parindent=16pt

\newcounter{theoremcounter}
\newcounter{lemmacounter}

\newcounter{dummycounter}

\newcounter{corcounter}

\newcounter{emptycounter}
\newcounter{defcounter}

\newtheorem{theorem}[theoremcounter]{Theorem}

\newtheorem{lemma}[lemmacounter]{Lemma}

\newtheorem{proposition}[lemmacounter]{Proposition}
\newtheorem{corollary}[corcounter]{Corollary}

\newtheorem{definition}[defcounter]{Definition}

\numberwithin{equation}{section}
\numberwithin{lemmacounter}{section}
\numberwithin{propcounter}{section}
\numberwithin{corcounter}{section}
\numberwithin{conjcounter}{section}
\numberwithin{theoremcounter}{section}
\numberwithin{probcounter}{section}

\newcounter{eqncounter}

\numberwithin{equation}{eqncounter}

%
%

\def\IR{\mathbb R}
\def\IC{\mathbb C}
\def\IZ{\mathbb Z}

\def\IQ{\mathbb Q}

\def\P{\mathcal{P}}

\def\kbar{\overline{k}}
\def\coll{\mathcal{C}}
\def\M{M}
\def\M{M}
\def\S_I{Z_I}
\def\ST{Z(T)}
\def\SXd{Z(X^d)}
\def\SXm{Z(X^m)}
\def\Sempty{Z_{\emptyset}}

\def\valpha{{\mbox{\boldmath $\alpha$}}}

\def\z{{\bf z}}
\def\x{{\bf x}}
\def\y{{\bf y}}

\def\Oseen{{\mathcal{O}}}
\def\A{{\mathfrak{A}}}
\def\C{{\mathfrak{C}}}
\def\c{{\mathfrak{c}}}

\def\E{E}
\def\F{{F}}

\def\m{{\mathfrak{m}}}
\let\rho\varrho

\def\ti{{\tau_{{\bf j}}}}
\def\Ti{{\phi_{{\bf j}}}}
\def\i{{{\bf j}}}
\def\0{{{\bf 0}}}
\def\tr{tr_{{\bf j}}}

\def\vx{{\bf x}}
\def\vy{{\bf y}}

\def\mug{\mu_g}
\def\mun{\eta}
\def\eO{\eta_{\Oseen}}

\def\vdelta{{\mbox{\boldmath $\delta$}}}

\def\Vol{\textup{Vol}}
\def\Pic{\textup{Pic}}

\def\Lam1{\Lambda_1}

\def\q{q'}
\def\r{r'}
\def\s{s'}

\def\Ci{C_{\bf{j}}}
\def\C0{C_{\bf{0}}}
\def\Di{{F}_{\bf{j}}}
\def\Da{D}
\def\SIi0{S_{F}(\j)}
\def\SiF{Z_{\Di}}
\def\SibF{Z_{{\F}_{\j}}}
\def\SI0i{S_{F}({\bf{0}})}

\def\psiTi{\psi_{\bf j}}
\def\j{{\bf{j}}}

\def\Log{{\log^+}}
\def\c1{c_1}

\def\ka{\kappa}
\def\Cem{C_{e,m}}

\def\N{\mathfrak{N}}
\def\Ze{\mathcal{Z}_1}
\def\Zz{\mathcal{Z}_2}
\def\Zi{\mathcal{Z}_p}

\def\BK{B_K}
\def\Bk{B_k}

\begin{document}
\title{Integral points of fixed degree and bounded height}

\author{Martin Widmer}

\address{Department of Mathematics\\ 
Royal Holloway, University of London\\ 
TW20 0EX Egham\\ 
UK}

\email{martin.widmer@rhul.ac.uk}

\date{\today}

\subjclass[2010]{Primary 11R04; Secondary 11G50, 11G35}

\keywords{Heights, integral points, algebraic integers, Northcott's Theorem, counting, lattice points, flows on homogeneous spaces, Pisot numbers}

\maketitle

\begin{abstract}
By Northcott's Theorem there are only finitely many algebraic points in affine $n$-space of fixed degree $e$ over a given number field and of height at most $X$. Finding the asymptotics for these cardinalities as $X$ becomes large is a long standing problem
which is solved only for  $e=1$ by Schanuel, for $n=1$ by Masser and Vaaler, 
and for $n$ ``large enough''  by Schmidt, Gao, and the author. In this paper
we study the case where the coordinates of the points are restricted to algebraic integers,
and we derive the analogues of Schanuel's, Schmidt's, Gao's and the author's results.
The proof invokes tools from dynamics on homogeneous spaces, algebraic number theory,
geometry of numbers, and a geometric partition method due to Schmidt. 
\end{abstract}

\section{Introduction}\label{intro}

In this article we count algebraic points of bounded Weil height with integral coordinates, generating an extension of given degree over a fixed number field. 

Let $k$ be a number field, let $\kbar$ be an algebraic closure of $k$, and
let $H$ be the absolute multiplicative (affine) Weil height on $\kbar^n$ (for the definition see (\ref{definitionheight}) below).
One of the most fundamental and important properties of the height 
asserts that subsets of $\kbar^n$ of uniformly bounded height and degree are finite.
This result was shown by Northcott  \cite{Northcott50} in 1950, and his proof provides explicit upper bounds. 
However, for big $n$ these
estimates are rather poor, and even nowadays,  the correct order of magnitude is known only in some special cases.

In 1962 Lang \cite{LangDGold} proposed the problem of asymptotically counting points of bounded height
in a fixed number field, i.e., to count points in $\kbar^n$ of degree $1$ over $k$. 
This problem has been solved by Schanuel \cite{Schanuel1} in 1964, with a detailed proof \cite{25} published 15 years later. 
The problem of counting points of bounded height and of
fixed degree $e>1$ over a given number field $k$ appears to be much more difficult.
Indeed, it took over $40$ years before the first significant improvement of Northcott's Theorem was established.
In 1991 Schmidt \cite{22} obtained upper bounds that greatly improved upon Northcott's bounds. 
However, when the degree and the dimension are both bigger than $1$ Schmidt's bounds are still significantly larger than what one expects.
Later, in \cite{14} Schmidt established the asymptotics
for points quadratic over $\IQ$, and this in all dimensions $n$.
This in turn yield new results on a generalized version of Manin's conjecture (the special case $n=2$ provides one of the rare examples of a cubic four fold for which the Batyrev-Manin conjecture is established and, as observed by Le Rudulier \cite{LeRudulier},  leads to a counterexample to Peyre's predicted constant).  
Soon afterwards Gao \cite{7} gave asymptotics for points in $n$ dimensions of degree $e$ over $\IQ$, subject to the 
constraint $n>e$.
The case $n=1$ was treated by Masser and Vaaler in \cite{37}, and was generalized in \cite{1} by the same authors to allow arbitrary ground fields $k$.
The author \cite{art2} has established asymptotic estimates for points in $n$ dimensions of fixed degree $e$ over an arbitrary number field, provided $n>5e/2+5$. A short survey on counting points of fixed degree is given in Section 4 of Bombieri's
article \cite{BomSur}.

Regarding integral points of fixed degree $e>1$ the subject is less developed.
For a number field $k$ let us write $N(\Oseen_{k}(n;e),X)$ for the number of points 
$\valpha=(\alpha_1,\ldots,\alpha_n)$ of absolute multiplicative Weil height 
no larger than $X$, whose coordinates are algebraic
integers with $[k(\alpha_1,\ldots,\alpha_n):k]=e$.
In \cite[p.81]{3} Lang has stated without proof  
\begin{alignat}1\label{Lang}
N(\Oseen_k(1;1),X)=\gamma_kX^m(\log X)^{q_k}+O(X^m(\log X)^{q_k-1}).
\end{alignat}
Here $m=[k:\IQ]$, $q_k$ is the rank of the group of units and $\gamma_k$ is an unspecified positive constant 
depending on $k$. The formula (\ref{Lang}) can easily be deduced from a counting principle of Davenport \cite{15},
but it is not  a straightforward application of counting lattice points in homogeneously expanding domains (cf. \cite[p.81]{3}).
The asymptotics for $N(\Oseen_k(n;1),X)$ can also be obtained from \cite[Theorem 3.11.3]{ChambertLoirTschinkel12}. 
Regarding higher degrees Chern and Vaaler \cite{43} proved asymptotic estimates for the number of monic polynomials of fixed degree with rational integral coefficients and 
bounded Mahler measure. As these estimates are of polynomial growth, and since the Mahler measure is multiplicative, one can easily see that the reducible polynomials 
do not effect the asymptotics.
Thus Chern and Vaaler's result implies asymptotics for $N(\Oseen_{\IQ}(1;e),X)$. More precisely, their Theorem 6 yields
\begin{alignat}1\label{ChernVaaler}
N(\Oseen_{\IQ}(1;e),X)=c_eX^{e^2}+O(X^{e^2-1}),
\end{alignat}
with a positive and explicit constant $c_e$ depending on $e$. Very recently, Barroero \cite{Barroero13} has generalized 
(\ref{ChernVaaler}) to arbitrary ground fields $k$, and then further generalized  this to $S$-integers \cite{Barroero14}.
Barroero's approach follows the one in \cite{1} of counting polynomials of degree $e$. This strategy
is more straightforward and easier than ours but, unfortunately, works only for $n=1$. 

One of our goals here is to deduce statements about points with integral coordinates
analogous to the results of Schanuel, Schmidt, Gao, and the author alluded
to above. This is the first attempt to prove asymptotic estimates for 
$N(\Oseen_{k}(n;e),X)$ with the exception of the special cases $e=1$ or $n=1$.\\

Another new aspect of this article is that our methods allow us to prove a multi-term expansion of $N(\Oseen_{k}(n;e),X)$.
For instance, we are able to find the first $q_k+1$ leading terms in (\ref{Lang}), and an error term of order $X^{m-1}(\log X)^{q_k}$.
This is in contrast to the results on points of fixed degree, mentioned in the previous paragraph.
The $q_k+1$ different main terms of decreasing order have a simple geometric interpretation which we shall
explain later in Section \ref{generalisation}. The main terms can be expressed using Laguerre polynomials, e.g.,  
\begin{alignat}1\label{app1}
N(\Oseen_k(n;1),X)=\Bk^n X^{mn}L_{q_k}(-\log X^{mn})+O(X^{mn-1}(\log X)^{q_k}). 
\end{alignat}
Here $L_{q_k}(x)$ is the $q_k$-th Laguerre polynomial, and $\Bk$ is a field invariant defined later on. 
The somewhat unexpected appearance of the Laguerre polynomial in the main term is another new feature of our result.

It is typical with these types of asymptotic expansions for the main term to be of the form
$X^{a}P(\log X)$ for some polynomial $P(x)$. This polynomial is often obtained via a meromorphic
continuation of the corresponding height zeta function and a suitable Tauberian theorem; see, e.g., Franke, Manin, and Tschinkel's pioneering article \cite[Corollary]{38} for the case of rational points on Flag manifolds $V$. In their case the degree $\deg P$ is also related to the rank of a group, more precisely,
$\deg P$ is the rank of the Picard group $\Pic(V)$ minus $1$\footnote{There is a misprint in their Corollary, $t$ should read $t-1$.}.
Franke, Manin, and Tschinkel obtained their result by expressing the corresponding height zeta function as an Eisenstein series and then using Langland's work to 
study its analytic properties. Similar, technically intricate, methods have been used in \cite{ChambertLoirTschinkel10} and \cite{ChambertLoirTschinkel12}. 
Our proof makes no use of complex analysis. Indeed, we reverse the situation here, and we say
something about the analytic properties of the height zeta function $\zeta_{k,n,e}(s)=\sum_{\valpha\in \Oseen_{k}(n;e)}H(\valpha)^{-s}$ 
using our estimates for $N(\Oseen_k(n;e),X)$.\\

To state our first result we need some notation. 
Let $K\subset \kbar$ be a number field, write $d=[K:\IQ]$ for its degree, and
let $M_K$ denote the set of places of $K$. For each place $v$ we choose the unique representative $|\cdot|_v$ that either extends the usual Archimedean absolute value on $\IQ$ or a usual $p$-adic absolute value on $\IQ$. Let $K_v$ be the completion of $K$ with respect to $v$, and let
$\IQ_v$ the completion with respect to the place of $\IQ$ below $v$, and write $d_v=[K_v:\IQ_v]$ for the local degree at $v$. For a point $\valpha\in K^n$ we define the absolute multiplicative (affine) Weil height of $\valpha$ as
\begin{alignat}1\label{definitionheight}
H(\valpha)=\prod_{v\in M_K}\max\{1,|\alpha_1|_v,\ldots,|\alpha_n|_v\}^{\frac{d_v}{d}}.
\end{alignat}
As is well-known $H(\valpha)$ is independent of the number field $K$ containing the coordinates $\alpha_i$, and hence 
$H(\cdot)$ defines a genuine function on $\kbar^n$.

For a subset $S$ of $\kbar^n$
of uniformly bounded degree and real numbers $X\geq 1$ we define the counting function 
\begin{alignat*}1
N(S,X)=|\{\valpha\in S;H(\valpha)\leq X\}|.
\end{alignat*}
Thanks to Northcott's Theorem the quantity above is finite for each $X$.
For positive rational integers $e$ and $n$ we define the set of integral points in $n$ dimensions of degree $e$ over the field $k$
\begin{alignat*}1
\Oseen_k(n;e)=\{\valpha\in \Oseen_{\kbar}^n; [k(\valpha):k]=e\}.
\end{alignat*}
Here $\Oseen_{\kbar}\subset \kbar$ denotes the ring of algebraic integers, and $k(\valpha)=k(\alpha_1,\ldots,\alpha_n)$.
Let $\coll_e(k)$ be the collection of all field extensions of $k$ of degree $e$, i.e.,
\begin{alignat*}1
\coll_e(k)=\{K\subset \kbar; [K:k]=e\}.
\end{alignat*}
For a number field $K$ we write $\Delta_K$ for the discriminant of $K$,
$r_K$ for the number of real, $s_K$ for the number of pairs of complex conjugate embeddings of $K$, and $q_K=r_K+s_K-1$ for the rank of the
group of units. Moreover, we set
\begin{alignat*}1
t_e(k)&=\sup\{q_K;K\in \coll_e(k)\}=e(q_k+1)-1,\\
\BK&=\frac{2^{r_K}(2\pi)^{s_K}}{\sqrt{|\Delta_K|}},
\end{alignat*}
and for $0\leq i\leq t_e(k)$ we introduce the formal sum
\begin{alignat}1\label{Dconstant}
D_i=D_i(k,n,e)=\sum_{K\in \coll_e(k) \atop q_K\geq i}\frac{\BK^n}{i!}{q_K\choose i}.
\end{alignat}
For $e>1$ we define
\begin{alignat*}1
\Cem=\max\{2+\frac{4}{e-1}+\frac{1}{m(e-1)},7-\frac{e}{2}+\frac{2}{me}\}\leq 7.
\end{alignat*}
Finally, we put $\Log X=\max\{1,\log X\}$.
Now we can state our first result.
\begin{theorem}\label{mainthm1}
Let $k$ be a number field and $m=[k:\IQ]$.
Suppose that either $e=1$ or that $n>e+\Cem$, and set $t=t_e(k)$. Then the sums in (\ref{Dconstant}) converge, and for $X\geq 1$ we have 
\begin{alignat}1\label{mainthm1ineq}
\left|N(\Oseen_k(n;e),X)-\sum_{i=0}^{t}D_iX^{men}(\log X^{men})^i\right|\leq c_1X^{men-1}(\log^+ X)^{t}
\end{alignat}
for some positive constant $c_1=c_1(n,m,e)$ depending only on $n$, $m$ and $e$.
\end{theorem}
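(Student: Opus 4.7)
The plan is to decompose the counting by the field generated by $\valpha$. Writing
$$N(\Oseen_k(n;e),X) \;=\; \sum_{K\in\coll_e(k)} N_K'(X)\qquad\text{with}\qquad N_K'(X)=|\{\valpha\in\Oseen_K^n:H(\valpha)\le X,\ k(\valpha)=K\}|,$$
and setting $\widetilde N_K(X)=|\{\valpha\in\Oseen_K^n:H(\valpha)\le X\}|$, M\"obius inversion over the finite lattice of intermediate subfields $k\subseteq K'\subseteq K$ expresses each $N_K'$ in terms of the $\widetilde N_{K'}$. The problem reduces to estimating $\widetilde N_K(X)$ uniformly in $K$, with the subfield corrections handled by induction on $e$ and absorbed into the error term.

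For each fixed $K$, embed $\Oseen_K^n$ as a lattice of covolume $|\Delta_K|^{n/2}$ in $(K\otimes_\IQ\IR)^n\simeq\IR^{r_K n}\times\IC^{s_K n}$. Integrality forces $|\alpha_i|_v\le 1$ at every non-archimedean place, so $H(\valpha)\le X$ becomes the bounded archimedean condition $\prod_{v\mid\infty}\max\{1,|\alpha_1|_v,\ldots,|\alpha_n|_v\}^{d_v/d}\le X$, defining a bounded region $S_K(X)$. I would compute its volume in logarithmic coordinates $\xi_{i,v}=\log^+|\alpha_i|_v$ at the infinite places: the defining inequality becomes $\sum_{v\mid\infty}(d_v/d)\max_i\xi_{i,v}\le\log X$, and fibering over a simplex of dimension $q_K=r_K+s_K-1$ yields the binomial factors $\binom{q_K}{i}/i!$ characteristic of a Laguerre expansion. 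The expected outcome, in analogy with (\ref{app1}), is
$$\mathrm{Vol}(S_K(X)) \;=\; \BK^n|\Delta_K|^{n/2}\sum_{i=0}^{q_K}\frac{1}{i!}\binom{q_K}{i}X^{men}(\log X^{men})^i+O\bigl(X^{men-1}(\log^+X)^{q_K}\bigr).$$

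The technical core is the passage from volume to lattice-point count. I would invoke a Davenport-type lemma for Lipschitz-parametrisable sets, refined as in the author's earlier papers to be sensitive to the successive minima of $\Oseen_K^n$. The region $S_K(X)$ is long and thin along the directions on which $\Oseen_K^\times$ acts by multiplication, so the naive boundary error risks dominating the main term. To control this I would slice $S_K(X)$ along a fundamental domain for the unit action via Schmidt's geometric partition method, and bound the counts per slice using the dynamics of a diagonal torus flow on a suitable homogeneous space of the type $\mathrm{SL}_d(\IR)/\mathrm{SL}_d(\IZ)$---this is the ``dynamics on homogeneous spaces'' input announced in the abstract. I expect this to be the hardest step, and it is precisely here that the hypothesis $n>e+\Cem$ must enter: it guarantees that, summed over $K\in\coll_e(k)$, the accumulated boundary errors and successive-minima losses stay within $O(X^{men-1}(\log^+X)^t)$.

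Finally, summing over $K$ and swapping the order of summation converts $\sum_K\BK^n\binom{q_K}{i}/i!$ into $D_i$ exactly, producing the main term of (\ref{mainthm1ineq}). Convergence of each $D_i$ and of the summed error follows from the standard polynomial-in-$D$ upper bound on the number of degree-$en$ extensions of $\IQ$ with $|\Delta_K|\le D$, combined with the decay $\BK^n\ll|\Delta_K|^{-n/2}$, which is strong enough under the hypothesis $n>e+\Cem$. The subfield corrections produced by M\"obius inversion are, by induction on $e$, of order $O(X^{me'n}(\log^+X)^{t_{e'}(k)})$ for some $e'<e$ and are therefore absorbed into the stated error.
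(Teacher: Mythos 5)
Your outline correctly identifies several ingredients the paper uses (lattice-point counting for Lipschitz sets, Schmidt's partition method, the diagonal flow, summation over $K$ via field-counting bounds), but the core of your plan --- M\"obius inversion over subfields followed by a \emph{uniform} estimate for $\widetilde N_K(X)=|\{\valpha\in\Oseen_K^n:H(\valpha)\le X\}|$ with errors summed termwise over $K$ --- cannot work. Fix $X$ and let $|\Delta_K|\to\infty$ through $\coll_e(k)$: the volume term $\Vol(S_K(X))/|\Delta_K|^{n/2}$ tends to $0$, while $\widetilde N_K(X)\geq|\{\valpha\in\Oseen_k^n:H(\valpha)\le X\}|\gg X^{mn}$ because $\Oseen_k^n\subset\Oseen_K^n$. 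Hence the individual error $|\widetilde N_K(X)-\Vol(S_K(X))/|\Delta_K|^{n/2}|$ is bounded \emph{below} by $\gg X^{mn}$ for all but finitely many of the infinitely many $K\in\coll_e(k)$, and the triangle-inequality sum of these errors diverges. Equivalently: no lattice-point theorem applied to the full lattice $\sigma\Oseen_K^n$ can give an error decaying in $K$, since $\lambda_1(\phi\sigma\Oseen_K^n)\asymp 1$ uniformly. The subfield points must be excised \emph{before} the geometry-of-numbers step. This is exactly what the paper's device accomplishes: it counts only ``projectively primitive'' points (those whose coordinate \emph{ratios} generate $K/k$), shows these lie in $\Lambda(l)=\{\vx\in\Lambda:|\vx|\ge\lambda_l\}$ for a critical successive minimum $\lambda_l$ tied to the subfield structure (Lemma \ref{lemma2.5}), and applies Corollary \ref{TWi1cor} with $a=l$ to get an error $\ll T^{n-1/d}/(\lambda_1^{n(l-1)}\lambda_l^{n(d-l+1)-1})$ that genuinely decays in $K$. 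The residual points with $k(\valpha)=K$ but non-generating ratios are then disposed of by a crude Schmidt-type upper bound (Lemma \ref{lemmanonproprimpts}), not by inversion.

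Your convergence argument for the summed error is also not the one that works. Counting fields by discriminant (Schmidt's bound $N_\Delta(\coll_e(k),T)\ll T^{(e+2)/4}$) is used in the paper only for the sum of \emph{main} terms $\sum_K|\Delta_K|^{-n/2}$. The per-field error decays like $\delta_g(K/k)^{-\mug}$ with $\mug=mn(e-g)-1$, where $\delta_g$ is the new invariant $\inf\{H(\alpha,\beta):k(\alpha,\beta)=K,\,[k(\alpha):k]=g\}$; converting this to a power of $|\Delta_K|$ via Silverman's inequality $\delta_g\gg|\Delta_K|^{1/(2me(e-1))}$ loses so much that one would need $n$ of order $e^2$ rather than $n>e+\Cem$. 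The paper instead counts fields directly by $\delta_g$ (via Schmidt's degree-$e$ point count), for which the exponent $\gamma_g$ beats $\mug$ precisely when $n>e+\Cem$. So the hypothesis on $n$ enters through the inequality $\gamma_g-\mug\le -2/e$ (Lemma \ref{lemmanecond}), not through a boundary-error bookkeeping in the homogeneous-space step as you suggest.
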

We remark that the sum in (\ref{mainthm1ineq}) can be written as the weighted sum of Laguerre polynomials 
$X^{men}\sum_q \beta_q L_{q}(-\log X^{men})$.
Here $q$ runs over the finite set $\{q_K;K\in \coll_e(k)\}$, and $\beta_q=\beta_q(k,e,n)=\sum_K \BK^n$, where the sum is taken over all
$K\in \coll_k(e)$ with $q_K=q$.

Note that for $e\geq 9$ the condition $n>e+\Cem$ is equivalent to $n>e+2$.
Unfortunately, this is probably not the sharp bound.  
However, as 
$N(\Oseen_k(1;e),X)\leq N(\Oseen_k(n;e),X)$ we see by comparing with (\ref{ChernVaaler}) that if $m=1$ then (\ref{mainthm1ineq}) 
cannot hold for $n<e$.
Borrowing ideas of Masser and Vaaler from \cite{1}, Theorem \ref{mainthm1}, combined with standard estimates for the Mahler measure, shows that
$N(\Oseen_{k}(1;e),X)\gg X^{me^2}(\log X)^{q_k}$. Hence, (\ref{mainthm1ineq}) cannot hold for $n<e$, even if $m>1$.
Note also that for $e=n=2$ the sums in (\ref{Dconstant}) diverge.

Next  let us choose $e=1$. Then we get the formula (\ref{app1}) which is a new result, even for $n=1$. 
Here the multi-term expansion could probably be worked out from the results in \cite{ChambertLoirTschinkel10}, but it is unlikely that the same
error term can be obtained.  

It is probably not too difficult to extend our theorem to the context of Lipschitz heights as in
\cite{1} or even adelic Lipschitz heights as in \cite{art2}. These generalizations would have further applications
such as refined asymptotic estimates for $N(\Oseen_{k}(1;e),X)$, improving upon Barroero's result, or for the number of integral
solutions of fixed degree to a system of linear equations, analogous to the main result in \cite{art3}.
However, to keep the technical difficulties and the required notation at a minimal level, and to emphasize the main ideas and novelties of this work, 
we decided not to include
these generalizations.\\

Let us formally define the height zeta function of $\Oseen_{k}(n;e)$ as 
\begin{alignat*}1
\zeta_{k,n,e}(s)=\sum_{\valpha\in \Oseen_{k}(n;e)}H(\valpha)^{-s}.
\end{alignat*}
The upper bound of order $X^{men}(\log X)^t$ implies that $\zeta_{k,n,e}(s)$ converges in the complex half plane $\Re(s)>men$. 
But Theorem \ref{mainthm1} implies also that $\zeta_{k,n,e}(s)$ has a meromorphic continuation to $\Re(s)>men-1$ with a 
pole at $s=men$ of order $t+1$. More precisely,
setting $D_{t+1}=0$, and using summation by parts, we find that the principal part of the Laurent series at $s=men$ is given by
\begin{alignat*}1
\sum_{i=1}^{t+1}\frac{(men)^i(i-1)!(D_{i-1}+iD_i)}{(s-men)^{i}}.
\end{alignat*}
Theorem \ref{mainthm1} will be proved via our main result Theorem \ref{mainthm2} which we present in the next section.
 
 \section{The main result}\label{generalisation}

Suppose $K$ is a field extension of $k$ of degree $e=[K:k]$, and put $[K:\IQ]=d$, so that $d=em$. 
We denote by $\sigma_1,\ldots,\sigma_d$ 
the embeddings from $K$ to $\IR$ or
$\IC$ respectively, ordered such that
$\sigma_{r+s+i}=\overline{\sigma}_{r+i}$ for 
$1\leq i \leq s$, i.e., $\sigma_{r+s+i}$ and $\sigma_{r+i}$ are complex conjugate.
Let $\Oseen$ be a submodule of the free $\IZ$-module $\Oseen_K$ of full rank.
Let $\A_{\Oseen}$ be the smallest ideal in $\Oseen_K$ that contains $\Oseen$, i.e., $\A_{\Oseen}$ is the intersection
over all ideals in $\Oseen_K$ that contain $\Oseen$.
Set
\begin{alignat}1\label{defeO}
\eO=\N(\A_{\Oseen})^{1/d} \geq 1,
\end{alignat}
where $\N(\A)=|\Oseen_K/\A|$ denotes the norm of a nonzero ideal $\A$ of $\Oseen_K$. 
Furthermore, we define
\begin{alignat*}1
G(K/k)=\{[K_0:k];k\subset K_0\subsetneq K\}
\end{alignat*}
if $K\neq k$, and we put
\begin{alignat*}1
G(K/k)=\{1\}
\end{alignat*}
if $K=k$. 
Then for an integer $g\in G(K/k)$ we define
\begin{alignat*}3
\delta_g(K/k)=\inf\{H(\alpha,\beta); k(\alpha,\beta)=K, [k(\alpha):k]=g\},
\end{alignat*}
and we set
\begin{alignat}1\label{defmug}
\mug=mn(e-g)-1.
\end{alignat}
We remark that $\delta_g(K/k)$ refines the invariant $\delta(K)$ introduced by Roy and Thunder \cite{8}. 
For a point $\valpha \in \kbar^n\backslash \{\0\}$ we write $k(\ldots,\alpha_i/\alpha_j,\ldots)$ for the extension
of $k$ generated by all possible ratios $\alpha_i/\alpha_j$ $(1\leq i,j\leq n, \alpha_j\neq 0)$ of the coordinates of $\valpha$.
Next we introduce the set of ``projectively primitive'' points in $\Oseen^n$
\begin{alignat*}1
\Oseen^n(K/k)=\{\valpha\in \Oseen^n\backslash\{\0\};K=k(\ldots,\alpha_i/\alpha_j,\ldots)\}.
\end{alignat*}
Note that for $n=1$ the set $\Oseen^n(K/k)$ is empty
if $K\neq k$ and equals $\Oseen\backslash\{0\}$ if $K=k$.
For a subset $I\subset\{1,\ldots,r_K+s_K\}$ and $I^c=\{1,\ldots,r_K+s_K\}\backslash I$ we define
\begin{alignat*}1
\Oseen_I^n(K/k)=\{\valpha\in \Oseen^n(K/k); &|\sigma_i(\valpha)|_{\infty}\geq 1 \text{ for }i\in I,\text{ and }\\
&|\sigma_i(\valpha)|_{\infty}<1 \text{ for }i\in I^c\},
\end{alignat*}
where $|\sigma_i(\valpha)|_{\infty}=\max\{|\sigma_i(\alpha_1)|,\ldots, |\sigma_i(\alpha_n)|\}$.
Finally, let $\S_I(T)$ be the measurable set in Euclidean space, defined in (\ref{defSI}), and set $\q=|I|-1$. In 
Section \ref{SectionVolComp} we will show that for $X\geq 1$
\begin{alignat*}1
\Vol \S_I(X^d)=(2^{r_K}\pi^{s_K})^n(-1)^{\q}\left(-1+X^{dn}\sum_{i=0}^{\q}\frac{(-\log X^{dn})^i}{i!}\right).
\end{alignat*}
Recall that $K/k$ is an extension of number fields and $d=[K:\IQ]$. 
We can now state the main result of this article. All our results will
be deduced from this theorem.
\begin{theorem}\label{mainthm2}
Suppose $\q=|I|-1\geq 0$, $X\geq 1$ and either $n>1$ or $K=k$. Then
\begin{alignat*}1
\left|N(\Oseen_I^n(K/k),X)-\frac{2^{s_Kn}\Vol \S_I(X^d)}{(\sqrt{|\Delta_K|}[\Oseen_K:\Oseen])^n}\right|
\leq c_2\sum_{g\in G(K/k)}\frac{X^{dn-1}(\Log X)^{\q}}{\eO^{dn-1}\delta_g(K/k)^{\mug}},
\end{alignat*}
where $c_2=c_2(n,d)$ is a positive constant depending only on $n$ and $d$. 
\end{theorem}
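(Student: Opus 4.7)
The plan is to translate the count into a lattice-point problem via the Minkowski embedding, evaluate it by a Lipschitz-type counting theorem, and then enforce projective primitivity by subtracting the contribution of points whose coordinate ratios land in a proper intermediate field. Concretely, the embedding $\Oseen\hookrightarrow\IR^{r_K}\times\IC^{s_K}\cong\IR^d$, applied coordinatewise, realizes $\Oseen^n$ as a lattice $\Lambda\subset\IR^{dn}$ of covolume $(\sqrt{|\Delta_K|}\,[\Oseen_K:\Oseen])^n/2^{s_K n}$. Since the entries of $\valpha\in\Oseen^n$ are algebraic integers, the non-Archimedean factors in $H(\valpha)$ are trivial, so the joint conditions $H(\valpha)\le X$ and $\valpha\in\Oseen_I^n$---ignoring, for the moment, the primitivity clause on ratios---amount to the Minkowski image of $\valpha$ lying in $\S_I(X^d)$. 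The target main term is then precisely $\Vol\S_I(X^d)/\det\Lambda$.

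Next I would apply a Lipschitz-type lattice-point counting theorem of the sort used in the author's earlier work to estimate $|\Lambda\cap\S_I(X^d)|$. Its output is the main term plus an error controlled by the Lipschitz boundary measure of $\S_I(X^d)$ divided by the first successive minimum of $\Lambda$. The region $\S_I(X^d)$ is essentially a product of $|I|$ ``logarithmic slabs'' in the $I$-directions with bounded pieces in $I^c$-directions, so its boundary is of order $X^{dn-1}(\Log X)^{\q}$. The first successive minimum of $\Lambda$ is in turn bounded below by a constant times $\eO$, since every nonzero element of $\Oseen$ has algebraic norm divisible by $\N(\A_{\Oseen})$ and thus Archimedean length at least of order $\eO$ by the arithmetic--geometric mean inequality. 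Together these account for the $g=e$ summand $X^{dn-1}(\Log X)^{\q}/\eO^{dn-1}$ of the claimed error.

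To enforce projective primitivity, I would subtract the contribution of those $\valpha\in\Oseen^n\cap\S_I(X^d)$ whose ratios $\alpha_i/\alpha_j$ all lie in some intermediate $k\subset K_0\subsetneq K$ with $g=[K_0:k]\in G(K/k)$. Any such $\valpha$ factors as $\beta\cdot(\gamma_1,\ldots,\gamma_n)$ with $\beta\in K^\times$ and $(\gamma_1,\ldots,\gamma_n)\in K_0^n$ generating $K_0$ over $k$; for a suitable coordinate $i$ the pair $(\beta,\gamma_i)$ satisfies $k(\beta,\gamma_i)=K$ and $[k(\gamma_i):k]=g$, so $H(\beta,\gamma_i)\ge\delta_g(K/k)$ by the very definition of $\delta_g(K/k)$. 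Parametrizing these factored points by $\beta$ in a suitable $\Oseen_K$-submodule and the ratio vector in $\Oseen_{K_0}^n$, and running a two-variable volume count that exploits this lower bound on $H(\beta,\gamma_i)$, yields the contribution $X^{dn-1}(\Log X)^{\q}/(\eO^{dn-1}\delta_g(K/k)^{\mug})$, with $\mug=mn(e-g)-1$ coming from the $m(e-g)$ transverse $\IQ$-dimensions plus one degree of freedom saved by the radial rescaling.

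The main obstacle is the Lipschitz count of the second step: the region $\S_I(X^d)$ is noncompact, with logarithmic elongation in the unit-group directions that must be carved into Lipschitz-parametrizable pieces with constants uniform in $K$, $I$, and the submodule $\Oseen$, while the successive-minima information encoded by $\eO$ has to be threaded through the parametrization without leaking spurious powers of $X$ or $\log X$. Navigating this uniformity---and reconciling it with the Möbius-style subtraction over intermediate fields---is where the real work lies.
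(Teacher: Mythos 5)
Your reduction to a lattice-point count and your identification of the main term are correct, and your closing paragraph rightly flags Schmidt's partition-and-renormalization step (cutting $\S_I(X^d)$ into $\ll(\Log X)^{\q}$ cells and applying determinant-one diagonal maps so that each piece lies in a ball of radius $\ll X$ with Lipschitz constant $\ll X$) as unavoidable work. But the decisive gap is elsewhere: your error analysis cannot produce the stated error term. You bound the discrepancy for the full lattice by boundary data divided by the first minimum $\lambda_1\gg\eO$, obtaining $X^{dn-1}(\Log X)^{\q}/\eO^{dn-1}$, and you call this ``the $g=e$ summand.'' There is no such summand: $G(K/k)$ consists of degrees of \emph{proper} intermediate fields, so every $g\in G(K/k)$ satisfies $g\leq e/2$ (or $g=1$ when $K=k$), and since $\delta_g(K/k)\geq 1$ every summand of the claimed bound is at most $X^{dn-1}(\Log X)^{\q}/\eO^{dn-1}$; moreover, by Northcott, $\delta_g(K/k)\to\infty$ as $K$ varies, so the claimed bound is genuinely smaller for all but finitely many $K$. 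The paper obtains the $\delta_g$-savings in the main discrepancy itself, not only in the primitivity correction: projectively primitive points of $\sigma\Oseen^n$ have Euclidean length at least the critical successive minimum $\lambda_l$ of $\phi\sigma\Oseen$ (Lemma \ref{lemma2.5}), so one may count inside $\Lambda(l)$ and invoke Corollary \ref{TWi1cor} with $a$ corresponding to $l$, producing the denominator $\lambda_1^{n(l-1)}\lambda_l^{n(d-l+1)-1}$; the arithmetic input is then $\lambda_1\gg\eO$ and $\lambda_l\gg\eO\delta_g(K/k)$ (Lemmas \ref{minest1} and \ref{lemma2.7}), with $g$ the degree over $k$ of the field generated by the ratios of the first $l-1$ minimal vectors.

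Your mechanism for introducing $\delta_g$ through the non-primitive points also fails. For $\valpha=\beta(\gamma_1,\ldots,\gamma_n)$ with all ratios in some $K_0\subsetneq K$ there is in general no coordinate $i$ with $k(\beta,\gamma_i)=K$: the non-primitive points include those lying entirely in a proper subfield of $K$, and even when $k(\valpha)=K$ a single pair of coordinates need not generate $K$. Hence the inequality $H(\beta,\gamma_i)\geq\delta_g(K/k)$ is unavailable, and the proposed ``two-variable volume count'' has no source for the factor $\delta_g(K/k)^{-\mug}$. The paper instead bounds the non-primitive points inside each renormalized cell by noting that two distinct embeddings of $K$ agree on $K_0$, forcing each coordinate into a hyperplane section of a ball of radius $\ll X$ (Lemma \ref{nonprojprimpts}); the resulting count is again expressed in the successive minima and converted via $\lambda_l\gg\eO\delta_g(K/k)$. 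In short, $\delta_g$ enters through the geometry of the module $\Oseen$ along the flow --- its critical minima --- and not through the heights of the discarded points; without this idea neither the main discrepancy nor the primitivity correction reaches the stated bound.
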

Using $P_{\q}(x)=\sum_{i=0}^{\q}\frac{x^i}{i!}$ we can rewrite the main term as
\begin{alignat*}1
\left(\frac{\BK}{[\Oseen_K:\Oseen]}\right)^n(-1)^{\q}\left(X^{dn}P_{\q}(-\log X^{dn})-1\right).
\end{alignat*}
Note that this expression depends only on the cardinality of $I$ but not on the particular choice of $I$ itself.
Next let us consider some special cases.
We start with the case $K=k$, i.e., $d=m$. Then the statement takes the form
\begin{alignat*}1
\left|N(\Oseen_I^n(k/k),X)-\frac{2^{s_kn}\Vol \S_I(X^m)}{(\sqrt{|\Delta_k|}[\Oseen_k:\Oseen])^n}\right|
\leq c_2(n,m)\frac{X^{mn-1}(\Log X)^{\q}}{\eO^{mn-1}}.
\end{alignat*}
Now we take $n=1$, $\Oseen=\Oseen_k$, and let us assume $r_k\geq 1$. If we choose $I=\{1\}$ and assume $m>1$,
then $N(\Oseen_I^n(k/k),X)=N(\Oseen_I,X)$ counts the primitive Pisot numbers in the real field $\sigma_1(k)$. Here 
the primitivity 
is induced by the choice of the set $I$.
The non-primitive
Pisot numbers lie in a strict subfield of $\sigma_1(k)$, and so their number has order of magnitude at most $X^{m/2}$. 
Thus for the total number of Pisot numbers in $\sigma_1(k)$ of height no larger than $X$ we get
\begin{alignat*}1
\Bk X^m+O(X^{m-1}).
\end{alignat*}
Still with $K=k$, $\Oseen=\Oseen_k$, and $n=1$ we now take $I=\{1,\ldots,r_k+s_k\}$. Then we are counting 
the nonzero elements $\alpha\in \Oseen_k$ with $H(\alpha)=|Nm_{k/\IQ}(\alpha)|^{1/m}\leq X$. Their number is given by
\begin{alignat*}1
\sum_{i=0}^{q_k}(-1)^{q_k}\Bk X^m\frac{(-\log X^m)^i}{i!}
+O(X^{m-1}(\Log X)^{q_k}).
\end{alignat*}
Next note that 
\begin{alignat}1\label{Oseenunion}
\Oseen^n(K/k)=\cup_I \Oseen_I^n(K/k), 
\end{alignat}
taken over all non-empty subsets of $I$ of $\{1,\ldots,r_K+s_K\}$, is
a disjoint union. 
Thus we may sum the estimate in Theorem \ref{mainthm2} over all non-empty sets $I$ to get estimates for the counting function of
$\Oseen^n(K/k)$. We even get a geometric interpretation of the main terms. The highest order main term comes from the points in $\Oseen_I^n(K/k)$ with maximal 
$I$, i.e., points satisfying $|\sigma_i(\valpha)|_{\infty}\geq 1$ for all $i$.
For the second order main term there is a negative contribution from $\Oseen_I^n(K/k)$ with maximal $I$ and a positive contribution for
each $\Oseen_I^n(K/k)$ with $|I|=r_K+s_K-1$, and so forth. \\
 
Let $\ST=\cup_I \S_I(T)$, where this time $I$ runs over all subsets of $\{1,\ldots,r_K+s_K\}$, and again this is a disjoint union.
In Section \ref{SectionVolComp} we will show that for $X\geq 1$
\begin{alignat*}1
\Vol \SXd=(2^{r_K}\pi^{s_K})^nX^{dn}\sum_{i=0}^{q_K}\frac{(\log X^{dn})^i}{i!}{q_K\choose i}=(2^{r_K}\pi^{s_K})^nX^{dn}L_{q_K}(-\log X^{dn}).
\end{alignat*}
As $\Oseen_{\emptyset}^n(K/k)=\emptyset$ we see that the union in (\ref{Oseenunion}) taken over all subsets remains equals $\Oseen^n(K/k)$.
In Section \ref{Corollary2} we show that Theorem \ref{mainthm2} remains valid for $I=\emptyset$, provided $(\Log X)^{\q}$
in the error term is replaced by $1$.
From this and Theorem \ref{mainthm2} we may deduce the following result.
\begin{corollary}\label{cor2}
Suppose $X\geq 1$ and either $n>1$ or $K=k$. Then
\begin{alignat*}1
\left|N(\Oseen^n(K/k),X)-\frac{2^{s_Kn}\Vol \SXd}{(\sqrt{|\Delta_K|}[\Oseen_K:\Oseen])^n}\right|
\leq c_3\sum_{g\in G(K/k)}\frac{X^{dn-1}(\Log X)^{q_K}}{\eO^{dn-1}\delta_g(K/k)^{\mug}},
\end{alignat*}
where $c_3=c_3(n,d)$ is a positive constant depending only on $n$ and $d$.
\end{corollary}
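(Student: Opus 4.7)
The plan is to reduce the corollary to Theorem \ref{mainthm2} via the disjoint decomposition
\begin{alignat*}1
\Oseen^n(K/k)=\bigsqcup_{I\subseteq\{1,\ldots,r_K+s_K\}}\Oseen_I^n(K/k),
\end{alignat*}
noted in (\ref{Oseenunion}). Since $\Oseen_{\emptyset}^n(K/k)=\emptyset$, including $I=\emptyset$ in this union does not change the set counted. The analogous volume identity $\Vol\SXd=\sum_I\Vol\S_I(X^d)$, summed over all subsets $I$ (including $\emptyset$), is also a disjoint decomposition.

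First, I would write $N(\Oseen^n(K/k),X)=\sum_IN(\Oseen_I^n(K/k),X)$, the sum taken over all subsets $I$. To each non-empty $I$ I apply Theorem \ref{mainthm2}, and to $I=\emptyset$ I apply the variant of Theorem \ref{mainthm2} (established in Section \ref{Corollary2}) with the factor $(\Log X)^{\q}$ in the error term replaced by $1$. This yields, for every $I$,
\begin{alignat*}1
\left|N(\Oseen_I^n(K/k),X)-\frac{2^{s_Kn}\Vol\S_I(X^d)}{(\sqrt{|\Delta_K|}[\Oseen_K:\Oseen])^n}\right|\leq c_2\sum_{g\in G(K/k)}\frac{X^{dn-1}(\Log X)^{\max\{|I|-1,0\}}}{\eO^{dn-1}\delta_g(K/k)^{\mug}}.
\end{alignat*}

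Next I apply the triangle inequality over all $2^{r_K+s_K}$ subsets $I$. The main terms telescope, by the volume additivity, to $2^{s_Kn}\Vol\SXd/(\sqrt{|\Delta_K|}[\Oseen_K:\Oseen])^n$, matching the corollary's main term. For the error terms, since $|I|-1\leq r_K+s_K-1=q_K$ and $\Log X\geq 1$, each contribution is bounded by $(\Log X)^{q_K}$ times the $g$-sum. Collecting the $2^{r_K+s_K}\leq 2^d$ subsets gives a total error bounded by $2^d\,c_2(n,d)$ times the claimed right-hand side, so the constant $c_3=c_3(n,d)$ may be taken as $2^d c_2$.

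The serious content has already been absorbed into Theorem \ref{mainthm2}; the only genuine work here is the $I=\emptyset$ case, where the counting function is trivially zero but the target volume $\Vol\S_{\emptyset}(X^d)$ is a nonzero constant (arising from the $-1$ in the formula for $\Vol\S_I(X^d)$), so one must verify that this constant is correctly cancelled by a uniform remainder of size $O(1)$. That verification is precisely the deferred argument in Section \ref{Corollary2}, and I expect it to be the only non-mechanical step; once it is in place, the corollary follows by the summation above.
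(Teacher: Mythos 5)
Your proposal is correct and follows essentially the same route as the paper: decompose $\Oseen^n(K/k)$ over all subsets $I$, apply Theorem \ref{mainthm2} to each non-empty $I$, bound $(\Log X)^{|I|-1}\leq(\Log X)^{q_K}$, and treat $I=\emptyset$ separately. You have also correctly isolated the only substantive point, namely that the nonzero constant $2^{s_Kn}\Vol \Sempty(X^d)/(\sqrt{|\Delta_K|}[\Oseen_K:\Oseen])^n$ must itself be absorbed into the error term; this is precisely the lemma the paper proves in Section \ref{Corollary2}, via $\sqrt{|\Delta_K|}[\Oseen_K:\Oseen]\gg\lambda_1\cdots\lambda_d\geq\lambda_1^{l-1}\lambda_l^{d-l+1}$ together with Lemmas \ref{minest1}, \ref{lemma2.7} and \ref{lemma2.6}.
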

Note that here, opposed to in Theorem \ref{mainthm2}, all main terms are positive.
Let us briefly explain the strategy of the proof of Theorem \ref{mainthm1}.
To this end we define the set of
``non-projectively primitive'' points in $\Oseen_K^n$
\begin{alignat*}1
\Oseen_{npp}^n(K/k)=\{\valpha\in \Oseen_K^n\backslash \Oseen_K^n(K/k); k(\valpha)=K\}.
\end{alignat*}
Now any $\valpha$ in $\Oseen_k(e,n)$ lies either in $\Oseen_K^n(K/k)$ or in $\Oseen_{npp}^n(K/k)$, with $K=k(\valpha)\in \coll_e(k)$.
Hence we have the following disjoint union
\begin{alignat*}1
\Oseen_k(e,n)=\bigcup_{\coll_e(k)}\Oseen_K^n(K/k)\cup \Oseen_{npp}^n(K/k).
\end{alignat*}
Therefore, we just have to sum $N(\Oseen_K^n(K/k),X)$ and $N(\Oseen_{npp}^n(K/k),X)$ over all $K$ in $\coll_e(k)$.
And indeed, we will show that the sum over all main terms as well as the sum over all error terms of $N(\Oseen_K^n(K/k),X)$ converges,
provided $n>e+\Cem$, while the sum over $N(\Oseen_{npp}^n(K/k),X)$ has smaller order of magnitude. 

It now is obvious that a crucially important feature of Corollary \ref{cor2} (and so of Theorem \ref{mainthm2}) 
is the good dependence of the error term on the extension $K/k$; note that by Northcott's Theorem
$\delta_g(K/k)^{-\mug}$ tends to zero as $K$ runs over the subset $\coll_e^{(g)}(k)$ of those $K\in \coll_e(k)$ with $g\in G(K/k)$. To compare with 
the discriminant we can apply 
a well-known inequality of Silverman \cite[Theorem 2]{9} to get $\delta_g(K/k)\geq c_k |\Delta_K|^{1/(2me(e-1))}$ for some positive constant $c_k$.

Unfortunately, bounding the number of extensions $K/k$ of fixed degree $e$ and bounded discriminant is a 
difficult problem, satisfactorily solved only for $e\leq 5$, thanks to the deep work of Datskowsky and Wright \cite{82}, and Bhargava \cite{Bhargava05, Bhargava10}.
We surmount this impasse by deviating from the standard route and working with the new invariant
$\delta_g(K/k)$ instead of the classical discriminant.
As it turns out we have almost sharp bounds for
the number of fields  $K\in\coll_e^{(g)}(k)$ with  $\delta_g(K/k)\leq T$, opposed to the case when we enumerate by the discriminant. Furthermore,
as larger $g$ gets, which means as larger the error terms get, the better our upper bounds for the number of $K\in\coll_e^{(g)}(k)$ with  $\delta_g(K/k)\leq T$
become. These observations have already been used in \cite{art2}.

Our method leads also to asymptotics for more specific sets, e.g., points $\valpha$ of degree $d$ whose coordinates are primitive Pisot numbers
of $\IQ(\valpha)$, provided $n>d+\Cem+1$. Here the ``$+1$'' is required to exclude the points with some coordinates equal zero.\\

The special case $K=k$ in Corollary \ref{cor2} yields a generalization of (\ref{app1}) (to arbitrary submodules of $\Oseen_k$
of full rank) with a more precise error term.
We have
\begin{alignat}1\label{app2}
\left|N(\Oseen^n\backslash\{\0\},X)-\frac{2^{s_kn}\Vol \SXm}{(\sqrt{|\Delta_k|}[\Oseen_k:\Oseen])^n}\right|
\leq c_3(n,m)\left(\frac{X}{\eO}\right)^{mn-1}(\Log X)^{q_k}.
\end{alignat}
Now let us choose $\Oseen=\A$ for a nonzero ideal $\A$. Then we have $\eO=\N(\A)^{1/m}$.
This allows one to carry out a M\"obius inversion to count $\valpha \in \A^n$ satisfying another type of primitivity, namely
$\alpha_1\Oseen_k+\cdots+\alpha_n\Oseen_k=\A$. Here we need $n\geq 2$ to get for the number of such $\valpha$
\begin{alignat*}1
\frac{2^{s_kn}\Vol \SXm}{\zeta_k(n)(\sqrt{|\Delta_k|}\N(\A))^n}
+O\left(\frac{X^{mn-1}(\Log X)^{\overline{q}}}{\N(\A)^{n-1/m}}\right),
\end{alignat*}
where $\overline{q}=q_k$ if $(n,m)\neq (2,1)$ and $\overline{q}=1$ if $(n,m)=(2,1)$.

\section{Techniques and plan of the paper}\label{organisation}

The paper is organized as follows. We start with a section on elementary counting principles. 
Here we recall and provide some basic results on counting lattice points.
Then in Section \ref{toolsinvprin} we state a precise estimate (Theorem \ref{mainthm}) of the quantity 
$|\Lambda\cap \S_I(T)|$, 
for lattices $\Lambda$ that have a bounded orbit under the flow induced by 
a certain subgroup $\mathcal{T}$ of the diagonal endomorphisms with determinant $1$.


In Section \ref{1subsec4} we introduce some notation and state some simple properties of 
the sets $\S_I(T)$ and $\ST$ which are required for the proof of Theorem \ref{mainthm}.


Skriganov \cite{Skriganov1, Skriganov2} obtained very good estimates for the number of lattice points inside aligned boxes, provided the lattice orbit under the above mentioned flow is bounded. However, our set $\S_I(T)$ has hyperbolic spikes and is far away from box-shaped. To overcome this hurdle we adapt a geometric partition method that goes back to Schmidt \cite{14}, and combine it with tools from dynamics on homogeneous spaces. 
An extensions of Schmidt's partition method is applied in Section \ref{decomptransSFT} and Section \ref{furthertrans}.
To apply the simple counting principles we still have to check some technical conditions such as the Lipschitz parameterizability of the boundary,
and this is done in Section \ref{SectionLippara}. 
In Section \ref{proofmainthm} we are finally in position to 
apply the elementary counting principles, and we can conclude the proof of Theorem \ref{mainthm}.\\ 

The most important aspect of Theorem \ref{mainthm2} is the good error term, in particular, with respect to the extension $K/k$.
This particular feature imposes serious additional challenges.
Instead of the boundedness of the orbit of  $\Lambda$ under the flow of $\mathcal{T}$
we have to prove that the orbit of $\Lambda$ (scaled to have determinant $1$) lies in a certain subset of the space of lattices $SL_{dn}(\IR)/SL_{dn}(\IZ)$ which is defined in terms of the higher successive minima and involves a critical successive minimum $\lambda_l$. To show that this condition 
implies the desired error term we need to utilize the machinery developed in \cite{art1}. However, the latter can only be applied to the set $\Oseen_K^n(K/k)$ of projectively primitive points, and
this is exactly why we have to restrict the counting in Theorem \ref{mainthm2} to these points. Thus, to prove Theorem \ref{mainthm1}
we have to deal with the set $\Oseen_{npp}^n(K/k)$ separately.
In Section \ref{estsuccmin} we show that the orbits of the lattices coming from embeddings of $\Oseen^n$  under the flow of $\mathcal{T}$ are bounded, and satisfy the refined conditions involving the 
higher successive minima  as well. 
The entire Section \ref{estsuccmin} is heavily based on \cite[Section 9]{art1}.
In Section \ref{nonprimpts} we prove an upper bound for the number of lattice points that are
not projectively primitive. With this upper bound we are ready in Section \ref{estproprimpts} to prove a precise asymptotic estimate for 
the number of
projectively primitive lattice points for all components that arise from the partition method. 
Section \ref{Proof of Theorem2} finishes the proof 
of Theorem \ref{mainthm2}. \\

Corollary \ref{cor2} is essentially an immediate consequence of Theorem \ref{mainthm2}. However, the present statement requires an analogue of 
Theorem \ref{mainthm2} in the case $\q=-1$.
The latter is stated an proved in Section \ref{Corollary2}. 
The volumes of the sets $\S_I(T)$ and $\ST$ are computed in Section \ref{SectionVolComp}. 
In Section \ref{nonproprimpts} we prove that the sum over $N(\Oseen_{npp}^n(K/k),X)$ taken over all fields $K\in \coll_e(k)$ is covered by the error term 
in Theorem \ref{mainthm1}.
Finally, Section \ref{proofmainthm1} is devoted to the proof of Theorem \ref{mainthm1}.\\

We will use Vinogradov's notation $\ll$. The implied constants depend only on $n$, $m$, $e$ and $d$. 
Throughout this article $T$ and $X$ denote real numbers $\geq 1$.

\section{General counting principles}\label{countingprinciples}

For a vector $\vx$ in $\IR^\Da$ we write $|\vx|$ 
for the Euclidean length of $\vx$. The closed Euclidean ball centered at $\vx$ with radius $r$
will be denoted by $B_{\vx}(r)$.
Let $\Lambda$ be a lattice of rank $\Da$ in $\IR^\Da$ then we define the \it successive minima 
\rm $\lambda_1(\Lambda),...,\lambda_\Da(\Lambda)$ of $\Lambda$ as the successive minima in the sense of Minkowski with respect
to the unit ball. That is
\begin{alignat*}3
\lambda_i=\inf \{\lambda;\, B_0(\lambda)\cap \Lambda
\text{ contains $i$ linearly independent vectors}\}.
\end{alignat*}
\begin{definition}
Let $M$ and $\Da$ be positive integers, and let $L$ be a non-negative real.
We say that a set $Z$ is in Lip$(\Da,M,L)$ if 
$Z$ is a subset of $\IR^\Da$, and 
if there are $M$ maps 
$\rho_1,\ldots,\rho_M:[0,1]^{\Da-1}\longrightarrow \IR^\Da$
satisfying a Lipschitz condition
\begin{alignat*}3
|\rho_i(\vx)-\rho_i(\vy)|\leq L|\vx-\vy| \text{ for } \vx,\vy \in [0,1]^{\Da-1}, i=1,\ldots,M 
\end{alignat*}
such that $Z$ is covered by the images
of the maps $\rho_i$. For $\Da=1$ this is to be interpreted as the finiteness of the set $Z$,
and the maps $\rho_i$ are considered points in $\IR^\Da$ such that $Z\subset \{\rho_i; 1\leq i\leq M\}$.
\end{definition}
We will apply the following counting result from \cite[Theorem 5.4]{art1}.
\begin{theorem}\label{TWi1}
Let $\Lambda$ be a lattice in $\IR^\Da$
with successive minima $\lambda_1,\ldots,\lambda_\Da$.
Let $Z$ be a bounded set in $\IR^\Da$ such that
the boundary $\partial Z$ of $Z$ is in Lip$(\Da,M,L)$.
Then $Z$ is measurable, and, moreover,
\begin{alignat*}3
\left||Z\cap\Lambda|-\frac{\Vol Z}{\det \Lambda}\right|
\leq c_4(\Da)M\max_{0\leq i<\Da}\frac{L^i}{\lambda_1\cdots\lambda_i}.
\end{alignat*}
For $i=0$ the expression in the maximum is to be understood
as $1$. Furthermore, one can choose $c_4(\Da)=\Da^{3\Da^2/2}$.
\end{theorem}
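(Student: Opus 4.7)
The plan is to reduce to counting $\IZ^\Da$-points in a transformed set. Pick a Minkowski-reduced basis $v_1,\ldots,v_\Da$ of $\Lambda$, let $A=(v_1\mid\cdots\mid v_\Da)$ so that $\Lambda=A\IZ^\Da$, and set $Z'=A^{-1}Z$. The identities $|Z\cap\Lambda|=|Z'\cap\IZ^\Da|$ and $\Vol Z/\det\Lambda=\Vol Z'$ reduce matters to bounding $\bigl||Z'\cap\IZ^\Da|-\Vol Z'\bigr|$, and this quantity is at most the number $N(\partial Z')$ of closed unit integer cubes meeting $\partial Z'$: unit cubes entirely inside or entirely outside $Z'$ contribute equally to both quantities, so the discrepancy is absorbed by the boundary cubes.

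The next step transfers the Lipschitz data of $\partial Z$ through $A^{-1}$ with careful bookkeeping of the resulting anisotropy. If $\rho_j:[0,1]^{\Da-1}\to\IR^\Da$ is one of the $M$ Lipschitz maps of constant $L$ covering $\partial Z$, then $A^{-1}\rho_j$ covers the corresponding piece of $\partial Z'$. The $i$-th coordinate of $A^{-1}\rho_j$ is Lipschitz with constant $\|r_i\|\cdot L$, where $r_i$ is the $i$-th row of $A^{-1}$, i.e., the dual basis vector $v_i^*$ to $v_i$ inside $\IR^\Da$. A standard property of Minkowski-reduced bases is $|v_i^*|\ll_\Da 1/\lambda_i$, which follows from $|v_i|\ll_\Da \lambda_i$ together with the fact that Minkowski reduction controls the angles between the $v_i$. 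Consequently the $i$-th coordinate function of $A^{-1}\rho_j$ has Lipschitz constant at most $L_i:=c_\Da L/\lambda_i$.

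The crux is then an anisotropic box count: one bounds the number of unit integer cubes of $\IR^\Da$ meeting the image of a map $[0,1]^{\Da-1}\to\IR^\Da$ whose $i$-th coordinate is $L_i$-Lipschitz. Reordering so that $\lambda_1\leq\cdots\leq\lambda_\Da$, hence $L_1\geq\cdots\geq L_\Da$, subdivide $[0,1]^{\Da-1}$ into axis-parallel sub-boxes with side lengths adapted to the $L_j$; each sub-box maps into a rectangular box in $\IR^\Da$ whose side lengths in direction $i$ are $\ll_\Da L_i$ times the diameter of the sub-box, and therefore has bounded integer-cube complexity. Balancing the subdivision so that $\Da-1$ coordinate directions are ``cancelled'' while one is ``left out'' gives, after optimising over which coordinate is left out, the bound
\[
N\bigl(A^{-1}\rho_j([0,1]^{\Da-1})\bigr)\ll_\Da \max_{0\leq i<\Da}\frac{L^i}{\lambda_1\cdots\lambda_i}.
\]
Summing over $j=1,\ldots,M$ then completes the proof.

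The main technical obstacle is this anisotropic box count: one must carefully pair the $\Da-1$ subdivision directions in the domain with the $\Da$ coordinate directions in the codomain so that the optimised estimate telescopes into the clean maximum over $i$, rather than producing a weaker product of Lipschitz constants. The base case $\Da=1$, where $\partial Z$ consists of at most $M$ points and $Z$ is a finite union of at most $M/2$ intervals, is elementary and serves as a sanity check. The enormous dependence $c_4(\Da)=\Da^{3\Da^2/2}$ emerges from compounding the Minkowski reduction constants, the dual-basis estimates, and the $\Da$-fold reordering and subdivision choices in the box-counting step.
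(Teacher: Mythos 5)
The paper does not prove this statement at all --- it is quoted verbatim from \cite[Theorem 5.4]{art1} --- and your outline is a correct reconstruction of essentially the argument used there: pass to the coordinates of a reduced basis so that $\partial Z'$ is covered by $M$ maps whose $i$-th coordinate is Lipschitz with constant $\ll_{\Da} L/\lambda_i$ (your bound $|v_i^*|\ll_{\Da}1/\lambda_i$ is correct and follows cleanly from $|v_i^*|=\Vol(v_j:j\neq i)/\det\Lambda\leq \prod_{j\neq i}|v_j|/\det\Lambda$ together with $|v_j|\ll_{\Da}\lambda_j$ and Minkowski's second theorem), then count unit cubes meeting these images. Two small remarks: the measurability assertion, which is part of the statement, should be dispatched (immediate, since $\partial Z$ is a finite union of Lipschitz images of $[0,1]^{\Da-1}$ and hence has measure zero); and the ``anisotropic pairing of domain and codomain directions'' you identify as the crux is not actually needed --- the coordinate functions are jointly Lipschitz in all domain variables, so a \emph{uniform} subdivision of $[0,1]^{\Da-1}$ into cubes of side $1/k$ with $k=\lceil c_{\Da}L/\lambda_{\Da-1}\rceil$ already gives the count $\ll k^{\Da-1}\prod_{i=1}^{\Da}\bigl(1+c_{\Da}L/(k\lambda_i)\bigr)$, in which the factors $i=\Da-1,\Da$ are $O(1)$ and the remaining ones telescope to $\ll_{\Da}\max_{0\leq i<\Da}L^i/(\lambda_1\cdots\lambda_i)$ (with the case $k=1$ handled by keeping only the factors with $L/\lambda_i\geq1$).
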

If $\Lambda$ is a lattice in $\IR^\Da$ and $a$ is an integer with $1\leq a\leq \Da$ then we put
\begin{alignat}3\label{Lambda_a}
\Lambda(a)=\{\vx\in\Lambda;|\vx|\geq \lambda_a\}.
\end{alignat}

\begin{corollary}\label{TWi1cor}
Let $\Lambda$ be a lattice in $\IR^\Da$
with successive minima $\lambda_1,\ldots,\lambda_\Da$.
Let $Z$ be a bounded set in $\IR^\Da$ such that
the boundary $\partial Z$ of $Z$ is in Lip$(\Da,M,L)$, and $Z\subset B_0(\kappa L)$ with $\kappa\geq 1$.
Then $Z$ is measurable and we have
\begin{alignat*}3
\left||Z\cap\Lambda(a)|-\frac{\Vol Z}{\det \Lambda}\right| \leq c_5(\Da)M\frac{(\kappa L)^{\Da-1}}{{\lambda_1}^{a-1}{\lambda_a}^{\Da-a}}.
\end{alignat*}
One can choose $c_5(\Da)=c_4(\Da)(2\pi\Da)^\Da$.
\end{corollary}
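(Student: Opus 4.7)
The plan is to use the decomposition
\[ |Z \cap \Lambda(a)| = |Z \cap \Lambda| - |Z \cap (\Lambda \setminus \Lambda(a))|, \]
apply Theorem \ref{TWi1} to the first term, and bound the second by an elementary lattice-point count. The argument splits according to the size of $\lambda_a$ relative to $\kappa L$.

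Suppose first that $\lambda_a > \kappa L$. Then every $v \in Z \cap \Lambda$ satisfies $|v| \leq \kappa L < \lambda_a$, so $|Z \cap \Lambda(a)| = 0$, and the statement reduces to an upper bound on $\Vol Z / \det \Lambda$. Using $\Vol Z \leq \Vol B_0(\kappa L) \ll_\Da (\kappa L)^\Da$, Minkowski's second theorem in the form $\det \Lambda \gg_\Da \prod_i \lambda_i$, and the telescoping estimate $\prod_{i=1}^{\Da}\lambda_i \geq \lambda_1^{a-1}\lambda_a^{\Da-a+1}$ (using $\lambda_i \geq \lambda_1$ for $i < a$ and $\lambda_i \geq \lambda_a$ for $i \geq a$), the conclusion follows from $\kappa L < \lambda_a$.

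Suppose now that $\lambda_a \leq \kappa L$. Theorem \ref{TWi1} gives
\[ \left| |Z \cap \Lambda| - \frac{\Vol Z}{\det \Lambda} \right| \leq c_4(\Da)\, M \max_{0 \leq i < \Da}\frac{L^i}{\lambda_1 \cdots \lambda_i}, \]
and I would check that each term in the maximum is bounded, up to a dimension constant, by $(\kappa L)^{\Da-1}/(\lambda_1^{a-1}\lambda_a^{\Da-a})$. For $i \leq a-1$ use $\lambda_1 \cdots \lambda_i \geq \lambda_1^i$; for $i \geq a$ use $\lambda_1 \cdots \lambda_i \geq \lambda_1^{a-1}\lambda_a^{i-a+1}$. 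In each case the resulting inequality reduces by elementary manipulation to $\lambda_a \leq \kappa L$ together with $\kappa \geq 1$. For the second piece, the definition of the successive minima implies that every lattice point of norm less than $\lambda_a$ lies in a subspace $V$ of dimension at most $a-1$, and the standard estimate for lattice points in a ball (a consequence of Minkowski's second theorem) yields
\[ |Z \cap (\Lambda \setminus \Lambda(a))| \;\leq\; |\Lambda \cap V \cap B_0(\lambda_a)| \;\ll_\Da\; \left(\frac{\lambda_a}{\lambda_1}\right)^{a-1}. \]
Since $\lambda_a \leq \kappa L$ one has $\lambda_a^{\Da-1} \leq (\kappa L)^{\Da-1}$, which shows that this count is itself at most a dimension constant times $(\kappa L)^{\Da-1}/(\lambda_1^{a-1}\lambda_a^{\Da-a})$.

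Combining the two pieces yields the desired bound. The main obstacle is purely bookkeeping: one has to track the dimension-dependent factors that accumulate through the two invocations of Minkowski's second theorem (once for $\det\Lambda$, once for the ball-counting estimate in the sublattice) and verify that they fit inside the explicit constant $c_5(\Da) = c_4(\Da)(2\pi\Da)^\Da$; the rest of the argument is an elementary exercise in comparing $L$, $\kappa L$, and the successive minima.
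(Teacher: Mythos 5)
Your proof is correct and follows essentially the same route as the paper: the same decomposition $|Z\cap\Lambda(a)|=|Z\cap\Lambda|-|Z\cap(\Lambda\setminus\Lambda(a))|$, the same case split according to whether $\kappa L\geq \lambda_a$, Theorem \ref{TWi1} for the main term, and Minkowski's second theorem with the telescoping bound $\lambda_1\cdots\lambda_\Da\geq \lambda_1^{a-1}\lambda_a^{\Da-a+1}$ in the degenerate case. The only (harmless) divergence is in counting the lattice points of norm $<\lambda_a$: the paper applies Theorem \ref{TWi1} once more to $B_0(\lambda_a)$ via a Lipschitz parameterization of the sphere, whereas you use the observation that these points span a subspace of dimension at most $a-1$ together with a standard packing bound; both give $\ll_{\Da}(\kappa L)^{\Da-1}/({\lambda_1}^{a-1}{\lambda_a}^{\Da-a})$, so your version goes through.
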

\begin{proof}
The measurability comes directly from Theorem \ref{TWi1}.
First suppose $\kappa L\geq \lambda_a$.
By the triangle inequality we get
\begin{alignat*}3
\left||Z\cap\Lambda(a)|-\frac{\Vol Z}{\det \Lambda}\right| \leq 
\left||Z\cap\Lambda|-\frac{\Vol Z}{\det \Lambda}\right|+|B_0(\lambda_a)\cap\Lambda|.
\end{alignat*}
We apply Theorem \ref{TWi1}.
Since $\kappa\geq 1$, we have
\begin{alignat*}1
\left||Z\cap\Lambda|-\frac{\Vol Z}{\det \Lambda}\right|\leq c_4(\Da)M\max_{0\leq i<\Da}\frac{L^i}{\lambda_1\cdots\lambda_i} 
\leq c_4(\Da)M\frac{(\kappa L)^{\Da-1}}{{\lambda_1}^{a-1}{\lambda_a}^{\Da-a}}.
\end{alignat*}
To estimate $|B_0(\lambda_a)\cap\Lambda|$ we observe that $\partial B_0(\lambda_a)$ lies in Lip$(\Da,1,2\pi\Da\lambda_a)$.
Applying Theorem \ref{TWi1} gives
\begin{alignat*}3
|B_0(\lambda_a)\cap\Lambda|\leq 
\frac{\Vol B_0(\lambda_a)}{\det \Lambda}+c_4(\Da)\max_{0\leq i<\Da}\frac{(2\pi\Da\lambda_a)^i}{\lambda_1\cdots\lambda_i}. 
\end{alignat*}
Using Minkowski's second Theorem we get 
\begin{alignat*}3
\frac{\Vol B_0(\lambda_a)}{\det \Lambda}\leq 2^\Da\frac{\lambda_a^\Da}{\lambda_1\cdots\lambda_\Da}\leq 
2^\Da\frac{\lambda_a^{\Da-1}}{\lambda_1^{a-1}\lambda_a^{\Da-a}}\leq 2^D \frac{(\kappa L)^{\Da-1}}{{\lambda_1}^{a-1}{\lambda_a}^{\Da-a}}.
\end{alignat*}
Moreover, 
\begin{alignat*}3
\max_{0\leq i<\Da}\frac{(2\pi\Da\lambda_a)^i}{\lambda_1\cdots\lambda_i} \leq 
(2\pi\Da)^{\Da-1}\frac{\lambda_a^{\Da-1}}{\lambda_1^{a-1}\lambda_a^{\Da-a}}
\leq (2\pi\Da)^{\Da-1} \frac{(\kappa L)^{\Da-1}}{{\lambda_1}^{a-1}{\lambda_a}^{\Da-a}}.
\end{alignat*}

Next suppose
$\kappa L<\lambda_a$.
Then, as $Z\subset B_0(\kappa L)$, we have $|Z\cap\Lambda(a)|=0$. Again, by Minkowski's second Theorem and by $Z\subset B_0(\kappa L)$ we get
\begin{alignat*}1
\frac{\Vol Z}{\det \Lambda}\leq \frac{(2\kappa L)^\Da}{\lambda_1\cdots \lambda_\Da}
\leq 2^\Da\left(\frac{\kappa L}{\lambda_1}\right)^{a-1}\left(\frac{\kappa L}{\lambda_a}\right)^{\Da-a+1}
\leq 2^\Da \frac{(\kappa L)^{\Da-1}}{{\lambda_1}^{a-1}{\lambda_a}^{\Da-a}}.
\end{alignat*}
This completes the proof.
\end{proof}

\section{Counting via flows and partition techniques}\label{toolsinvprin}
Let $r$ and $s$ be non-negative integers not both zero, and put $d=r+2s$ and $q=r+s-1$.
For $1\leq i\leq r+s$ we set $d_i=1$ if $i\leq r$ and $d_i=2$ otherwise. 
We write $\z_i=(z_{i1},\ldots,z_{in})$ for variables in $K_i^n$, where $K_i=\IR$ if $i\leq r$ and $K_i=\IC$ if $i>r$.
Moreover, we write
\begin{alignat*}1
|\z_i|_{\infty}&=\max\{|z_{i1}|,\ldots,|z_{in}|\},\\
|(1,\z_i)|_{\infty}&=\max\{1,|z_{i1}|,\ldots,|z_{in}|\}.
\end{alignat*}
For $T\geq 1$ we define the set
\begin{alignat*}1
\ST=\left\{(\z_1,\ldots,\z_{r+s})\in \prod_{i=1}^{r+s}K_i^n; \prod_{i=1}^{r+s}|(1,\z_i)|_{\infty}^{d_i}\leq T\right\}.
\end{alignat*}
For each subset $I\subset \{1,2,\ldots,r+s\}$ and $I^c=\{1,2,\ldots,r+s\}\backslash I$ we define 
\begin{alignat}1\label{defSI}
\S_I(T)=\{(\z_1,\ldots,\z_{r+s})\in \ST; &|\z_i|_{\infty}\geq 1 \text{ for } i\in I \text{ and }\\
\nonumber &|\z_i|_{\infty}<1 \text{ for } i\in I^c \}.
\end{alignat}
We put 
\begin{alignat*}1
d'=\sum_I d_i, 
\end{alignat*}
and 
\begin{alignat*}1
\q=|I|-1.
\end{alignat*}
Let $\mathcal{T}$ be the group of $\IR$-linear maps $\phi$ on $\prod_{i=1}^{r+s}{K_i^n}$ of the form 
\begin{alignat}1\label{phidef}
\phi(\z_1,\ldots,\z_{r+s})=(\xi_1 \z_1,\ldots,\xi_{r+s}\z_{r+s}) 
\end{alignat}
with positive real $\xi_i$ satisfying 
\begin{alignat}1\label{phidet}
\prod_{i=1}^{r+s}\xi_i^{d_i}=1,
\end{alignat}
so that $\det \phi=1.$
The following theorem is an important intermediate step.
\begin{theorem}\label{mainthm}
Suppose $\q=|I|-1\geq 0$.
Let $\Lambda$ be a lattice in the Euclidean space $\prod_{i=1}^{r+s}{K_i^n}$ and suppose there exist positive real numbers 
$\mun_1,\ldots,\mun_{nd}$ such that
$\lambda_p(\phi(\Lambda))\geq \mun_p$ for $1\leq p\leq nd$ and all $\phi \in \mathcal{T}$. Then,
for $T\geq 1$, one has 
\begin{alignat*}1
\left||\Lambda\cap \S_I(T)|-\frac{\Vol \S_I(T)}{\det \Lambda}\right|
&\leq c_6 (\Log T)^{\q}\max_{0\leq p<nd}\frac{T^{p/d}}{\mun_1\cdots\mun_p},\\
\left||\Lambda\cap \S_I(T)|-\frac{\Vol \S_I(T)}{\det \Lambda}\right|
&\leq c_7 (\Log T)^{\q}\frac{T^{n-1/d}}{\mun_1^{nd-1}},
\end{alignat*}
where $c_6=c_6(n,d)$ and $c_7=c_7(n,d)$ depend only on $n$ and $d$.
For $p=0$ the expression in the maximum is to be understood
as $1$. Moreover, if $T<(\mun_1/\ka)^d$ we have
\begin{alignat*}1
|\Lambda\cap \S_I(T)|=0,
\end{alignat*}
where $\ka=\sqrt{dn}\exp(\sqrt{q})$.
\end{theorem}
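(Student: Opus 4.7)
The proof exploits the flow $\mathcal{T}$ to rescale the hyperbolic set $\S_I(T)$ into pieces of bounded Euclidean diameter, then applies Theorem \ref{TWi1} to each. The underlying observation is that for any $\z \in \S_I(T)$ one can construct $\phi\in\mathcal{T}$ with $|\phi(\z)|_\infty \leq T^{1/d}$ in every block: for $i\in I$ set $\xi_i = T^{1/d}/|\z_i|_\infty$ (valid since $|\z_i|_\infty \geq 1$), and for $i\in I^c$ take the $\xi_i$ to share a common positive value chosen so that $\prod_i\xi_i^{d_i}=1$. Since $\det\phi = 1$, both $\det\Lambda$ and $\Vol\S_I(T)$ are preserved, and the hypothesis gives $\lambda_p(\phi(\Lambda))\geq \mun_p$ no matter which $\phi$ is used.

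A single $\phi$ cannot balance all of $\S_I(T)$ simultaneously, so I would apply a Schmidt-style geometric partition: for each $i\in I$ dyadically split the range $0\leq \log|\z_i|_\infty \leq \frac{1}{d_i}\log T$, producing, after using the product constraint to eliminate one free variable, a family of $O((\Log T)^{\q})$ pieces $\S_I^{J}(T)$. On each piece pick $\phi_J\in\mathcal{T}$ of the type described above; after checking that $\partial(\phi_J(\S_I^{J}(T)))$ lies in $\mathrm{Lip}(nd,O(1),O(T^{1/d}))$ with constants uniform in $J$ — the content of Section \ref{SectionLippara} — Theorem \ref{TWi1} applied to $\phi_J(\Lambda)$ yields per piece
\[
\left||\Lambda\cap \S_I^{J}(T)|-\frac{\Vol \S_I^{J}(T)}{\det\Lambda}\right|\ll\max_{0\leq p<nd}\frac{T^{p/d}}{\mun_1\cdots\mun_p}.
\]
Summing over the $O((\Log T)^{\q})$ pieces produces the first estimate. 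The second estimate follows by the crude bound $\mun_1\cdots\mun_p\geq \mun_1^p$, which turns the max over $p$ into a geometric progression attained at the endpoint $p=nd-1$, yielding $T^{(nd-1)/d}/\mun_1^{nd-1} = T^{n-1/d}/\mun_1^{nd-1}$.

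The vanishing statement is the clean endpoint of this construction: the balancing $\phi$ built above satisfies $|\phi(\z)|\leq \kappa T^{1/d}$ for every $\z\in\S_I(T)$, so if $T<(\mun_1/\kappa)^d$ then $|\phi(\z)|<\mun_1\leq \lambda_1(\phi(\Lambda))$, forcing $\phi(\z)=0$ and hence $\z=0$; but $\q\geq 0$ means $|I|\geq 1$, so $\mathbf{0}\notin \S_I(T)$. The factor $\exp(\sqrt{q})$ inside $\kappa$ absorbs the constants incurred when one replaces a pointwise-optimal $\phi$ by the common $\phi_J$ on each partition element, which introduces a bounded multiplicative overshoot in the Euclidean norm.

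The main obstacle is not the counting argument but the geometric preparation: one needs the partition together with the family $\{\phi_J\}$ arranged so that the transformed pieces have Lipschitz parameterizations bounded by $O(T^{1/d})$ with $O(1)$ maps, uniformly in $J$ and $T$. The boundaries combine hyperbolic surfaces $\prod|\z_i|_\infty^{d_i}=\text{const}$ with dyadic cut-surfaces $|\z_i|_\infty=2^{k/d_i}$, and without uniform control the summation over the $O((\Log T)^{\q})$ pieces would produce extra logarithmic factors and destroy the stated error term. This is exactly why the paper allocates the separate Sections \ref{decomptransSFT}, \ref{furthertrans}, and \ref{SectionLippara} to this preparation before assembling the final counting argument in Section \ref{proofmainthm}.
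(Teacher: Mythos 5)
Your outline reproduces the paper's argument: partition in logarithmic coordinates (the paper works on the hyperplane $\Sigma$ and cuts $\F=(\IR_{\geq 0}^{\q+1}-\vdelta\log T)\cap\Sigma$ into $O((\Log T)^{\q})$ unit cells, which is your dyadic splitting with one variable eliminated), apply a balancing automorphism $\psiTi=\psi\circ\Ti\in\mathcal{T}$ to each piece so that it lands in $B_0(\ka T^{1/d})$ with boundary in Lip$(dn,O(1),O(T^{1/d}))$ uniformly, count lattice points per piece, and sum; the vanishing statement is handled exactly as you describe.

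There is, however, one step that does not work as written: your derivation of the second inequality from the first. The bound $\mun_1\cdots\mun_p\geq\mun_1^p$ turns the maximum into $\max_{0\leq p<nd}(T^{1/d}/\mun_1)^p$, and this is attained at $p=nd-1$ only when $T^{1/d}\geq\mun_1$. In the regime $T^{1/d}<\mun_1$ the maximum equals $1$ (at $p=0$), which is \emph{larger} than the claimed $T^{n-1/d}/\mun_1^{nd-1}=(T^{1/d}/\mun_1)^{nd-1}$, so the second estimate is genuinely stronger than the first there and cannot be deduced from it. The paper closes this by applying Corollary \ref{TWi1cor} with $a=1$ rather than Theorem \ref{TWi1}: since $\0\notin\psiTi\SibF$ one has $|\psiTi\SibF\cap\psiTi\Lambda|=|\psiTi\SibF\cap(\psiTi\Lambda)(1)|$, and the corollary separately treats the case $\ka L<\lambda_1$ (where the intersection is empty and the volume term is controlled by Minkowski's second theorem) and the case $\ka L\geq\lambda_1$ (where your geometric-progression argument is valid). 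You need this extra ingredient — or an equivalent use of $\0\notin\psi\S_I(T)$, the containment in $B_0(\ka T^{1/d})$, and Minkowski's second theorem — to obtain the second inequality in full.
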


\section{Preliminaries}\label{1subsec4}
Unless explicitly mentioned otherwise (which will be the case only in Section \ref{Corollary2}) we always assume $I\neq \emptyset$.
Suppose $I=\{i_1,\ldots,i_p\}$ with $i_1<\cdots<i_p$ then we put $(\z_i)_I=(\z_{i_1},\ldots,\z_{i_p})$.
For subsets $\Ze \subset \prod_I{K_i^n}$ and $\Zz \subset \prod_{I^c}{K_i^n}$ we identify the Cartesian
product $\Ze\times \Zz$ with $\Ze$ if $I^c$ is empty. It is more convenient to 
group the coordinate vectors according to their maximum norm, and thus we redefine
\begin{alignat}3\label{SITcartprod}
\S_I(T)=&\left\{(\z_i)_I\in \prod_I{K_i^n};\prod_I |\z_i|_{\infty}^{d_i}\leq T,|\z_i|_{\infty}\geq 1 \text{ for } i\in I\right\}\\
\nonumber&\times \left\{({\z}_{i})_{I^c}\in \prod_{I^c}{K_i^n};
|{\z}_{i}|_{\infty}<1   \text{ for }i \in I^c\right\}.
\end{alignat}
As we study the cardinality $|\Lambda\cap \S_I(T)|$ we shall permute the coordinates 
of $\Lambda$ in the same manner, and we modify $\phi \in \mathcal{T}$ accordingly to act on $\prod_I{K_i^n}\times\prod_{I^c}{K_i^n}$. Of course, 
this leaves the volume $\Vol \S_I(T)$ and the values $\lambda_i(\phi(\Lambda))$ invariant.
Let $\Sigma$ be the hyperplane in $\IR^{\q+1}$
defined by $x_1+\cdots+x_{\q+1}=0$ and 
\begin{alignat*}1
\vdelta=(d_i/d')_I. 
\end{alignat*}
Let $F$ be a set in $\Sigma$ and put $F(T)$ for the vector sum
\begin{alignat}3
\label{vecsum0}
F(T)=F+\vdelta(-\infty,\log T].
\end{alignat}
The map $(\z_i)_I\longrightarrow (d_i\log|\z_i|_{\infty})_I$
sends $\prod_I{K_i^n}\backslash\{\0\}$ to $\IR^{\q+1}$.
Now we define 
\begin{alignat}3
\label{inkl1}
S_{F}(T)=\left\{(\z_i)_I\in \prod_I{{K_i^n}\backslash\{\0\}};(d_i\log|\z_i|_{\infty})_I \in F(T)\right\}.
\end{alignat}
Directly from the definition we get
\begin{alignat}3
\label{SFThomexp}
&S_{F}(T)=T^{1/d'}S_{F}(1).
\end{alignat}
Moreover, if $F$ lies in a ball centered at zero of radius $r_F$, then for any $(\z_i)_I\in S_{F}(T)$  
\begin{alignat}3
\label{SFnormbound}
|\z_i|_{\infty}\leq \exp(r_F)T^{1/d'} \quad (i\in I).
\end{alignat}
For non-negative reals $a_i$ ($i\in I$) let us write
\begin{alignat}3\label{Esets}
\E((a_i)_I)=\left\{(\z_i)_I\in \prod_I{{K_i^n}};|\z_i|_{\infty}\geq a_i \text{ for }i\in I\right\}.
\end{alignat}

\section{Partitioning and transforming $\S_I(T)$}\label{decomptransSFT}
In Section \ref{proofmainthm} we will prove that for $\q>0$ we have
\begin{alignat*}1
\S_I(T)=\left(S_{\F}(T)\cap \E((1)_I)\right)\times \{({\z}_{i})_{I^c};
|{\z}_{i}|_{\infty}<1   \text{ for }i \in I^c\} 
\end{alignat*}
for a certain $\F\subset \Sigma$.
In this section we focus on the first component $S_{\F}(T)\cap \E((1)_I)$ but we 
will allow arbitrary sets $F\subset \Sigma$.
Throughout this section we assume
\begin{alignat*}3
\q>0.
\end{alignat*}
Fix once and for all an orthonormal basis $e_1,\ldots,e_{\q}$ of $\Sigma\subset \IR^{\q+1}$.
For ${\bf{j}}=(j_1,\ldots,j_{\q})\in \IZ^{\q}$ we define the fundamental cell 
\begin{alignat*}3
\Ci=j_1e_1+[0,1)e_1+\cdots +j_{\q}e_{\q}+[0,1)e_{\q}.
\end{alignat*}
For $F\subset \Sigma$ we define
\begin{alignat*}3
\Di=\Ci\cap F.
\end{alignat*}
Let $\m_{F}$ be the set of those ${\i}$ that satisfy $\Di\neq \emptyset$.
Clearly,
\begin{alignat}3\label{decompositionF}
F=\bigcup_{\m_{F}}\Di,
\end{alignat}
and the latter is a disjoint union.
\begin{lemma}\label{mFbound}
Suppose $F$ is a subset of $\Sigma$ and $F\subset B_0(r_F)$ with $r_F\geq 1$. Then
\begin{alignat*}3
|\m_F|\ll {r_F}^{\q}.
\end{alignat*} 
\end{lemma}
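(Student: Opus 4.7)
The plan is a standard volume-packing argument inside the $\mathsf{q}$-dimensional hyperplane $\Sigma$. The cells $C_{\mathbf{j}}$, as $\mathbf{j}$ ranges over $\IZ^{\mathsf{q}}$, partition $\Sigma$ into half-open unit cubes (with respect to the fixed orthonormal basis $e_1,\ldots,e_{\mathsf{q}}$); in particular each $C_{\mathbf{j}}$ has $\mathsf{q}$-dimensional volume $1$ and diameter $\sqrt{\mathsf{q}}$. So first I would observe that if $\mathbf{j}\in\m_F$, then there exists a point of $F$ inside $C_{\mathbf{j}}$, and by the diameter bound every point of $C_{\mathbf{j}}$ lies within distance $\sqrt{\mathsf{q}}$ of that point of $F$, hence within the ball $B_0(r_F+\sqrt{\mathsf{q}})$ in $\Sigma$.

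Next, since the cells indexed by $\m_F$ are pairwise disjoint, the standard additivity of volume gives
\begin{equation*}
|\m_F|=\sum_{\mathbf{j}\in\m_F}\Vol(C_{\mathbf{j}})=\Vol\Bigl(\bigcup_{\mathbf{j}\in\m_F}C_{\mathbf{j}}\Bigr)\leq \Vol B_0(r_F+\sqrt{\mathsf{q}}),
\end{equation*}
where the volume and the ball are taken in the $\mathsf{q}$-dimensional Euclidean space $\Sigma$. A standard formula bounds the right-hand side by $c(r_F+\sqrt{\mathsf{q}})^{\mathsf{q}}$ for a constant $c$ depending only on $\mathsf{q}$.

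Finally, using the hypothesis $r_F\geq 1$, I would estimate $r_F+\sqrt{\mathsf{q}}\leq (1+\sqrt{\mathsf{q}})r_F$, which gives $|\m_F|\leq c(1+\sqrt{\mathsf{q}})^{\mathsf{q}}r_F^{\mathsf{q}}\ll r_F^{\mathsf{q}}$. The implied constant depends only on $\mathsf{q}=|I|-1\leq r_K+s_K-1$, which is bounded in terms of $d$, so this is legitimate under the convention that Vinogradov constants may depend on $n$ and $d$. There is no real obstacle here; the only thing to keep straight is that all volumes and balls live in the hyperplane $\Sigma$, not in the ambient $\IR^{\mathsf{q}+1}$, but the orthonormal basis makes this identification transparent.
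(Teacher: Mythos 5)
Your argument is correct. The one point worth double-checking is the containment step: if $\j\in\m_F$ then $\Di=\Ci\cap F\neq\emptyset$, so picking $x\in\Ci\cap F\subset B_0(r_F)$ and using $\diam \Ci=\sqrt{\q}$ indeed gives $\Ci\subset B_0(r_F+\sqrt{\q})$ inside $\Sigma$; combined with disjointness of the cells and $\Vol(\Ci)=1$, the volume comparison goes through, and $r_F\geq 1$ absorbs the $\sqrt{\q}$ into the constant. The route differs from the paper's, which skips volumes entirely: there one notes that $F$ lies in the cube $[-r_F,r_F]e_1+\cdots+[-r_F,r_F]e_{\q}$, and counts coordinate-wise that this cube meets at most $2\lceil r_F\rceil+1$ of the intervals $j_p+[0,1)$ in each of the $\q$ directions, hence at most $(2\lceil r_F\rceil+1)^{\q}$ cells in total. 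The paper's count is purely combinatorial and yields an explicit constant with no appeal to the ball-volume formula; your packing argument is marginally less explicit but more robust, since it would apply verbatim to any partition of $\Sigma$ into pieces of uniformly bounded diameter and volume bounded below, not just to cubical cells. Either way the implied constant depends only on $\q\leq d-1$, consistent with the paper's convention on Vinogradov constants.
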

\begin{proof}
Clearly, $F$ lies in the cube $[-r_F,r_F]e_1+\cdots +[-r_F,r_F]e_{\q}$ which has non-empty intersection with at most
$(2\lceil r_F \rceil+1)^{\q}$ fundamental cells $\Ci$ (here $\lceil r_F \rceil$ denotes the smallest integer not smaller 
than $r_F$). Since $r_F\geq 1$ the lemma follows.
\end{proof}
Now (\ref{decompositionF}) leads to 
\begin{alignat}3
\label{partSF1}
S_{F}(T)=\bigcup_{\m_{F}}S_{\Di}(T),
\end{alignat}
which again is a disjoint union.
For each vector
${\bf j}=(j_1,\ldots,j_{\q})\in \IZ^{\q}$ we define a translation $\tr$
on $\IR^{\q+1}$ by 
\begin{alignat*}3
\tr(x)=x-\sum_{p=1}^{\q}j_p e_p=x-u(\i),
\end{alignat*}
where $u(\i)=(u_{i})_I=\sum_{p=1}^{\q}j_p e_p$.
This translation sends $\Sigma$ to $\Sigma$ and
$\Ci$ to $\C0$. For $i\in I$ set $\gamma_i=\gamma_i(\i)=\exp(-u_i/d_i)$, so that $\gamma_i>0$, 
\begin{alignat}3
\label{prodgammai}
\prod_I\gamma_i^{d_i}=1,
\end{alignat}
and 
\begin{alignat*}3
(d_i\log|\gamma_i{\z}_i|_{\infty})_I=\tr((d_i\log|{\z}_i|_{\infty})_I).
\end{alignat*}
Hence, for the automorphism $\ti$ of $\prod_I K_i^n$ defined by
\begin{alignat*}3
\ti({\z}_i)_I=(\gamma_i{\z}_i)_I,
\end{alignat*}
we have
\begin{alignat*}3
\ti S_{F}(T)=S_{\tr(F)}(T).
\end{alignat*}
As $\tr(\Di)=\tr(F)\cap \C0$
we get 
\begin{alignat}3\label{tiSFT}
\ti S_{\Di}(T)=S_{\tr(F)\cap \C0}(T).
\end{alignat}
Moreover, we have
\begin{alignat*}3
\ti \E(1)_I=\left\{({\z}_i)_I\in \prod_I{K_i^n};|\z_i|_{\infty}\geq \gamma_i \text{ for }i\in I\right\}=\E((\gamma_{i})_I).
\end{alignat*}
As $\C0\subset B_0(\sqrt{\q})$ we get from (\ref{SFnormbound}) that
for any $(\z_i)_I\in S_{\C0}(T)$ 
\begin{alignat}3\label{SC0normbound}
|\z_i|_{\infty}\leq \exp(\sqrt{\q})T^{1/d'} \quad (i\in I).
\end{alignat}
We extend $\ti$ to a diagonal endomorphism $\Ti$ on $\prod_I K_i^n\times \prod_{I^c} K_i^n$ by setting 
\begin{alignat}1\label{defTi}
\Ti(((\z_i)_I,(\z_i)_{I^c}))=(\ti(\z_i)_I,(\z_i)_{I^c})=((\gamma_i\z_i)_I,(\z_i)_{I^c}).
\end{alignat}
Next we put
\begin{alignat}3\label{defSiF}
\SiF=\left(S_{\Di}(T)\cap \E((1)_I))\right)\times \{({\z}_{i})_{I^c};
|{\z}_{i}|_{\infty}<1   \text{ for }i \in I^c\}.
\end{alignat}

\section{Further transforming $\S_I(T)$}\label{furthertrans}
We define a map
\begin{alignat}3\label{defpsi}
\psi:\prod_I K_i^n\times \prod_{I^c} K_i^n\longrightarrow \prod_I K_i^n\times \prod_{I^c} K_i^n
\end{alignat}
by 
\begin{alignat*}3
\psi(((\z_i)_I,(\z_i)_{I^c}))=(\psi_1((\z_i)_I),\psi_2((\z_i)_{I^c})),
\end{alignat*}
where
\begin{alignat*}3
\psi_1((\z_i)_I)&=((T^{-1/d'+1/d} \z_i)_I),\\
\psi_2((\z_i)_{I^c})&=((T^{1/d} \z_i)_{I^c}).
\end{alignat*}
For $\q=q$ (i.e., for $I^c=\emptyset$) we interpret, of course, $\psi=\psi_1$ as the
identity on $\prod_I K_i^n=\prod_{i=1}^{r+s} K_i^n$.
As $d'=\sum_I d_i$ we see that 
\begin{alignat}3\label{prodeta}
\det \psi=\prod_I T^{d_in(-1/d'+1/d)}\prod_{I^c} T^{d_in/d}=1.
\end{alignat}
Therefore, $\psi$ lies in
$\mathcal{T}$. 

First suppose $\q=0$, so that $I=\{i\}$ is a singleton. Then
\begin{alignat}3\label{SITq=0}
\psi\S_I(T)=&\\
\nonumber\left\{\z_i\in {K_i^n};T^{-1/d'+1/d}\leq|\z_i|_{\infty}\leq T^{1/d}\right\}
 &\times \left\{({\z}_{i'})_{i'\neq i} \in \prod_{i'\neq i}{K_{i'}}^n;
|{\z}_{i'}|_{\infty}<T^{1/d}\text{ for }i'\neq i\right\}.
\end{alignat}
Now suppose $\q>0$. For $\i\in \IZ^{\q}$ we set
\begin{alignat}1\label{defC1}
\Ze&=\psi_1\left(\ti S_{\Di}(T)\cap \ti\E((1)_I)\right)\subset \prod_I K_i^n,
\end{alignat}
and, with $\Ti$ as in (\ref{defTi}), we define
\begin{alignat}1
\label{defpsiTi}
\psiTi=\psi\circ\Ti.
\end{alignat}
Moreover, we set
\begin{alignat}1
\label{defC2} \Zz=\psi_2\left\{({\z}_{i})_{I^c};
|{\z}_{i}|_{\infty}<1\text{ for }i \in I^c\right\}=\left\{({\z}_{i})_{I^c};
|{\z}_{i}|_{\infty}<T^{1/d}\text{ for }i \in I^c\right\}\subset \prod_{I^c} K_i^n,
\end{alignat}
so that  
\begin{alignat*}1
\Ze\times \Zz=\psiTi \SiF. 
\end{alignat*}

\begin{lemma}\label{superballlemma}
Let $\ka=\sqrt{dn}\exp(\sqrt{q})$ be as in Theorem \ref{mainthm}.
If $\q=0$ then we have
\begin{alignat}1\label{ballq0}
\psi \S_I(T) \subset B_0(\ka T^{1/d}).
\end{alignat}
If $\q>0$ and  $\i\in \IZ^{\q}$ then we have 
\begin{alignat}1\label{superball}
\psiTi \SiF\subset B_0(\ka T^{1/d}).
\end{alignat}
In particular, 
\begin{alignat}1\label{superballCi}
\Zi&\subset B_0(\ka T^{1/d}) \quad (1\leq p\leq 2)
\end{alignat}
for the respective balls $B_0(\ka T^{1/d})$.
\end{lemma}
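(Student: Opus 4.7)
The plan is to reduce the statement to uniform coordinatewise sup-norm bounds. For any point $(\z_1,\ldots,\z_{r+s})\in\prod_{i=1}^{r+s}K_i^n$ the Euclidean length satisfies
\begin{alignat*}1
|(\z_1,\ldots,\z_{r+s})|^2=\sum_{i=1}^{r+s}\sum_{k=1}^n|z_{ik}|^2\leq (r+s)n\max_i|\z_i|_\infty^2\leq dn\max_i|\z_i|_\infty^2,
\end{alignat*}
using $r+s\leq r+2s=d$. Consequently, any sup-norm bound of the form $|\z_i|_\infty\leq\exp(\sqrt{q})T^{1/d}$ holding uniformly in $i$ immediately yields the desired inclusion in $B_0(\ka T^{1/d})$ since $\ka=\sqrt{dn}\exp(\sqrt{q})$. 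The same device applied only to the $I$-coordinates (resp.\ $I^c$-coordinates) gives the corresponding inclusion in (\ref{superballCi}).

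For $\q=0$, the explicit description (\ref{SITq=0}) of $\psi\S_I(T)$ shows directly that every coordinate block $\z_j$ satisfies $|\z_j|_\infty\leq T^{1/d}$, which is stronger than what is needed, so (\ref{ballq0}) follows at once.

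For $\q>0$, I split $\psiTi\SiF=\Ze\times\Zz$ and treat the two factors separately. For $\Zz$, the definition (\ref{defC2}) gives $|\z_i|_\infty<T^{1/d}$ for each $i\in I^c$, so the coordinatewise bound is immediate. For $\Ze$, I first invoke (\ref{tiSFT}) to realise $\ti(S_{\Di}(T))=S_{\tr(F)\cap\C0}(T)$ as a subset of $S_{\C0}(T)$, and then apply the preliminary estimate (\ref{SC0normbound}) to conclude $|\z_i|_\infty\leq\exp(\sqrt{\q})T^{1/d'}$ there. The map $\psi_1$ multiplies each coordinate block by $T^{-1/d'+1/d}$, turning this into $|\z_i|_\infty\leq\exp(\sqrt{\q})T^{1/d}$; since $\q=|I|-1\leq r+s-1=q$ we have $\exp(\sqrt{\q})\leq\exp(\sqrt{q})$, delivering the uniform bound needed both for (\ref{superball}) and for $\Ze$ in (\ref{superballCi}).

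I do not foresee any serious obstacle; the argument is essentially bookkeeping. The content of the proof is that the scalings $T^{-1/d'+1/d}$ on $I$ and $T^{1/d}$ on $I^c$ built into $\psi$ (and recorded in the determinant identity (\ref{prodeta})) are precisely calibrated to equalise the two naturally occurring sup-norm scales, namely the $T^{1/d'}$ coming from $S_{\C0}(T)$ and the trivial bound $1$ on the $I^c$-block, at the common value $T^{1/d}$, up to the harmless factor $\exp(\sqrt{\q})$ that reflects the diameter of the reference cell $\C0$.
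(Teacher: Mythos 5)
Your proof is correct and follows essentially the same route as the paper: the $\q=0$ case is read off from (\ref{SITq=0}), and for $\q>0$ the bound on $\Ze$ comes from $\ti S_{\Di}(T)\subset S_{\C0}(T)$ together with (\ref{SC0normbound}) and the rescaling by $\psi_1$, while $\Zz$ is handled directly from (\ref{defC2}). The only difference is that you spell out the sup-norm-to-Euclidean-norm conversion explicitly, which the paper leaves implicit in the choice of $\ka$.
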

\begin {proof}
As $\ka\geq \sqrt{(q+1)n}$ the claim (\ref{ballq0}) follows immediately from (\ref{SITq=0}).
Next suppose $\q>0$.
Recall from (\ref{tiSFT}) that $\ti S_{\Di}(T)\subset S_{\C0}(T)$. From (\ref{SC0normbound}), and not forgetting the effect of $\psi_1$, 
we see that for any $(\z_i)_I$ in
$\Ze$ we have $|\z_i|_\infty\leq \exp(\sqrt{\q})T^{1/d}$ ($i\in I$).
And, obviously, we also have $|\z_i|_\infty\leq \exp(\sqrt{\q})T^{1/d}$ ($i\in I^c$) for any $(\z_i)_{I^c}$ in $\Zz$.
This proves (\ref{superball}). 
\end{proof}

\section{Lipschitz parameterizations}\label{SectionLippara}

In this section we shall prove that the sets $\psi \S_I(T)$ (if $\q=0$), and $\psiTi \SiF$ (if $\q>0$)
have Lipschitz parameterizable boundaries with Lipschitz constant $L\ll T^{1/d}$. To this end we need a few simple lemmas.
For $\q>0$ we will identify $\Sigma$ with $\IR^{\q}$ via the basis $e_1,\ldots,e_{\q}$ from Section \ref{decomptransSFT}. 
For a subset $\mathcal{Z}$ of Euclidean space we write $\partial \mathcal{Z}$ for its topological boundary.
\begin{lemma}\label{LipF}
Suppose ${\q}>0$, and let $F$ be a set in $\Sigma$ such that
$\partial F$ is in Lip$({\q},\M',L')$, and, moreover, assume 
$F$ lies in $B_0(r_F)$. Then 
$\partial S_{F}(1)$ is in Lip$(d'n,\widetilde{\M},\widetilde{L})$
with $\widetilde{\M}$ and $\widetilde{L}$ depending only on 
$n, \q, \M', L', r_F$.
\end{lemma}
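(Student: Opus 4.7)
The plan is to use log-polar coordinates on each factor $K_i^n$: write $\z_i=a_iu_i$ with $a_i=|\z_i|_{\infty}>0$ and $u_i$ on the ``unit sphere'' $U_i=\{u\in K_i^n:|u|_{\infty}=1\}$, which is a compact set of real dimension $d_in-1$. In these coordinates, $(\z_i)_I\in S_F(1)$ is equivalent to $(d_i\log a_i)_I\in F(1)=F+\vdelta(-\infty,0]$.

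First I would identify $\partial S_F(1)$. Since $\sum_I d_i/d'=1$ we have $\vdelta\notin\Sigma$, so $F(1)$ is a half-cylinder over $F\subset\Sigma$ with topological boundary $\partial F(1)=F\cup(\partial F+\vdelta(-\infty,0])$ in $\IR^{\q+1}$. Pulling back via $(\z_i)_I\mapsto(d_i\log|\z_i|_{\infty})_I$ and adjoining the vertex $\0\in\prod_IK_i^n$ (the unique accumulation point coming from $t\to\infty$, since boundedness of $F$ forces all $|\z_i|_{\infty}$ to vanish at the common rate $\exp(-t/d')$), one sees that $\partial S_F(1)$ is contained in the union of three pieces: the top face (i) $\{(\z_i)_I:\prod_Ia_i^{d_i}=1,\ (d_i\log a_i)_I\in F\}$, the side (ii) $\{(\z_i)_I:(d_i\log a_i)_I\in\partial F+\vdelta(-\infty,0]\}$, and the singleton (iii) $\{\0\}$.

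Each $U_i$ is a finite union of faces of a polydisk in $K_i^n$ and so admits a Lipschitz parameterization $\iota_i$ consisting of $O_n(1)$ maps $[0,1]^{d_in-1}\to K_i^n$ with absolute constants depending only on $n$ and $d_i$. To cover (i) I use that $F\subset B_0(r_F)\cap\Sigma\cong\IR^{\q}$, so a single affine map $\rho_0:[0,1]^{\q}\to\Sigma$ (scaling the unit cube onto an enclosing cube of side $2r_F$) covers $F$ with Lipschitz constant $\ll r_F$; composing with $y\mapsto(\exp(y_i/d_i))_I$ (Lipschitz constant $\ll\exp(r_F)$ on the bounded set) and taking a Cartesian product with the $\iota_i$'s yields $O_n(1)$ Lipschitz maps $[0,1]^{d'n-1}\to\IR^{d'n}$ of constant $\ll_{n,\q,r_F}1$ covering (i); the dimension count is $\q+\sum_I(d_in-1)=d'n-1$. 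For (ii) and (iii), a point of (ii) satisfies $(d_i\log a_i)_I=y-t\vdelta$ with $y\in\partial F$ and $t\geq0$, so the substitution $\tau=\exp(-t/d')\in(0,1]$ gives $\z_i=\tau\exp(y_i/d_i)u_i$, and extending to $\tau\in[0,1]$ absorbs (iii) at $\tau=0$. Combining the $M'$ given Lipschitz maps $\rho_k:[0,1]^{\q-1}\to\Sigma$ for $\partial F$ with the $\iota_i$'s produces $\ll_n M'$ maps
\[
(\vx,\tau,(w_i)_I)\ \longmapsto\ \bigl(\tau\exp(\rho_k(\vx)_i/d_i)\,\iota_i(w_i)\bigr)_I
\]
from $[0,1]^{\q-1}\times[0,1]\times\prod_I[0,1]^{d_in-1}=[0,1]^{d'n-1}$ into $\IR^{d'n}$ that cover (ii) and (iii). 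Since $\tau\leq 1$ and $\exp(y_i/d_i)\leq\exp(r_F)$ remain bounded, the chain rule yields a Lipschitz constant $\ll_{n,r_F,L'}1$ for each such map.

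The main obstacle is the side (ii), whose natural parameter $t\in[0,\infty)$ is unbounded. The substitution $\tau=\exp(-t/d')$ compactifies the parameter domain to $[0,1]$ and, crucially, linearizes the dependence $a_i=\tau\exp(y_i/d_i)$ in $\tau$, producing a genuine Lipschitz map that simultaneously captures the degenerate limit $\0$ at $\tau=0$. Once this substitution is in place, the bounds on $\widetilde{\M}$ and $\widetilde{L}$ in terms of $n,\q,\M',L',r_F$ follow by routine chain-rule estimates on the bounded region.
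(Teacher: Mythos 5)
Your proof is correct. Note that the paper does not actually prove this lemma itself: its ``proof'' is a one-line reference to an external result (Lemma 3 of Masser--Vaaler \cite{1}, with a fully explicit version in \cite[Lemma 7.1]{art1}, plus the remark that the argument survives for $n=1$). What you have written is, in substance, a self-contained reconstruction of that cited lemma, and it follows the same underlying strategy: log-polar coordinates $\z_i=a_iu_i$, reduction of $\partial S_F(1)$ to the three pieces (closure of $F$ pulled back as the ``top face'', the cylinder over $\partial F$ as the ``side'', and the vertex $\0$), and the key compactification $\tau=\exp(-t/d')$ that turns the infinite cusp into a bounded, genuinely Lipschitz parameter and absorbs $\0$ at $\tau=0$. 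Two small points worth tightening if you were to write this out in full: (a) the top face should be taken over $\overline{F}$ rather than $F$ (harmless, since your affine map covers an enclosing cube of $B_0(r_F)\cap\Sigma$ anyway); (b) the images of the given maps $\rho_k$ covering $\partial F$ need not lie in $B_0(r_F)$ --- they only meet $\partial F$, so a priori they lie in $B_0(r_F+L'\sqrt{\q})$ --- but the resulting bound $\exp(r_F+L'\sqrt{\q})$ on the factors $\exp(\rho_k(\vx)_i/d_i)$ still depends only on $\q$, $L'$, $r_F$, so the conclusion is unaffected. Your dimension counts ($\q+\sum_I(d_in-1)=d'n-1$ for the top face and $(\q-1)+1+\sum_I(d_in-1)=d'n-1$ for the side) are exactly right, and the dependence of $\widetilde{\M}$ on the number of faces of the sets $U_i$ is legitimately absorbed into the dependence on $n$ and $\q$ since $d_i\leq 2$ forces $d'\leq 2(\q+1)$.
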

\begin{proof}
The case $n>1$ follows directly from \cite[Lemma 3]{1} (see also \cite[Lemma 7.1]{art1} for a more detailed and completely explicit version). 
However, for $n=1$ the proof remains correct without change. 
\end{proof}

\begin{lemma}\label{lippar}
Suppose $\q>0$, and
recall the definition of $\tr$ and $\Di$ from Section \ref{decomptransSFT}.
Let $Y\geq 1$ be a real number and suppose the boundary of $\tr\Di$ lies in Lip$(\q,M',L')$ with $M'\ll 1$ and $L'\ll 1$.
Then the boundary of $\ti S_{\Di}(Y)$ lies in Lip$(d'n,M,L)$ with $M\ll 1$ and $L\ll Y^{1/d'}$.
\end{lemma}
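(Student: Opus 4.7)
The plan is to reduce the statement to Lemma \ref{LipF} together with the elementary scaling identity (\ref{SFThomexp}). First I would record the key identity (\ref{tiSFT}): the automorphism $\ti$ was constructed precisely so that $\ti S_{\Di}(Y)=S_{\tr(\Di)}(Y)$. Thus the problem of Lipschitz parameterizing $\partial\,\ti S_{\Di}(Y)$ is the same as Lipschitz parameterizing $\partial S_{\tr(\Di)}(Y)$. Moreover, since $\tr$ sends $\Ci$ to $\C0$ and $\Di\subset\Ci$, we have $\tr(\Di)\subset \C0\subset B_0(\sqrt{\q})$, so the set $F:=\tr(\Di)$ is contained in a ball of radius $r_F\ll 1$.

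Next I would invoke Lemma \ref{LipF} with this choice of $F$. By hypothesis $\partial F=\partial\,\tr(\Di)$ lies in $\mathrm{Lip}(\q,M',L')$ with $M',L'\ll 1$, and we have just observed $r_F\ll 1$. All the parameters that the constants in Lemma \ref{LipF} depend on ($n,\q,M',L',r_F$) are therefore either fixed or bounded by absolute constants, so the conclusion of Lemma \ref{LipF} gives us maps $\rho_1,\ldots,\rho_{\widetilde{M}}\colon[0,1]^{d'n-1}\to \prod_I K_i^n$ with
\[
|\rho_j(\vx)-\rho_j(\vy)|\leq \widetilde{L}|\vx-\vy|,\qquad \widetilde{M}\ll 1,\quad \widetilde{L}\ll 1,
\]
whose images cover $\partial S_{F}(1)$.

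Finally I would use the homogeneity relation (\ref{SFThomexp}), namely $S_{F}(Y)=Y^{1/d'}S_{F}(1)$, which holds for every $F\subset\Sigma$ and $Y\geq 1$. Consequently
\[
\partial S_{F}(Y)=Y^{1/d'}\,\partial S_{F}(1)\subset \bigcup_{j=1}^{\widetilde{M}} Y^{1/d'}\rho_j\bigl([0,1]^{d'n-1}\bigr).
\]
The scaled maps $\widehat{\rho}_j:=Y^{1/d'}\rho_j$ are Lipschitz with constant $Y^{1/d'}\widetilde{L}\ll Y^{1/d'}$, and there are still only $\widetilde{M}\ll 1$ of them. Setting $M=\widetilde{M}$ and $L=Y^{1/d'}\widetilde{L}$ gives exactly the claimed parameterization of $\partial\,\ti S_{\Di}(Y)=\partial S_{F}(Y)$ in $\mathrm{Lip}(d'n,M,L)$.

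Since all of the substance is already packaged into Lemma \ref{LipF}, there is no real obstacle here; the only thing to be careful about is verifying that the radius bound $r_F\ll 1$ is uniform in $\i$ (which is automatic because $\tr(\Di)\subset\C0$), so that the implicit constants $\widetilde{M},\widetilde{L}$ coming from Lemma \ref{LipF} do not depend on the cell index $\i$.
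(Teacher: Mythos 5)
Your proposal is correct and follows exactly the paper's own route: identify $\ti S_{\Di}(Y)$ with $S_{\tr(\Di)}(Y)$ via (\ref{tiSFT}), note $\tr(\Di)\subset \C0\subset B_0(\sqrt{\q})$, and then combine Lemma \ref{LipF} with the scaling identity (\ref{SFThomexp}). The only difference is that you spell out the rescaling of the Lipschitz maps and the uniformity in $\i$, which the paper leaves implicit.
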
  
\begin{proof}
Clearly, $\tr(\Di)=\tr(F)\cap \C0$ is contained in $B_0(\sqrt{\q})$.
Now $\ti S_{\Di}(Y)=S_{\tr(\Di)}(Y)$ and thus the lemma follows from (\ref{SFThomexp}) and Lemma \ref{LipF}.
\end{proof}

\begin{lemma}\label{Lippsi}
If $\q=0$ then $\partial \psi \S_I(T)$ lies in Lip$(dn,M,L)$ with $M\ll 1$ and $L\ll T^{1/d}$.
If $\q>0$ and $\partial \tr\Di$ lies in Lip$(\q,M',L')$ with $M'\ll 1$ and $L'\ll 1$ then
the set $\partial \psiTi \SiF$ lies in Lip$(dn,M,L)$ with $M\ll 1$ and $L\ll T^{1/d}$.\\
\end{lemma}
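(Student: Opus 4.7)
The plan splits according to whether $\q$ vanishes. When $\q=0$, the set $\psi\S_I(T)$ is the explicit Cartesian product described in~(\ref{SITq=0}): one factor is the sup-norm annulus $\{T^{-1/d'+1/d}\le|\z_i|_\infty\le T^{1/d}\}$ in $K_i^n$, and the remaining factors are the sup-norm boxes of radius $T^{1/d}$ in $K_{i'}^n$ for $i'\ne i$. Each factor is a box or difference of boxes of diameter $\ll T^{1/d}$, whose boundary decomposes into $O(1)$ flat hyperfaces; each hyperface is an affine image of $[0,1]^{dn-1}$ with Lipschitz constant $\ll T^{1/d}$, yielding the conclusion immediately.

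For $\q>0$ I would handle the two Cartesian factors $\Ze$ and $\Zz$ (from~(\ref{defC1}) and~(\ref{defC2})) in turn, then combine via $\partial(\Ze\times\Zz)\subset(\partial\Ze\times\overline{\Zz})\cup(\overline{\Ze}\times\partial\Zz)$. For $\Ze$, Lemma~\ref{lippar} applied to the hypothesis that $\partial\tr\Di$ lies in Lip$(\q,M',L')$ with $M',L'\ll 1$ gives that $\partial\ti S_{\Di}(T)$ lies in Lip$(d'n,O(1),O(T^{1/d'}))$. The intersection with $\ti\E((1)_I)=\E((\gamma_i)_I)$ is handled via the standard inclusion $\partial(A\cap B)\subset(\partial A\cap\overline B)\cup(\overline A\cap\partial B)$: the boundary of $\E((\gamma_i)_I)$ is the union over $i_0\in I$ of the flat faces $\{|\z_{i_0}|_\infty=\gamma_{i_0},\,|\z_i|_\infty\ge\gamma_i\text{ for }i\ne i_0\}$, and each such face, restricted to the bounded region $\ti S_{\Di}(T)\subset B_0(\exp(\sqrt{\q})T^{1/d'})$ guaranteed by~(\ref{tiSFT}) and~(\ref{SC0normbound}), is covered by $O(1)$ affine parameterizations from $[0,1]^{d'n-1}$ with Lipschitz constant $\ll T^{1/d'}$. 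Post-composing with the dilation $\psi_1=T^{-1/d'+1/d}\cdot\mathrm{id}$ (noting $T^{-1/d'+1/d}\le 1$ since $d'\le d$) scales these Lipschitz constants by $T^{-1/d'+1/d}$, so that $\partial\Ze$ lies in Lip$(d'n,O(1),O(T^{1/d}))$.

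For $\Zz$, which by~(\ref{defC2}) is the sup-norm box of radius $T^{1/d}$ in $\prod_{I^c}K_i^n$, the boundary trivially lies in Lip$((d-d')n,O(1),O(T^{1/d}))$. To combine the factors: in $\partial\Ze\times\overline{\Zz}$, tensoring the cover of $\partial\Ze$ with a linear parameterization of $\overline{\Zz}$ by $[0,1]^{(d-d')n}$ (Lipschitz constant $\ll T^{1/d}$) yields $O(1)$ maps from $[0,1]^{dn-1}$ with constant $\ll T^{1/d}$; in $\overline{\Ze}\times\partial\Zz$, Lemma~\ref{superballlemma} places $\overline{\Ze}\subset B_0(\ka T^{1/d})$, so $\overline{\Ze}$ admits a linear parameterization from $[0,1]^{d'n}$ with constant $\ll T^{1/d}$ that tensors with the cover of $\partial\Zz$ to yield the same conclusion.

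The main obstacle is the intersection bookkeeping in the second step: one must verify that the faces of $\partial\E((\gamma_i)_I)$, after restriction to the bounded set $\ti S_{\Di}(T)$, admit $O(1)$-multiplicity Lipschitz covers with constant bounded independently of the particular $\gamma_i$. This is tractable because each face is a product of a sup-norm sphere in one $K_{i_0}^n$-coordinate with bounded subsets of the remaining coordinates, all of diameter $\ll T^{1/d'}$; the crucial scaling step is then the dilation $\psi_1$, whose factor $T^{-1/d'+1/d}$ is precisely what converts $T^{1/d'}$ into the desired $T^{1/d}$.
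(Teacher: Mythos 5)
Your argument follows the paper's proof essentially step for step: the same product decomposition $\partial(\Ze\times\Zz)\subset(\partial\Ze\times\overline{\Zz})\cup(\overline{\Ze}\times\partial\Zz)$, the same splitting of $\partial\Ze$ into the $S_{\Di}$-piece (handled via Lemma \ref{lippar}, equivalently Lemma \ref{LipF}) and the faces of the set $\E((\cdot)_I)$, and the same use of the dilation $\psi_1$ to convert the Lipschitz constant $\ll T^{1/d'}$ into $\ll T^{1/d}$. The only imprecision is that for the complex places $i>r$ the sup-norm spheres $|\z_i|_\infty=\zeta$ are not unions of flat hyperfaces, so those pieces need trigonometric rather than affine parameterizations; this changes nothing beyond the choice of maps and is exactly how the paper treats them.
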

\begin{proof}
Fist suppose $\q=0$. The sets in $K_i^n$ defined by $|\z_i|_\infty=\zeta$ are in Lip$(d_i n,2n,\zeta')$ with $\zeta'\ll \zeta$, 
e.g., we can take $2n$ linear (if $i\leq r$) 
or $n$ trigonometrical (if $i>r$) maps. Then one easily gets a parameterization of the sets $|\z_i|_\infty=\zeta_1$, $|\z_{i'}|_\infty\leq \zeta_2$ ($i'\neq i$)
in $\prod_{I} K_i^n\times \prod_{I^c} K_i^n$ 
with $M\ll 1$ maps and Lipschitz constants $L\ll \max\{\zeta_1,\zeta_2\}$. In view of (\ref{SITq=0}) this proves the lemma for $\q=0$.

Now suppose $\q>0$. We need to show that $\partial(\Ze\times \Zz)$
lies in Lip$(dn,M,L)$.
Clearly, $\partial(\Ze\times \Zz)$ is contained in the union of $\overline{\Ze}\times \partial \Zz$ and $\partial \Ze\times \overline{\Zz}$,
where the bar denotes the topological closure.
Moreover, by (\ref{superballCi}) we know $\overline{\Ze}$ and $\overline{\Zz}$ lie both in a ball $B_0(\ka T^{1/d})$.
Therefore, it suffices to show that $\partial \Ze\in$ Lip$(d'n,M'',L'')$ and, if $d-d'>0$, also
$\partial \Zz\in$ Lip$((d-d')n,M'',L'')$ with some $M''\ll 1$ and some $L''\ll T^{1/d}$.
Next note that
\begin{alignat*}1
&\psi_1\ti(S_{\Di}(T))=T^{1/d}S_{\tr\Di}(1),\\
&\psi_1\ti\left(\E((1)_I)\right)=\E((T^{1/d-1/d'}\gamma_{i})_I).
\end{alignat*}
As $\Ze$ is the intersection of these two sets, we see that $\partial \Ze$ is covered by the union of 
$\partial \E((T^{1/d-1/d'}\gamma_{i})_I)\cap\overline{\Ze}$ and $\partial T^{1/d}S_{\tr\Di}(1)$. Regarding the latter recall that 
$\tr\Di\subset\C0\subset B_0(\sqrt{\q})$ and $\partial\tr \Di$ lies in Lip$(\q,M',L')$.
Therefore, we can apply Lemma \ref{LipF} to conclude $\partial T^{1/d}S_{\tr\Di}(1)$ lies in Lip$(d'n,M'',L'')$
with some $M''\ll 1$ and some $L''\ll T^{1/d}$.
And for $\partial \E((T^{1/d-1/d'}\gamma_{i})_I)\cap\overline{\Ze}$ we use the same argument as for $\q=0$ to see that it is in 
Lip$(d'n,M'',L'')$ with an $M''\ll 1$ and an $L''\ll T^{1/d}$. And again, the same argument shows that, for $d>d'$, 
$\partial \Zz$ lies in Lip$((d-d')n,M'',L'')$ with an $M''\ll 1$ and an $L''\ll T^{1/d}$. This proves the 
Lemma \ref{Lippsi}.
\end{proof}

\section{Proof of Theorem \ref{mainthm}}\label{proofmainthm}
To simplify the notation we write $\S_I$ for $\S_I(T)$.
First we assume $\q=0$.\\
Recall that $\psi$ lies in
$\mathcal{T}$, and, clearly, we have $|\S_I\cap \Lambda|=|\psi \S_I\cap \psi\Lambda|$. By (\ref{ballq0}) we have 
$\psi(\S_I)\subset B_0(\ka T^{1/d})$, and by hypothesis of Theorem \ref{mainthm} we have 
$\lambda_i(\psi\Lambda)\geq \mun_i$ for $1\leq i\leq dn$.
Thanks to Lemma \ref{Lippsi} we can apply Theorem \ref{TWi1} which gives the first inequality of Theorem \ref{mainthm}.
For the second inequality we apply Corollary \ref{TWi1cor} with $a=1$ and note that $\0\notin \psi(\S_I)$.
And finally, as $\0\notin \psi(\S_I)$ and $\psi(\S_I)\subset B_0(\ka T^{1/d})$ we see that 
$|\Lambda\cap \S_I|=0$ if $T^{1/d}<(1/\ka)\mun_1$. 
This finishes the proof of Theorem \ref{mainthm} for $\q=0$.\\

For the rest of this section we assume $\q>0$, and, for the rest of the paper, we fix $F$ as 
\begin{alignat}3\label{defF0}
\F=(\IR_{\geq 0}^{\q+1}-\vdelta\log T)\cap \Sigma. 
\end{alignat}
\begin{lemma}\label{lemmaSIF}
We have
\begin{alignat*}3
\S_I=\left(S_{\F}(T)\cap \E((1)_I)\right)\times \{({\z}_{i})_{I^c};
|{\z}_{i}|_{\infty}<1   \text{ for }i \in I^c\}.
\end{alignat*}
\end{lemma}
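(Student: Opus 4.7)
\textbf{Plan for the proof of Lemma \ref{lemmaSIF}.} The second Cartesian factor on the right-hand side literally matches the $I^c$-component of $\S_I(T)$ coming from (\ref{SITcartprod}), so it suffices to identify the first factors, i.e.\ to prove
\[
\bigl\{(\z_i)_I:\ |\z_i|_\infty\geq 1\text{ for all }i\in I,\ \ \prod_I|\z_i|_\infty^{d_i}\leq T\bigr\}=S_{\F}(T)\cap \E((1)_I).
\]
Both sides are contained in $\E((1)_I)$, so I would fix $(\z_i)_I$ with $|\z_i|_\infty\geq 1$ for every $i\in I$ and set $\vx=(d_i\log|\z_i|_\infty)_I$. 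Then $\vx\in\IR_{\geq 0}^{\q+1}$, and $\sum_I x_i=\log\prod_I|\z_i|_\infty^{d_i}$, so the condition $\prod_I|\z_i|_\infty^{d_i}\leq T$ is equivalent to $\sum_I x_i\leq \log T$. It therefore remains to show that under the standing hypothesis $\vx\geq 0$, one has $\sum_I x_i\leq \log T$ if and only if $\vx\in\F(T)$.

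For the forward implication, set $c=\sum_I x_i\in[0,\log T]$ and consider $\vy:=\vx-\vdelta c$. Since the coordinates of $\vdelta=(d_i/d')_I$ sum to $1$, $\vy$ lies in $\Sigma$. Moreover
\[
\vy+\vdelta\log T=\vx+\vdelta(\log T-c)\in\IR_{\geq 0}^{\q+1},
\]
because $\vx\geq 0$, $\vdelta>0$ and $\log T-c\geq 0$; hence $\vy\in\F$ by (\ref{defF0}). Writing $\vx=\vy+\vdelta c$ with $c\in(-\infty,\log T]$ then gives $\vx\in\F+\vdelta(-\infty,\log T]=\F(T)$. Conversely, if $\vx\in\F(T)$, decompose $\vx=\vy+\vdelta c$ with $\vy\in\F\subset\Sigma$ and $c\leq\log T$. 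Since $\sum_I y_i=0$ and $\sum_I \delta_i=1$, summing gives $\sum_I x_i=c\leq \log T$, as required.

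There is no real obstacle here beyond unfolding the definitions; the only thing worth double-checking is that the map $c\mapsto \vdelta c$ used to translate into $\Sigma$ preserves the inequality constraints correctly, which is exactly what the identities $\sum_I\delta_i=1$ and $\vdelta\geq 0$ ensure.
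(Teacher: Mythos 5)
Your proposal is correct and follows essentially the same route as the paper: both reduce to the first Cartesian factor via (\ref{SITcartprod}) and then verify the set identity by decomposing $\vx$ along $\Sigma\oplus\IR\vdelta$, using that the coordinates of $\vdelta$ sum to $1$. Your version is slightly more explicit in making the equivalence $\prod_I|\z_i|_\infty^{d_i}\leq T \Leftrightarrow \sum_I x_i\leq\log T$ and both inclusions visible, whereas the paper dispatches one inclusion as immediate from the definitions; there is no substantive difference.
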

\begin{proof}
In view of (\ref{SITcartprod}) it suffices to show  
\begin{alignat}3\label{firstcompeq}
\left\{(\z_i)_I\in \prod_I{K_i^n};\prod_I |\z_i|_{\infty}^{d_i}\leq T,|\z_i|_{\infty}\geq 1 \text{ for } i\in I\right\}=
S_{\F}(T)\cap \E((1)_I)
\end{alignat}
From the definitions (\ref{inkl1}) and (\ref{Esets}) we see immediately that the right hand-side is contained in the left hand-side for any 
choice of $\F\subset\Sigma$ whatsoever. Now for the other inclusion note 
that the left hand-side in (\ref{firstcompeq}) means
\begin{alignat*}3
(d_i\log|\z_i|_\infty)_I\in \IR_{\geq 0}^{\q+1}\cap \left(\Sigma+\vdelta(-\infty,\log T]\right).
\end{alignat*}
Thus we need to show
\begin{alignat*}3
\IR_{\geq 0}^{\q+1}\cap \left(\Sigma+\vdelta(-\infty,\log T]\right)
\subset \F(T)=\left(\left(\IR_{\geq 0}^{\q+1}-\vdelta\log T\right)\cap\Sigma\right)+\vdelta(-\infty,\log T].
\end{alignat*}
Any element in the set on the left hand-side can be written as $\vx+\vdelta t$ with $\vx\in\Sigma$ and $t\in(-\infty, \log T]$.
As $\vx+\vdelta t\in \IR_{\geq 0}^{\q+1}$ we get $\vx\in \IR_{\geq 0}^{\q+1}-\vdelta\log T\cap\Sigma$, and therefore
\begin{alignat*}3
\vx+\vdelta t\in \left(\left(\IR_{\geq 0}^{\q+1}-\vdelta\log T\right)\cap\Sigma\right)+\vdelta(-\infty,\log T].
\end{alignat*}
This concludes the proof.
\end{proof}

\begin{lemma}
We have
\begin{alignat}3\label{bFbound}
\F\subset B_0(2\log T) 
\end{alignat}
\end{lemma}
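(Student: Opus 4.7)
The plan is to parameterize $\F$ explicitly by unpacking the definition and then use the fact that the components of $\vdelta$ are non-negative and sum to $1$. Fix $\vx \in \F$. By definition there exists $\vy \in \IR_{\geq 0}^{\q+1}$ with $\vx = \vy - \vdelta \log T$, and $\vx$ lies in $\Sigma$, so $\sum_{i} x_i = 0$. Combined with $\sum_i \delta_i = 1$ (since $d' = \sum_I d_i$), this forces $\sum_i y_i = \log T$.

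Next, I would bound $|\vx|^2$ by expanding the square componentwise:
\begin{equation*}
|\vx|^2 = \sum_i (y_i - \delta_i \log T)^2 = \sum_i y_i^2 \;-\; 2\log T \sum_i y_i \delta_i \;+\; (\log T)^2 \sum_i \delta_i^2.
\end{equation*}
The cross term is non-negative because $y_i, \delta_i \geq 0$, so it only helps us. For the other two, I would use the elementary inequality $\sum a_i^2 \leq (\sum a_i)^2$ valid whenever $a_i \geq 0$. Applied to $\vy$ this gives $\sum y_i^2 \leq (\log T)^2$, and applied to $\vdelta$ it gives $\sum \delta_i^2 \leq 1$. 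Combining these yields $|\vx|^2 \leq 2(\log T)^2$, hence $|\vx| \leq \sqrt{2}\log T \leq 2\log T$ as $T\geq 1$.

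There is no serious obstacle here; the statement is essentially a coordinate-geometry exercise, and the only thing one needs to keep track of is that $\vdelta$ is a probability vector and $\vx$ lies in the sum-zero hyperplane, which together pin the ``shift amount'' $\sum y_i$ to exactly $\log T$. The constant $2$ in the statement is not tight (the argument actually gives $\sqrt{2}$), but a clean integer bound is presumably all that is needed for the subsequent application of Lemma \ref{mFbound} to estimate $|\m_{\F}|$.
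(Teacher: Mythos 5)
Your argument is correct. You and the paper both start the same way, by writing a point of $\F$ as $\vx=\vy-\vdelta\log T$ with $\vy\in\IR_{\geq 0}^{\q+1}$ and exploiting that $\vx$ lies in the sum-zero hyperplane while $\vdelta$ is a probability vector; the only divergence is in the final norm estimate. The paper bounds the $\ell^1$-norm: since $x_i\geq -\delta_i\log T$, the negative coordinates contribute at most $\sum_i\delta_i\log T=\log T$ in absolute value, and the zero-sum condition forces the positive coordinates to contribute the same, giving $|x_1|+\cdots+|x_{\q+1}|\leq 2\log T$ and hence the Euclidean bound. You instead expand $|\vx|^2$ and use $\sum a_i^2\leq(\sum a_i)^2$ for non-negative $a_i$, which yields the slightly sharper $|\vx|\leq\sqrt{2}\log T$. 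One cosmetic slip: the cross term $-2\log T\sum_i y_i\delta_i$ is non-\emph{positive} (it is a non-negative quantity being subtracted), which is what lets you discard it; the mathematics is unaffected. Either constant suffices for the subsequent application of Lemma \ref{mFbound}.
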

\begin{proof}
Suppose $(x_1,\ldots,x_{\q+1})\in \F$. As $x_1+\cdots +x_{\q+1}=0$ we see that the sum over the positive coordinates
equals minus the sum over the negative coordinates and thus $|x_1|+\cdots +|x_{\q+1}|\leq 2\sum_I (d_i/d')\log T=2\log T$.
This proves the lemma. 
\end{proof}
Recall the definition of $\SibF$ from (\ref{defSiF}). The disjoint union (\ref{partSF1}), in conjunction with Lemma \ref{lemmaSIF},
leads to the disjoint union 
\begin{alignat}3\label{ZIpart}
\S_I=\bigcup_{\m_{\F}} \SibF,
\end{alignat}
which in turn yields
\begin{alignat*}5
|\S_I\cap \Lambda|=\sum_{\m_{\F}}|\SibF\cap \Lambda|.
\end{alignat*}
As the $\psiTi$ are automorphisms we conclude
\begin{alignat}5
\label{partSF4}
|\S_I\cap \Lambda|=\sum_{\m_{\F}}|\psiTi\SibF\cap \psiTi\Lambda|.
\end{alignat}
We will apply Lemma \ref{Lippsi} with our choice of $\F$ given in (\ref{defF0}). 
We start off by verifying the necessary conditions.
\begin{lemma}\label{SF0TST}
Let $\F$ be as in (\ref{defF0}). There exist $M'\ll 1$ and $L'\ll 1$ such that $\partial \tr\F_{\i}$ lies in Lip$(\q,M',L')$.
\end{lemma}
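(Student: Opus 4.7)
The plan is to observe that $\tau_{\bf j}(\F_{\j})$ is a convex polytope sitting inside the unit cell $C_{\bf 0}$, with a number of facets and a collection of facet normals that depend only on $\q$ (and hence on $d$), independently of $T$ and $\j$. Identifying $\Sigma$ with $\IR^{\q}$ through the orthonormal basis $e_1,\ldots,e_{\q}$, the cell $C_{\bf 0}$ becomes the unit cube $[0,1)^{\q}$. The defining condition $\F=(\IR_{\geq 0}^{\q+1}-\vdelta\log T)\cap \Sigma$ translates into $\q+1$ closed affine half-space inequalities on $\Sigma$, one per coordinate of $\IR^{\q+1}$. The translation $\tau_{\bf j}$ preserves $\Sigma$ and merely shifts the right-hand sides of these inequalities, leaving their normal directions unchanged.

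Consequently $\tau_{\bf j}(\F_{\j})=\tau_{\bf j}(\F)\cap C_{\bf 0}$ is a convex polytope in $[0,1]^{\q}$ cut out by at most $(\q+1)+2\q=3\q+1$ affine half-space constraints, with normal directions drawn from the fixed finite set consisting of the projections of the standard basis of $\IR^{\q+1}$ onto $\Sigma$ together with $\pm e_1,\ldots,\pm e_{\q}$. Its boundary decomposes into at most $3\q+1$ closed $(\q-1)$-dimensional convex faces, each contained in $[0,1]^{\q}$ and therefore of diameter at most $\sqrt{\q}$.

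Each such face $F'$ lies in a $(\q-1)$-dimensional affine subspace $H\subset\IR^{\q}$. Fixing an isometry $\iota:\IR^{\q-1}\to H$ and an affine surjection $\phi:[0,1]^{\q-1}\to [0,\sqrt{\q}]^{\q-1}$, the composition $\iota\circ\phi:[0,1]^{\q-1}\to\IR^{\q}$ has Lipschitz constant $O(\sqrt{\q})$ and, after an appropriate translation, its image contains $F'$. Summing over the at most $3\q+1$ faces shows that $\partial\tau_{\bf j}(\F_{\j})$ lies in Lip$(\q,M',L')$ with $M'\ll 1$ and $L'\ll 1$, depending only on $\q\leq d-1$, hence on $d$.

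The case $\q=1$ is handled separately and trivially: $\tau_{\bf j}(\F_{\j})$ is then a closed subinterval of $\Sigma\cong\IR$ (possibly empty), so $\partial\tau_{\bf j}(\F_{\j})$ consists of at most two points, which agrees with the convention for Lip$(1,M',L')$. The argument is essentially a combinatorial observation about the bounded polyhedral complexity of $\tau_{\bf j}(\F_{\j})$, so I do not anticipate any genuine technical obstacle; the only point to watch is that the facet normals really are independent of $T$ and $\j$, which is why the translation $\tau_{\bf j}$ must act in $\Sigma$ and not in $\IR^{\q+1}$.
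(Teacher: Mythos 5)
Your proof is correct, but it reaches the Lipschitz parametrization by a different (more self-contained) route than the paper. The paper's proof is essentially two lines: $\F$, hence $\tr\F$, is convex, $\C0$ is convex and contained in $B_0(\sqrt{\q})$, so $\tr\F_{\j}=\tr\F\cap\C0$ is a bounded convex set; the conclusion then follows for $\q>1$ by citing a general theorem (\cite[Theorem 2.6]{WidmerLNCL}) asserting that the boundary of any bounded convex set in $\IR^{\q}$ admits a Lipschitz parametrization with constants controlled by the dimension and the radius of a containing ball, with $\q=1$ handled trivially. You instead exploit the stronger fact that $\tr\F_{\j}$ is not merely convex but a polytope cut out by at most $3\q+1$ half-spaces whose normals lie in a fixed finite set, and you parametrize its boundary facet by facet via affine surjections from $[0,1]^{\q-1}$. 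What your argument buys is independence from the external convexity theorem and completely explicit constants ($M'\leq 3\q+1$, $L'\leq\sqrt{\q}$); what it costs is that it is tied to the polyhedral structure, whereas the paper's black box applies verbatim to any bounded convex set (and is reused in that generality elsewhere in this literature). Your closing caveat --- that the facet normals must be read off inside $\Sigma$, where $\tr$ only shifts the affine offsets --- is exactly the right point to watch, and your treatment of it is sound; the only degenerate cases worth a sentence are when $\tr\F_{\j}$ is empty or has empty interior, in which cases $\partial\tr\F_{\j}$ is respectively empty or equal to the closure of $\tr\F_{\j}$, which is then contained in one of the defining hyperplanes, so the same facet covering still applies.
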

\begin{proof}
Clearly, $\F$, and therefore also $\tr\F$, is convex. And, clearly, $\C0$ is convex and contained in $B_0(\sqrt{\q})$.
Hence $\tr\F_{\i}=\tr\F\cap \C0$ is convex and lies in $B_0(\sqrt{\q})$.
Now if $\q=1$ the lemma is trivial, and if $\q>1$ it follows immediately from 
\cite[Theorem 2.6]{WidmerLNCL}.
\end{proof}

\begin{lemma}\label{LipparpsiTiSibF}
The set $\partial \psiTi(\SibF)$ lies in
Lip$(dn,M,L)$ with some $M\ll 1$ and some $L\ll T^{1/d}$.
\end{lemma}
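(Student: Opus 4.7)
The plan is essentially a one-line deduction combining the two preceding lemmas. Specifically, Lemma \ref{Lippsi} already provides the precise conclusion we want, namely that $\partial \psiTi(\SibF)$ lies in Lip$(dn, M, L)$ with $M \ll 1$ and $L \ll T^{1/d}$, under the hypothesis that $\partial \tr \Di$ lies in Lip$(\q, M', L')$ with $M' \ll 1$ and $L' \ll 1$. Our task therefore reduces to verifying this hypothesis for the particular set $\F$ fixed in (\ref{defF0}).

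First I would recall that the only ingredient needed for Lemma \ref{Lippsi} is the Lipschitz parameterizability of $\partial \tr \Di$ (with $\F$ playing the role of $F$), and that this is exactly what Lemma \ref{SF0TST} establishes: indeed, $\F = (\IR_{\geq 0}^{\q+1} - \vdelta \log T) \cap \Sigma$ is the intersection of a half-space translate with the hyperplane $\Sigma$, hence convex, so that $\tr \F_{\j} = \tr(\F) \cap \C0$ is a convex set contained in the ball $B_0(\sqrt{\q})$; convex bodies inside a fixed-radius ball admit Lipschitz parameterizations of their boundary with constants depending only on the ambient dimension (and trivially so in dimension one), yielding $M' \ll 1$ and $L' \ll 1$ as required.

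With the hypothesis of Lemma \ref{Lippsi} verified, I would simply apply that lemma to conclude that $\partial \psiTi(\SibF)$ lies in Lip$(dn, M, L)$ for some $M \ll 1$ and some $L \ll T^{1/d}$, which is the assertion of Lemma \ref{LipparpsiTiSibF}.

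There is no real obstacle here: all the technical work, namely tracking the effect of the diagonal maps $\ti$ and $\psi$ on the Lipschitz constants (picking up the factor $T^{1/d}$) and handling the two Cartesian factors corresponding to $I$ and $I^c$, has already been carried out in the proof of Lemma \ref{Lippsi}. The only thing to be careful about is to check that our specific $\F$ satisfies the convexity/boundedness hypotheses cleanly; once that is observed, the proof is a one-sentence invocation of the two previous lemmas.
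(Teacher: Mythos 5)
Your proposal is correct and matches the paper's own proof, which likewise obtains the statement as an immediate consequence of Lemma \ref{SF0TST} (verifying the Lipschitz hypothesis on $\partial \tr\F_{\j}$ via convexity of $\F$) combined with Lemma \ref{Lippsi}. No gaps.
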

\begin{proof}
This is an immediate consequence of Lemma \ref{SF0TST} and Lemma \ref{Lippsi}. 
\end{proof}

\begin{lemma}\label{latticeptsest}
We have
\begin{alignat*}1
\left||\psiTi(\SibF)\cap \psiTi(\Lambda)|-\frac{\Vol \SibF}{\det\Lambda}\right|&\ll \max_{0\leq p<dn}\frac{T^{p/d}}{\mun_1\cdots\mun_p},\\
\left||\psiTi(\SibF)\cap \psiTi(\Lambda)|-\frac{\Vol \SibF}{\det\Lambda}\right|&\ll \frac{T^{n-1/d}}{\mun_1^{nd-1}},\\
|\psiTi(\SibF)\cap \psiTi(\Lambda)|&=0 \text{ if }T^{1/d}<(1/\ka)\mun_1.
\end{alignat*}
\end{lemma}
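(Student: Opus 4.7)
The plan is to apply the general counting results of Section \ref{countingprinciples} to the transformed set $\psiTi(\SibF)$ and the transformed lattice $\psiTi(\Lambda)$, making use of the preparatory lemmas already at hand. The key observation is that $\psiTi = \psi \circ \Ti$ is itself an element of $\mathcal{T}$: by (\ref{prodgammai}) the map $\Ti$ satisfies the condition (\ref{phidet}) (the factors on the $I^c$ components are $1$), and by (\ref{prodeta}) so does $\psi$. Hence the hypothesis of Theorem \ref{mainthm} gives
\begin{alignat*}1
\lambda_p(\psiTi\Lambda) \geq \mun_p \qquad (1 \leq p \leq dn).
\end{alignat*}
Moreover, $\psiTi$ has determinant $1$, so $\Vol \psiTi(\SibF) = \Vol \SibF$ and $\det \psiTi(\Lambda) = \det \Lambda$.

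For the first inequality, I will invoke Theorem \ref{TWi1} applied to $Z = \psiTi(\SibF)$ and the lattice $\psiTi(\Lambda)$. Lemma \ref{LipparpsiTiSibF} ensures that $\partial Z$ lies in $\textup{Lip}(dn,M,L)$ with $M \ll 1$ and $L \ll T^{1/d}$, so Theorem \ref{TWi1} yields
\begin{alignat*}1
\left||\psiTi(\SibF)\cap \psiTi(\Lambda)|-\frac{\Vol \SibF}{\det\Lambda}\right| \ll \max_{0\leq p<dn}\frac{L^p}{\lambda_1(\psiTi\Lambda)\cdots\lambda_p(\psiTi\Lambda)} \ll \max_{0\leq p<dn}\frac{T^{p/d}}{\mun_1\cdots\mun_p}.
\end{alignat*}

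For the second inequality, I will apply Corollary \ref{TWi1cor} with $a=1$. By (\ref{superball}) of Lemma \ref{superballlemma} we have $\psiTi(\SibF)\subset B_0(\ka T^{1/d})$, and since $L \ll T^{1/d}$, after absorbing constants we may view this as an inclusion $Z \subset B_0(\kappa L)$ with $\kappa \geq 1$. Corollary \ref{TWi1cor} then gives
\begin{alignat*}1
\left||\psiTi(\SibF)\cap \psiTi(\Lambda)|-\frac{\Vol \SibF}{\det\Lambda}\right| \ll \frac{(\kappa L)^{dn-1}}{\lambda_1(\psiTi\Lambda)^{dn-1}} \ll \frac{T^{(dn-1)/d}}{\mun_1^{dn-1}} = \frac{T^{n-1/d}}{\mun_1^{dn-1}}.
\end{alignat*}

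For the vanishing statement, I note that $\SibF$ excludes the origin: its first factor is contained in $\E((1)_I)$, which forces $|\z_i|_\infty \geq 1$ for every $i \in I$, so no point in $\SibF$ is zero, and consequently $\0 \notin \psiTi(\SibF)$. Any point of $\psiTi(\SibF)\cap \psiTi(\Lambda)$ is therefore a nonzero vector in $\psiTi(\Lambda)$, of Euclidean length at least $\lambda_1(\psiTi\Lambda) \geq \mun_1$, and at most $\ka T^{1/d}$ by (\ref{superball}). If $T^{1/d} < \mun_1/\ka$ these bounds are incompatible, so the intersection is empty. The only mild subtlety throughout is keeping track of the fact that the Lipschitz constant $L \ll T^{1/d}$ aligns precisely with the diameter bound $\ka T^{1/d}$, which is what allows Corollary \ref{TWi1cor} to be invoked cleanly.
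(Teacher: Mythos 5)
Your argument is correct and follows essentially the same route as the paper: apply Theorem \ref{TWi1} for the first bound, Corollary \ref{TWi1cor} with $a=1$ (together with Lemma \ref{LipparpsiTiSibF}, the inclusion (\ref{superball}), and $\0\notin\psiTi\SibF$) for the second, and the lower bound $\lambda_1(\psiTi\Lambda)\geq\mun_1$ for the vanishing statement. The only cosmetic remark is that the fact $\0\notin\psiTi\SibF$ is already needed when invoking Corollary \ref{TWi1cor} (so that $Z\cap\Lambda(1)=Z\cap\Lambda$), not just in the final vanishing claim.
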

\begin{proof}
Again, we want to apply Theorem \ref{TWi1} and Corollary \ref{TWi1cor}. First recall that $\psiTi \in \mathcal{T}$, in particular,
$\Vol \psiTi\SibF=\Vol \SibF$ and $\det\psiTi(\Lambda)=\det(\Lambda)$.
By Lemma \ref{LipparpsiTiSibF} we know $\partial \psiTi(\SibF)$ lies in
Lip$(dn,M,L)$ with some $M\ll 1$ and some $L\ll T^{1/d}$.
By (\ref{superball}) we have $\psiTi\SibF \subset B_0(\kappa T^{1/d})$ with $1\leq \kappa \ll 1$, and as $\0\notin \S_I$ we also have
$\0\notin \psiTi \SibF$.
Applying Theorem \ref{TWi1cor} and Corollary \ref{TWi1cor}, and using the hypothesis 
$\lambda_p(\psiTi(\Lambda))\geq \eta_p$ yields the inequalities of the lemma.
And the last statement follows just as in the case $\q=0$. 
\end{proof}
\begin{lemma}\label{m-est}
We have 
\begin{alignat*}1
|\m_{\F}|\ll (\Log T)^{\q}.
\end{alignat*}
\end{lemma}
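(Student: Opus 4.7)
The plan is to combine the containment $\F\subset B_0(2\log T)$ from (\ref{bFbound}) with Lemma \ref{mFbound}, which bounds the number of nonempty intersections of a bounded subset of $\Sigma$ with the fundamental cells $\Ci$ by a power of its bounding radius. The only mild nuisance is that Lemma \ref{mFbound} requires the bounding radius to be at least $1$, whereas when $T$ is close to $1$ we only get $\F\subset B_0(2\log T)$ with $2\log T<1$.

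Concretely, since $T\geq 1$ we have $\log T\geq 0$, and therefore $2\log T\leq 2\Log T$, where $\Log T=\max\{1,\log T\}\geq 1$. Hence by (\ref{bFbound}),
\begin{alignat*}1
\F\subset B_0(2\log T)\subset B_0(2\Log T),
\end{alignat*}
and the radius $r_{\F}:=2\Log T$ satisfies $r_{\F}\geq 2\geq 1$, so Lemma \ref{mFbound} applies to $\F$ (identifying $\Sigma$ with $\IR^{\q}$ via the basis $e_1,\ldots,e_{\q}$ from Section \ref{decomptransSFT}) and yields
\begin{alignat*}1
|\m_{\F}|\ll r_{\F}^{\q}=(2\Log T)^{\q}\ll (\Log T)^{\q},
\end{alignat*}
as required.

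There is no real obstacle here; the proof is a one-line deduction from the two preceding lemmas, with only a small bookkeeping step to pass from $\log T$ to $\Log T$ so that the hypothesis $r_F\geq 1$ of Lemma \ref{mFbound} is met in the regime $T$ close to $1$.
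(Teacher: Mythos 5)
Your proof is correct and follows exactly the paper's route: combining the bound $\F\subset B_0(2\log T)$ from (\ref{bFbound}) with Lemma \ref{mFbound}. The small extra step of passing to $\Log T$ to satisfy the hypothesis $r_F\geq 1$ is a sensible piece of bookkeeping that the paper leaves implicit.
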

\begin{proof}
This follows immediately from (\ref{bFbound}) and Lemma \ref{mFbound}.
\end{proof}
We can now easily conclude the proof of Theorem \ref{mainthm}.
Combining (\ref{partSF4}) and Lemma \ref{latticeptsest} with (\ref{ZIpart}) implies
\begin{alignat*}3
\left||\S_I\cap \Lambda|-\frac{\Vol \S_I}{\det\Lambda}\right|&\ll \sum_{\m_{\F}}\max_{0\leq p<dn}\frac{T^{p/d}}{\mun_1\cdots\mun_p},\\
\left||\S_I\cap \Lambda|-\frac{\Vol \S_I}{\det\Lambda}\right|&\ll \sum_{\m_{\F}}\frac{T^{n-1/d}}{\mun_1^{nd-1}}.
\end{alignat*}
And, if  $T^{1/d}<(1/\ka)\mun_1$, we have
\begin{alignat*}3
|\S_I\cap \Lambda|&=\sum_{\m_{\F}}0=0.
\end{alignat*}
Finally, we use Lemma \ref{m-est} to deduce
\begin{alignat*}1
\sum_{\m_{\F}}1\ll (\Log T)^{\q}.
\end{alignat*}
This proves Theorem \ref{mainthm}.

\section{Estimates for the successive minima}\label{estsuccmin}
In this section, we state the fact that the successive minima of the lattice $\phi\sigma\Oseen^n$ are bounded away from zero,
uniformly in $\phi \in \mathcal{T}$. We also state a crucial refinement involving a critical higher successive minimum
$\lambda_l$ and two other results.
All these results  are slight generalizations of those in 
\cite[Section 9]{art1} but they are proved by exactly the same arguments.
Therefore we skip the proofs  and simply state the lemmas. 

As in Section \ref{generalisation} let $K/k$ be an extension of number fields, and $d=[K:\IQ]$.
Recall that $\sigma_1,\ldots,\sigma_d$ denote 
the embeddings from $K$ to $K_i$, ordered such that
$\sigma_{r+s+i}=\overline{\sigma}_{r+i}$ for 
$1\leq i \leq s$.
We write
\begin{alignat}3\label{embedd1dim}
&\sigma:K\longrightarrow \prod_{i=1}^{r+s}{K_i}\\
\nonumber&\sigma(\alpha)=(\sigma_1(\alpha),\ldots,\sigma_{r+s}(\alpha)).
\end{alignat}
Let $\phi$ be as in (\ref{phidef}).
By abuse of notation we may regard $\phi$ also as an automorphism of $\IR^r\times \IC^s$, and 
from now on, depending on the argument, we view $\phi$ as an automorphism of $\IR^r\times \IC^s$ or $\IR^{rn}\times \IC^{sn}$. 
Applying $\phi$ to the lattice $\sigma\Oseen$ gives a new lattice  
$\phi\sigma\Oseen$ in $\IR^r\times \IC^s$.
As is well-known, see, e.g., \cite[Chapter VIII, Lemma 1]{18}, we can choose linearly independent 
vectors 
\begin{alignat*}3
v_1=\phi\sigma(\theta_1),\ldots,v_d=\phi\sigma(\theta_d)
\end{alignat*}
of the lattice $\phi\sigma\Oseen$ with 
\begin{alignat}3
\label{visuccmin}
|v_i|&=\lambda_i(\phi\sigma\Oseen) \qquad (1\leq i\leq d)
\end{alignat}
for the successive minima $\lambda_i(\phi\sigma\Oseen)$.
The $v_1,\ldots,v_d$ are $\IR$-linearly independent. Hence, $\theta_1,\ldots,\theta_d$ are $\IQ$-linearly 
independent, and therefore $\frac{\theta_1}{\theta_1},\ldots,\frac{\theta_d}{\theta_1}$
are $\IQ$-linearly independent. As $[K:\IQ]=d$ we get  $K=\IQ(\frac{\theta_1}{\theta_1},\ldots,\frac{\theta_d}{\theta_1})
=k(\frac{\theta_1}{\theta_1},\ldots,\frac{\theta_d}{\theta_1})$,
and this allows the following definition.
\begin{definition}\label{ldef}
Let  $l \in \{1,\ldots,d\}$ be minimal with 
$K=k(\frac{\theta_1}{\theta_1},\ldots,\frac{\theta_{l}}{\theta_1})$.
\end{definition}
We abbreviate 
\begin{alignat}3\label{lambdai}
\lambda_i=\lambda_i(\phi\sigma\Oseen)
\end{alignat}
for $1\leq i\leq d$. Recall the definition of $\eO$ from (\ref{defeO}).
\begin{lemma}\label{minest1}
We have
\begin{alignat*}3
\lambda_1 &\geq \sqrt{d/2}\eO.
\end{alignat*}
\end{lemma}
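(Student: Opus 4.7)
The strategy is to combine an arithmetic lower bound on the field norm of any nonzero $\alpha \in \Oseen$ with the weighted AM-GM inequality applied to the Minkowski embedding, in such a way that the condition $\det \phi = 1$ (i.e.\ $\prod_i \xi_i^{d_i}=1$) eliminates the dependence on $\phi$. Since every nonzero vector of $\phi\sigma\Oseen$ has the form $\phi\sigma(\alpha)$ for some nonzero $\alpha\in\Oseen$, it suffices to prove $|\phi\sigma(\alpha)|\geq \sqrt{d/2}\,\eO$ for all such $\alpha$.

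For the arithmetic step, fix any nonzero $\alpha \in \Oseen$. Since $\Oseen \subset \A_{\Oseen}$, the principal ideal $(\alpha)$ is contained in $\A_{\Oseen}$, so $\A_{\Oseen} \mid (\alpha)$, which forces $\N(\A_{\Oseen}) \mid |N_{K/\IQ}(\alpha)|$. Writing the field norm as a product over infinite places gives
\begin{equation*}
\prod_{i=1}^{r+s} |\sigma_i(\alpha)|^{d_i} \;=\; |N_{K/\IQ}(\alpha)| \;\geq\; \N(\A_{\Oseen}) \;=\; \eO^d.
\end{equation*}

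For the geometric step, recall that under the identification $\IR^r\times \IC^s\cong \IR^d$, the squared Euclidean norm of $\phi\sigma(\alpha)=(\xi_1\sigma_1(\alpha),\ldots,\xi_{r+s}\sigma_{r+s}(\alpha))$ equals $\sum_{i=1}^{r+s}\xi_i^2|\sigma_i(\alpha)|^2$. Rewriting this as $\sum_i d_i\bigl(\xi_i^2|\sigma_i(\alpha)|^2/d_i\bigr)$ and applying weighted AM--GM with weights $d_i$ (summing to $d$) yields
\begin{equation*}
|\phi\sigma(\alpha)|^2 \;\geq\; d\prod_{i=1}^{r+s}\!\left(\frac{\xi_i^2|\sigma_i(\alpha)|^2}{d_i}\right)^{\!d_i/d} \;=\; d\cdot 2^{-2s/d}\Bigl(\prod_i \xi_i^{d_i}\Bigr)^{\!2/d}\Bigl(\prod_i|\sigma_i(\alpha)|^{d_i}\Bigr)^{\!2/d},
\end{equation*}
where I used $\prod_i d_i^{d_i/d}=2^{2s/d}$. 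The factor $\prod_i \xi_i^{d_i}$ equals $1$ by (\ref{phidet}), and combining with the arithmetic bound gives $|\phi\sigma(\alpha)|^2\geq d\cdot 4^{-s/d}\eO^2$.

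Finally, because $d=r+2s\geq 2s$ we have $s/d\leq 1/2$, so $4^{-s/d}\geq 1/2$, hence $|\phi\sigma(\alpha)|\geq \sqrt{d/2}\,\eO$. Taking the infimum over nonzero $\alpha\in\Oseen$ produces the claimed bound on $\lambda_1$. There is no real obstacle here: the only place where any care is needed is in correctly identifying the weights $d_i$ in the AM--GM step so that the weight of the $\xi_i$ factors cancels precisely with the determinant condition.
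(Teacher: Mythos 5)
Your proof is correct and follows essentially the same route as the paper: weighted AM--GM on the squared Euclidean norm, cancellation of the $\xi_i$ via $\prod_i\xi_i^{d_i}=1$, and the lower bound $|N_{K/\IQ}(\alpha)|\geq \N(\A_{\Oseen})=\eO^d$ coming from $\A_{\Oseen}\mid(\alpha)$. The only cosmetic difference is where the factor $1/2$ appears (the paper first bounds $\sum_i|\xi_i\sigma_i\alpha|^2\geq\tfrac12\sum_i d_i|\xi_i\sigma_i\alpha|^2$, whereas you carry the weights $d_i$ through AM--GM and bound $4^{-s/d}\geq 1/2$ at the end).
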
 
\begin{lemma}\label{lemma2.7}
With $K_0=k(\frac{\theta_1}{\theta_1},\ldots,\frac{\theta_{l-1}}{\theta_1})$ if $l\geq 2$ and $K_0=k$ if $l=1$, and $g=[K_0:k]\in G(K/k)$ we have 
\begin{alignat*}3
\lambda_l &\geq \frac{1}{\sqrt{2}ed}\eO\delta_g(K/k).
\end{alignat*}
\end{lemma}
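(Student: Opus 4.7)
The plan is to exhibit a pair $(\alpha,\beta)\in K^2$ with $k(\alpha,\beta)=K$, $[k(\alpha):k]=g$, and
$H(\alpha,\beta)\le \sqrt{2}\,e\,d\,\lambda_l/\eO$; the infimum definition of $\delta_g(K/k)$ will then yield the lemma. Take $\beta:=\theta_l/\theta_1$, so that $K=K_0(\beta)$ by the minimality of $l$, and take $\alpha$ to be a primitive element of $K_0/k$ constructed as an integer linear combination $\alpha=\sum_{i=2}^{l-1} c_i\,(\theta_i/\theta_1)$ with $|c_i|$ bounded in terms of $g$ alone. Such a primitive $\alpha$ exists by a standard hyperplane-avoidance argument: the at-most $\binom{g}{2}$ pairs of distinct $k$-embeddings of $K_0$ impose that many linear constraints on $(c_i)\in\IZ^{l-2}$, which can be avoided in a suitable integer box. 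With this choice, $[k(\alpha):k]=g$ and $k(\alpha,\beta)=K_0(\beta)=K$, hence $H(\alpha,\beta)\ge\delta_g(K/k)$. The degenerate case $l=1$ forces $K=k$ and $\delta_1(K/k)\le 1$, and the bound then follows from Lemma \ref{minest1}.

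To bound $H(\alpha,\beta)$ from above, use projective invariance of the Weil height under scaling by $\theta_1\in K^*$ to rewrite
\[
H(\alpha,\beta)=H\bigl([\theta_1:\theta_1\alpha:\theta_l]\bigr).
\]
All three projective coordinates lie in $\Oseen\subset\A_\Oseen$ (for $\theta_1\alpha=\sum c_i\theta_i$ this uses $c_i\in\IZ$), so the ideal $\A':=(\theta_1,\theta_1\alpha,\theta_l)\subset\Oseen_K$ satisfies $\A'\subset\A_\Oseen$, equivalently $\A_\Oseen\mid\A'$, yielding $\N(\A')\ge\N(\A_\Oseen)=\eO^d$. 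Hence the non-Archimedean contribution $\N(\A')^{-1/d}$ is at most $1/\eO$. For the Archimedean part, the identity $|\phi\sigma\theta_p|^2=\sum_i\xi_i^2|\sigma_i\theta_p|^2$ gives $\xi_i|\sigma_i\theta_p|\le\lambda_p\le\lambda_l$ for $p\le l$, so $|\sigma_i\theta_1|,|\sigma_i\theta_l|\le\lambda_l/\xi_i$ and $|\sigma_i(\theta_1\alpha)|\le(\sum_j|c_j|)\lambda_l/\xi_i$ by the triangle inequality. Taking the maximum of the three at each place, raising to $d_i/d$, multiplying, and using $\sum d_i=d$ together with $\prod\xi_i^{d_i}=1$, the Archimedean contribution is bounded by $C\lambda_l$ for a constant $C$ depending only on $\max_j|c_j|$ and $l$. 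Combined, $H(\alpha,\beta)\le C\lambda_l/\eO$.

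The main technical hurdle is arranging $C\le\sqrt{2}\,e\,d$. This requires the primitive-element coefficients $|c_j|$ to be bounded by a suitable function of $g\le e$, together with the crude bound $l\le d$, and an AM--GM refinement of the Archimedean step mirroring the factor $\sqrt{d/2}$ appearing in Lemma \ref{minest1}. The detailed accounting is exactly the one carried out in \cite[Section 9]{art1}; the only genuine change in the present setting is that $\N(\A_\Oseen)=\eO^d$ replaces the normalization $\N(\Oseen_K)=1$ used there, which is precisely the source of the extra factor $\eO$ in the final bound.
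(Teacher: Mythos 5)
Your argument is essentially the paper's: exhibit a witness pair for the infimum defining $\delta_g(K/k)$ as bounded integer combinations of the $\theta_i/\theta_1$, clear denominators by $\theta_1$, bound the finite part by $\eO^{-1}$ via $\theta_i\in\A_{\Oseen}$, and bound the infinite part by a multiple of $\lambda_l$ using $\xi_i|\sigma_i\theta_p|\leq |v_p|=\lambda_p\leq\lambda_l$ together with $\prod_i\xi_i^{d_i}=1$ (your choice $\beta=\theta_l/\theta_1$ in place of a primitive element of $K/k$ is a harmless simplification, since $K_0(\theta_l/\theta_1)=K$ by the minimality of $l$). The one loose end is the numerical constant: avoiding $\binom{g}{2}$ hyperplanes in an integer box only gives coefficients of size about $\binom{g}{2}\leq e^2/4$, so your route yields $\lambda_l\geq c(d)\,\eO\,\delta_g(K/k)$ for some positive $c(d)$ rather than the stated $\tfrac{1}{\sqrt{2}ed}$ — this is immaterial for every use of the lemma in the paper, but to recover the stated constant one needs the sharper coefficient bound $0\leq m_i<e$ from the standard primitive-element construction, not the crude box-counting estimate.
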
 
For the rest of this section we assume that
\begin{alignat*}3
n>1.
\end{alignat*}
\begin{lemma}\label{lemma2.5}
Let $(\omega_1,\ldots,\omega_n)$ be in $\Oseen^n\backslash\{\0\}$ 
with $k(\ldots,\omega_i/\omega_j,\ldots)=K$. Then for  $v=(\phi\sigma\omega_1,\ldots,\phi\sigma\omega_n)$
we have
\begin{alignat*}3
|v| \geq \lambda_l.
\end{alignat*}
\end{lemma}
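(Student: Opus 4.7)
The plan is to exhibit a single coordinate $\phi\sigma\omega_a$ of $v$ whose length is already at least $\lambda_l$, from which $|v|\geq |\phi\sigma\omega_a|\geq \lambda_l$ follows. The argument splits naturally according to whether $l=1$ or $l\geq 2$.

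First consider the easy case $l=1$, which by Definition \ref{ldef} is equivalent to $K=k$. Since $(\omega_1,\ldots,\omega_n)\neq \mathbf{0}$, some coordinate $\omega_a$ is nonzero. The map $\sigma$ is injective and $\phi$ is a bijection on $\prod_i K_i$, so $\phi\sigma\omega_a$ is a nonzero element of $\phi\sigma\mathcal{O}$, hence $|\phi\sigma\omega_a|\geq \lambda_1=\lambda_l$.

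Now suppose $l\geq 2$. By Lemma \ref{lemma2.4} applied to $(\omega_1,\ldots,\omega_n)$, there is an index $a\in\{1,\ldots,n\}$ with
\[
\omega_a \notin k\theta_1+\cdots+k\theta_{l-1}.
\]
Since $\theta_1,\ldots,\theta_d$ form a $\mathbb{Q}$-basis of $K$, I can write uniquely
\[
\omega_a=\mu_1\theta_1+\cdots+\mu_d\theta_d, \qquad \mu_j\in\mathbb{Q}.
\]
If $\mu_l=\mu_{l+1}=\cdots=\mu_d=0$, then $\omega_a\in\mathbb{Q}\theta_1+\cdots+\mathbb{Q}\theta_{l-1}\subset k\theta_1+\cdots+k\theta_{l-1}$, contradicting the choice of $a$. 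Hence the largest index $j_0$ with $\mu_{j_0}\neq 0$ satisfies $j_0\geq l$.

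Applying $\phi\sigma$ (which is $\mathbb{Q}$-linear on $K$) and recalling $v_j=\phi\sigma\theta_j$, we obtain
\[
\phi\sigma\omega_a=\mu_1 v_1+\cdots+\mu_{j_0}v_{j_0}\in \phi\sigma\mathcal{O}.
\]
Lemma \ref{lemma2.3}, with $a$ replaced by $j_0$, then yields $|\phi\sigma\omega_a|\geq \lambda_{j_0}\geq \lambda_l$, since the successive minima are non-decreasing. As $\phi\sigma\omega_a$ is one block of the vector $v$, we conclude $|v|\geq |\phi\sigma\omega_a|\geq \lambda_l$, completing the argument. The only real content is the extraction of a ``tail index'' $j_0\geq l$ in the expansion of $\omega_a$; once Lemmas \ref{lemma2.3} and \ref{lemma2.4} are in hand there is no serious obstacle.
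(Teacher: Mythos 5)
Your proof is correct and follows essentially the same route as the paper's: express the relevant coordinate in terms of $v_1,\ldots,v_d$, use Lemma \ref{lemma2.4} to guarantee a nonzero coefficient with index at least $l$, and invoke Lemma \ref{lemma2.3}. The explicit treatment of $l=1$ is a harmless addition (the paper absorbs that case implicitly, since for $l=1$ the existence of a nonzero coefficient is automatic from $v\neq\0$).
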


We remind the reader that $[K:k]=e$, $[k:\IQ]=m$, and $d=em$.

\begin{lemma}\label{lemma2.6}
If $l\geq 2$ then 
\begin{alignat*}3
\frac{l-1}{m}\leq [k\left(\frac{\theta_1}{\theta_1},\ldots,\frac{\theta_{l-1}}{\theta_1}\right):k]
\leq \max\{1,e/2\}.
\end{alignat*}
\end{lemma}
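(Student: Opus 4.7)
The plan is to set $K_0 = k(\theta_1/\theta_1,\ldots,\theta_{l-1}/\theta_1)$ and establish the two inequalities separately. First, I would observe that $l\geq 2$ forces $K\neq k$, since $l=1$ would give $K = k(\theta_1/\theta_1) = k(1) = k$. Consequently $e = [K:k] \geq 2$, so that $\max\{1,e/2\} = e/2$, and the target upper bound reduces to $[K_0:k]\leq e/2$.

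The upper bound is immediate from the minimality of $l$ in Definition \ref{ldef}: by that minimality we have $K_0 \subsetneq K$, hence $[K:K_0]\geq 2$. The tower law then yields
\begin{alignat*}{1}
[K_0:k] = \frac{e}{[K:K_0]}\leq \frac{e}{2}.
\end{alignat*}

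For the lower bound I would argue by counting dimensions over $\IQ$. From (\ref{visuccmin}) the vectors $v_1,\ldots,v_{l-1}$ are $\IR$-linearly independent, so (exactly as in the paragraph preceding Definition \ref{ldef}) $\theta_1,\ldots,\theta_{l-1}$ are $\IQ$-linearly independent in $K$. Since multiplication by $1/\theta_1\in K^\times$ is a $\IQ$-linear automorphism of $K$, the elements $\theta_1/\theta_1,\ldots,\theta_{l-1}/\theta_1$ are likewise $\IQ$-linearly independent, and they all lie in $K_0$. Therefore $[K_0:\IQ]\geq l-1$, and dividing by $[k:\IQ]=m$ gives $[K_0:k]\geq (l-1)/m$.

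There is no genuine obstacle here: the lemma is essentially a direct consequence of the minimality defining $l$ (for the upper bound) and of counting $\IQ$-dimensions of the sub-$k$-algebra generated by a $\IQ$-independent set (for the lower bound). The only minor point worth stating explicitly is the reduction $\max\{1,e/2\}=e/2$ under the hypothesis $l\geq 2$.
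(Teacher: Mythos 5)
Your proof is correct and follows the same route as the paper's: the lower bound comes from the $\IQ$-linear independence of $\theta_1/\theta_1,\ldots,\theta_{l-1}/\theta_1$ (hence $[K_0:\IQ]\geq l-1$), and the upper bound from the minimality of $l$, which forces $K_0\subsetneq K$ and so $[K:K_0]\geq 2$. Your explicit observations that $l\geq 2$ implies $e\geq 2$ (so $\max\{1,e/2\}=e/2$) and that multiplication by $1/\theta_1$ preserves $\IQ$-linear independence are details the paper leaves implicit, but the argument is the same.
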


\section{Upper bounds for the projectively non-primitive points}\label{nonprimpts}
We extend the embeddings $\sigma_i$ from (\ref{embedd1dim}) componentwise to get an embedding of $K^n$
\begin{alignat*}3
\sigma:K^n\longrightarrow \prod_{i=1}^{r+s}{K_i^n}.
\end{alignat*}
Depending on the argument we either see $\sigma$ as a map on $K$ or on $K^n$.
Again, let $\phi$ be as in (\ref{phidef}).
In this section we prove an upper bound for the number of nonzero points in $\phi\sigma\Oseen^n$ that (as projective points) do not generate $K/k$
and lie in some ball. For brevity we write
\begin{alignat*}1
\Lambda'=\phi\sigma\Oseen^n\backslash(\phi\sigma\Oseen^n(K/k)\cup \{\0\}).
\end{alignat*}
\begin{lemma}\label{nonprojprimpts}
Suppose $n>1$,
let $B_0(R)$ be the zero centered ball in the Euclidean space $\IR^{nr}\times \IC^{ns}$ of radius $R$, and let $\lambda_i$ be as in (\ref{lambdai}). Then
\begin{alignat*}1
|\Lambda'\cap B_0(R)|\ll \max_{0\leq i\leq d}\frac{R^i}{\lambda_1\cdots\lambda_i}\left(\max_{0\leq i<d}\frac{R^i}{\lambda_1\cdots\lambda_i}\right)^{n-1}.
\end{alignat*}
\end{lemma}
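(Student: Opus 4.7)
The plan is to parameterize each $\valpha\in\Lambda'$ by singling out a nonzero ``pivot'' coordinate $\alpha_j$ and exploiting the fact that the ratios $\alpha_i/\alpha_j$ must then lie in a proper intermediate field $K_0$ with $k\subseteq K_0\subsetneq K$. This forces the remaining $n-1$ coordinates to live in a sublattice of $\Oseen$ of $\IZ$-rank $[K_0:\IQ]\le d/2$, which is precisely what will produce the strict inequality $i<d$ in the $(n-1)$-fold factor of the bound. We may assume $K\ne k$, else $\Lambda'$ is empty and there is nothing to prove. A union bound over the pivot index $j$, combined with the coordinate-permutation symmetry of $\Oseen^n$, reduces the task to bounding $|\{\valpha\in\Lambda'\cap B_0(R):\alpha_1\ne 0\}|$; a further union bound over the finitely many (bounded in terms of $e$) intermediate fields $K_0\subsetneq K$ reduces further to fixing one such $K_0$ and counting $\valpha$ with $\alpha_1\in\Oseen\setminus\{0\}$, with $\alpha_i\in\alpha_1 K_0\cap\Oseen$ for $i\ge 2$, and with $|\phi\sigma\alpha_i|\le R$ for each $i$ (this last being implied by $\valpha\in B_0(R)$).

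Setting $e_0:=[K:K_0]\ge 2$, the $\IQ$-subspace $\alpha_1 K_0\subset K$ has $\IQ$-dimension $d/e_0\le d/2\le d-1$. Since $\Oseen\supseteq M\Oseen_K$ for some positive integer $M$, the intersection $\alpha_1 K_0\cap\Oseen$ is a full-rank $\IZ$-lattice of rank $d/e_0$ in $\alpha_1 K_0$, uniformly in $\alpha_1$. Hence $\Lambda_0:=\phi\sigma(\alpha_1 K_0\cap\Oseen)$ is a full-rank lattice of rank $d/e_0$ in the real subspace $V$ of $\IR^r\times\IC^s$ it spans; being a sublattice of $\phi\sigma\Oseen$, its successive minima satisfy $\lambda_i(\Lambda_0)\ge\lambda_i$ for $1\le i\le d/e_0$.

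Theorem \ref{TWi1} is then applied twice. Applied to $\phi\sigma\Oseen$ and the ball $B_0(R)\subset\IR^r\times\IC^s$ (whose boundary lies in Lip$(d,M,L)$ with $M\ll 1$ and $L\ll R$), it gives $\ll\max_{0\le i\le d}R^i/(\lambda_1\cdots\lambda_i)$ choices for $\alpha_1$. For each such $\alpha_1$ and each $i\ge 2$, applying Theorem \ref{TWi1} inside $V$---on which the restriction of $B_0(R)$ is a Euclidean ball of radius $\le R$ with Lipschitz-parameterizable boundary at scale $\ll R$---to the full-rank lattice $\Lambda_0$ yields
$$
\bigl|\{\alpha_i\in\alpha_1 K_0\cap\Oseen:|\phi\sigma\alpha_i|\le R\}\bigr|\ll\max_{0\le i\le d/e_0}\frac{R^i}{\lambda_1\cdots\lambda_i}\le\max_{0\le i<d}\frac{R^i}{\lambda_1\cdots\lambda_i}.
$$
Multiplying the first estimate by $n-1$ independent copies of the second and absorbing the finite sums over pivot index and over $K_0$ into the implicit constant will yield the claimed bound. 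The main obstacle lies in the sublattice step---verifying full-rankness of $\alpha_1 K_0\cap\Oseen$ uniformly in $\alpha_1$, and the domination $\lambda_i(\Lambda_0)\ge\lambda_i$ for the embedded sublattice under the Euclidean metric inherited from $\IR^r\times\IC^s$---because it is exactly this domination, together with the rank bound $d/e_0\le d-1$, that produces the crucial $i<d$ in place of $i\le d$ in the $(n-1)$-fold factor.
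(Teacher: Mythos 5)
Your argument is correct, and its skeleton matches the paper's exactly: reduce to a nonzero pivot coordinate $\alpha_1$ at the cost of a factor $n$, take a union bound over the finitely many proper intermediate fields $K_0$ containing all the ratios, bound the number of pivots by $\max_{0\le i\le d}R^i/(\lambda_1\cdots\lambda_i)$ via Theorem \ref{TWi1} and Minkowski's second theorem, and get the strict cutoff $i<d$ for each of the remaining $n-1$ coordinates. The one genuinely different ingredient is the mechanism producing that strict cutoff. The paper chooses two distinct embeddings $\sigma_a,\sigma_b$ of $K$ agreeing on $K_0$; the relation $\sigma_a(\omega_j/\omega_1)=\sigma_b(\omega_j/\omega_1)$ then confines each $\phi\sigma\omega_j$ ($j\ge 2$) to a hyperplane $\phi\P(\omega_1)$ of $\IR^d$, and Theorem \ref{TWi1} applied to the \emph{full} lattice $\phi\sigma\Oseen$ and the measure-zero slice (hyperplane intersected with $B_0(R)$, which lies in Lip$(d,1,2R)$) kills the volume term, leaving only $\max_{0\le i<d}$. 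You instead pass to the rank-$(d/e_0)$ sublattice $\phi\sigma(\alpha_1K_0\cap\Oseen)$ and count inside its real span, so the rank drop $d/e_0\le d/2<d$ caps the exponent. Both routes are sound; yours even yields the slightly sharper range $i\le d/2$ per factor (not needed here), but it requires the extra bookkeeping you correctly flag — full-rankness of $\alpha_1K_0\cap\Oseen$ in $\alpha_1K_0$ uniformly in $\alpha_1$, domination $\lambda_i(\Lambda_0)\ge\lambda_i$ for the sublattice, and Minkowski's second theorem to absorb $\Vol Z/\det\Lambda_0$ into the maximum — all of which do hold. The paper's hyperplane slice sidesteps any dependence of the auxiliary object on $\alpha_1$ and any discussion of sublattice minima, which is why it is the shorter write-up.
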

\begin{proof}
We follow the lines of proof in \cite[Proposition 10.1]{art1}.
For $(\phi\sigma\omega_1,\ldots,\phi\sigma\omega_n)$ in $\Lambda'$
the field $k(\ldots,\omega_i/\omega_j,\ldots)$ lies in a strict subfield, say $K_1$, of $K$.
Hence, there exist two different embeddings 
$\sigma_a, \sigma_b$ of $K$ with 
\begin{alignat*}3
\sigma_a\alpha=\sigma_b\alpha
\end{alignat*}
for all $\alpha$ in $K_1$.
Now $(\phi\sigma\omega_1,\ldots,\phi\sigma\omega_n)\neq \0$, and
thus, at least one of the numbers $\omega_1,\ldots,\omega_n$ is nonzero.
By symmetry we lose only a factor $n$ if we assume 
$\omega_1 \neq 0$. So let us temporarily regard $\omega_1 \neq 0$ as fixed;
then for $2\leq j \leq n$ every $\omega_j$ satisfies
\begin{alignat*}3
\sigma_a\frac{\omega_j}{\omega_1}=\sigma_b\frac{\omega_j}{\omega_1}.
\end{alignat*}
Therefore, all these $\sigma \omega_j$ lie 
in a hyperplane $\P(\omega_1)$ of $\IR^d$, and so all these $\phi\sigma \omega_j$ lie in the hyperplane $\phi\P(\omega_1)$.
As $(\phi\sigma\omega_1,\ldots,\phi\sigma\omega_n)\in B_0(R)$ we have $|\phi\sigma \omega_j|\leq R$.
The intersection of a ball with radius $R$ 
and a hyperplane in $\IR^d$ is a ball in some
$\IR^{d-1}$ with radius $R'\leq R$ and thus, lies in a cube of edge length $2R$. 
Thus, this set belongs to the class Lip$(d,1,2R)$.
Moreover, its $d$-dimensional volume is zero. Hence, by
Theorem \ref{TWi1cor} we obtain the upper
bound
\begin{alignat*}3
\ll \max_{0\leq i<d}\frac{R^i}{\lambda_1\cdots\lambda_i}
\end{alignat*}
for the number of $\phi\sigma\omega_j$ for each $j$ satisfying $2\leq j\leq n$.\\

Next we have to estimate the number of $\phi\sigma\omega_1$. Again, we 
have $|\phi\sigma \omega_1|\leq R$.
Now by virtue of Theorem \ref{TWi1cor} we
deduce the following upper bound
\begin{alignat*}3
\ll \frac{R^d}{\det \phi\sigma\Oseen}+
\max_{0\leq i < d}\frac{R^i}{\lambda_1\cdots\lambda_i}
\end{alignat*}
for the number of  $\phi\sigma\omega_1$.
Going right up to the last minimum, we see that this is bounded by
\begin{alignat*}3
\ll \max_{0\leq i\leq d}\frac{R^i}{\lambda_1\cdots\lambda_i}.
\end{alignat*}
Multiplying the bounds for the number of $\phi\sigma\omega_1$ and 
$\phi\sigma\omega_j$, and then summing over all (of the at most $2^d$) strict subfields $K_1$ of $K$
leads to  
\begin{alignat*}3
|\Lambda'|\ll \max_{0\leq i\leq d}\frac{R^i}{\lambda_1\cdots\lambda_i}\left(\max_{0\leq i<d}\frac{R^i}{\lambda_1\cdots\lambda_i}\right)^{n-1}.
\end{alignat*}
This completes the proof.
\end{proof}

\section{Counting projectively primitive points}\label{estproprimpts}
The height of an element $\valpha=(\alpha_1,\ldots,\alpha_n)\in \Oseen^n\subset \Oseen_K^n$ is given by 
\begin{alignat*}1
H(\valpha)=\prod_{i=1}^{r+s}|(1,\sigma_i(\valpha))|_{\infty}^{d_i/d}.
\end{alignat*}
Therefore, and by the definition (\ref{defSI}) of $\S_I(X^d)$, we have
\begin{alignat}1\label{Naslatticepts}
N(\Oseen_I^n(K/k),X)=|\S_I(X^d)\cap \sigma\Oseen^n(K/k)|.
\end{alignat}
Recall the definitions of $\SiF$, $\psi$, $\psiTi$ and $\F$ from (\ref{defSiF}), (\ref{defpsi}), (\ref{defpsiTi}) and 
(\ref{defF0}). Also recall that $\q=|I|-1$. We permute the coordinates of $\sigma\Oseen^n$ and $\sigma\Oseen^n(K/k)$
as in (\ref{SITcartprod}), so that they become subsets of $\prod_{I}K_i^n\times \prod_{I^c}K_i^n$.
Just as in (\ref{partSF4}) we conclude
\begin{alignat}1\label{primptsdecom}
|\sigma\Oseen^n(K/k)\cap \S_I(T)|=
 \begin{cases}
   |\psi \S_I(T)\cap \psi\sigma\Oseen^n(K/k)|     & \text{if } \q = 0 \\
   \sum_{\m_{\F}}|\psiTi\SibF\cap \psiTi\sigma\Oseen^n(K/k)| & \text{if } \q > 0
  \end{cases}.
\end{alignat}
Of course, the first equation in (\ref{primptsdecom}) holds always, although we use it only for $\q=0$.
It is well known that $\sigma\Oseen^n$ is a lattice of determinant
\begin{alignat*}3
\det\sigma\Oseen^n=(2^{-s}\sqrt{|\Delta_K|}[\Oseen_K:\Oseen])^n.
\end{alignat*}

\begin{proposition}\label{mainprop}
Suppose $T\geq 1$ and $n>1$, and recall that $l$ was defined in Definition \ref{ldef} (Section \ref{estsuccmin}). If $\q=0$ then we have
\begin{alignat*}1
\left| |\psi\sigma\Oseen^n(K/k)\cap \psi \S_I(T)|  -\frac{2^{s_Kn}\Vol \S_I(T)}{(\sqrt{|\Delta_K|}[\Oseen_K:\Oseen])^n}\right|
\ll \frac{T^{n-1/d}}{\lambda_1^{n(l-1)}\lambda_l^{n(d-l+1)-1}},
\end{alignat*}
where $\lambda_i=\lambda_i(\psi\sigma\Oseen)$. If $\q>0$ then we have
\begin{alignat*}1
\left||\psiTi\sigma\Oseen^n(K/k)\cap \psiTi\SibF|-\frac{2^{s_Kn}\Vol \SibF}{(\sqrt{|\Delta_K|}[\Oseen_K:\Oseen])^n}\right|
\ll \frac{T^{n-1/d}}{\lambda_1^{n(l-1)}\lambda_l^{n(d-l+1)-1}},
\end{alignat*}
where $\lambda_i=\lambda_i(\psiTi\sigma\Oseen)$.
\end{proposition}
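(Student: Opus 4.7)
I will route the argument through the auxiliary lattice $\Lambda := \psiTi\sigma\Oseen^n$ (respectively $\psi\sigma\Oseen^n$ when $\q=0$). Since $\psi$ and $\Ti$ both lie in $\mathcal{T}$, they preserve volume, so $\det\Lambda = (\det\sigma\Oseen)^n = (2^{-s_K}\sqrt{|\Delta_K|}[\Oseen_K:\Oseen])^n$. Because $\Lambda$ is the $n$-fold orthogonal direct sum of $\psiTi\sigma\Oseen$, its successive minima $\mu_1\leq\cdots\leq\mu_{nd}$ satisfy $\mu_{n(i-1)+1}=\cdots=\mu_{ni}=\lambda_i$, and hence $\mu_a=\lambda_l$ for $a:=n(l-1)+1$. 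Now Lemma~\ref{lemma2.5} tells us that every nonzero $v\in\psiTi\sigma\Oseen^n(K/k)$ has $|v|\geq\lambda_l$, and $\mathbf{0}\notin\psiTi\SibF$ since $|\z_i|_\infty\geq 1$ for $i\in I$. Setting $\Lambda':=\Lambda\setminus(\psiTi\sigma\Oseen^n(K/k)\cup\{\0\})$, this gives the identity
\[
|\psiTi\sigma\Oseen^n(K/k)\cap\psiTi\SibF|=|\Lambda(a)\cap\psiTi\SibF|-|\Lambda'\cap\psiTi\SibF\cap\{|v|\geq\lambda_l\}|,
\]
with $\Lambda(a)$ as in (\ref{Lambda_a}); the analogous identity holds in the $\q=0$ case with $\psi\S_I(T)$ replacing $\psiTi\SibF$.

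The first term on the right is estimated by Corollary~\ref{TWi1cor}. Its hypotheses are in place: Lemma~\ref{LipparpsiTiSibF} (respectively Lemma~\ref{Lippsi} in the $\q=0$ case) supplies $\partial\psiTi\SibF\in\mathrm{Lip}(dn,M,L)$ with $M\ll 1$ and $L\ll T^{1/d}$, while Lemma~\ref{superballlemma} gives $\psiTi\SibF\subset B_0(\ka T^{1/d})$. The main term produced is
\[
\frac{\Vol\psiTi\SibF}{\det\Lambda}=\frac{\Vol\SibF}{\det\Lambda}=\frac{2^{s_Kn}\Vol\SibF}{(\sqrt{|\Delta_K|}[\Oseen_K:\Oseen])^n},
\]
and the corollary's error bound is
\[
\ll\frac{(\ka T^{1/d})^{nd-1}}{\mu_1^{a-1}\mu_a^{nd-a}}=\frac{T^{n-1/d}}{\lambda_1^{n(l-1)}\lambda_l^{n(d-l+1)-1}},
\]
matching the target exactly.

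For the non-primitive correction I invoke Lemma~\ref{nonprojprimpts} with $R=\ka T^{1/d}$. If $\lambda_l>R$ then $\psiTi\SibF\cap\{|v|\geq\lambda_l\}=\emptyset$ and the term vanishes; otherwise the monotonicity inequality $\lambda_1\cdots\lambda_i\geq\lambda_1^{\min(i,l-1)}\lambda_l^{\max(0,\,i-l+1)}$ lets me bound each $P_i:=R^i/(\lambda_1\cdots\lambda_i)$. The assumption $\lambda_l\leq R$ forces the maximum over $P_i$ and over $P_j^{n-1}$ to be attained in the regime $i,j\geq l-1$; a short computation shows the worst product arises at $i=d$, $j=d-1$, giving
\[
\max_{0\leq i\leq d}P_i\cdot\Bigl(\max_{0\leq j<d}P_j\Bigr)^{n-1}\ll\frac{R^{nd-n+1}}{\lambda_1^{n(l-1)}\lambda_l^{n(d-l)+1}}=\frac{R^{nd-1}}{\lambda_1^{n(l-1)}\lambda_l^{n(d-l+1)-1}}\cdot\Bigl(\tfrac{\lambda_l}{R}\Bigr)^{n-2},
\]
which is absorbed into the target error since $(\lambda_l/R)^{n-2}\leq 1$.

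The chief obstacle is the mismatch between Lemma~\ref{nonprojprimpts}, whose bound reflects the \emph{entire} profile of successive minima of $\Lambda$, and the proposition's error term, which involves only $\lambda_1$ and the critical $\lambda_l$. The case split on $\lambda_l$ versus $R$ is essential: in the geometrically ``thin'' regime $\lambda_l>R$ the lattice bound is too weak and we must appeal to the vacuity of the intersection, while in the complementary regime monotonicity of the successive minima is precisely what collapses the intermediate $\lambda_i$ into the required powers of $\lambda_1$ and $\lambda_l$.
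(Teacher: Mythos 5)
Your proposal is correct and follows essentially the same route as the paper: the same reduction to Corollary \ref{TWi1cor} applied to the superlattice points of norm at least $\lambda_l$ (the paper writes this as $\Lambda(l)$, while you make the index $a=n(l-1)+1$ in the rank-$nd$ lattice explicit), the same use of Lemma \ref{lemma2.5} to place the primitive points there, and the same correction via Lemma \ref{nonprojprimpts} with the case split $R<\lambda_l$ versus $R\geq\lambda_l$ and the absorption of the factor $(\lambda_l/R)^{n-2}\leq 1$ using $n>1$. The only difference is organizational (you case-split inside a single counting identity rather than at the top level), which does not change the substance.
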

\begin{proof}
As the case $\q=0$ can be proven by exactly the same arguments we restrict ourselves to the case $\q>0$. 
Let us write $R=\ka T^{1/d}$, where $\ka$ is as in Lemma \ref{superballlemma}, and thus $R\ll T^{1/d}$, and
\begin{alignat*}1
\psiTi\SibF\subset B_0(R).
\end{alignat*}
Put $\Lambda=\psiTi\sigma\Oseen^n$, and recall that $\psiTi\in \mathcal{T}$. The proof splits in two cases.
First we assume
\begin{alignat*}1
R<\lambda_l.
\end{alignat*}
By Lemma \ref{lemma2.5}, and recalling the definition (\ref{Lambda_a}), we conclude $\psiTi\sigma\Oseen^n(K/k)\subset \Lambda(l)$. 
As $\psiTi\SibF\subset B_0(R)$ we get in particular 
$0=|\Lambda(l)\cap \psiTi\SibF|=|\psiTi\sigma\Oseen^n(K/k)\cap \psiTi\SibF|$. Using Lemma \ref{LipparpsiTiSibF}, $\det \psiTi=1$,
and applying Corollary \ref{TWi1cor} proves the proposition in the first case.
Now we assume
\begin{alignat*}1
R\geq \lambda_l.
\end{alignat*}
First we ignore the primitivity condition defining $\Oseen^n(K/k)$ and we count all points in 
$\Lambda(l) \supset \psiTi\sigma\Oseen^n(K/k)$.
Again, using Lemma \ref{LipparpsiTiSibF} and applying Corollary \ref{TWi1cor} yields
\begin{alignat*}1
\left|| \Lambda(l)\cap \psiTi(\SibF)|-\frac{2^{s_Kn}\Vol(\SibF)}{(\sqrt{|\Delta_K|}[\Oseen_K:\Oseen])^n}\right|\ll 
\frac{T^{n-1/d}}{\lambda_1^{n(l-1)}\lambda_l^{n(d-l+1)-1}}.
\end{alignat*}
Next we estimate the number of points in $\Lambda(l)\cap\psiTi(\SibF)$ that do not generate $K/k$ (in the projective sense),
i.e., that do not lie in $\psiTi\sigma\Oseen^n(K/k)$. To this end we apply Lemma \ref{nonprojprimpts}. 
Using $R\geq \lambda_l$ we get the following upper bound for these
\begin{alignat*}1
\ll \max_{0\leq i\leq d}\frac{R^i}{\lambda_1\cdots\lambda_i}\left(\max_{0\leq i<d}\frac{R^i}{\lambda_1\cdots\lambda_i}\right)^{n-1}
\leq \frac{R^{d}}{\lambda_1^{l-1}\lambda_l^{d-l+1}}\left(\frac{R^{d-1}}{\lambda_1^{l-1}\lambda_l^{d-l}}\right)^{n-1}.
\end{alignat*}
As $n>1$ we see that the latter is
\begin{alignat*}1
\leq \frac{R^{dn-1}}{\lambda_1^{n(l-1)}\lambda_l^{n(d-l+1)-1}}\ll \frac{T^{n-1/d}}{\lambda_1^{n(l-1)}\lambda_l^{n(d-l+1)-1}}.
\end{alignat*}
This concludes the proof of the proposition.
\end{proof}

Recall the definitions of $\eO$ and $\mug$ from (\ref{defeO}) and (\ref{defmug}) respectively.
\begin{lemma}\label{lemmamainest}
Suppose $X\geq 1$ and $n>1$. If $\q=0$ then
\begin{alignat*}1
\left||\psi\sigma\Oseen^n(K/k)\cap \psi \S_I(T)| -\frac{2^{s_Kn}\Vol \S_I(T)}{(\sqrt{|\Delta_K|}[\Oseen_K:\Oseen])^n}\right|
\ll \sum_{g\in G(K/k)}\frac{T^{n-1/d}}{\eO^{dn-1}\delta_g(K/k)^{\mug}}.
\end{alignat*}
If $\q>0$ then
\begin{alignat*}1
\left||\psiTi\sigma\Oseen^n(K/k)\cap \psiTi\SibF|-\frac{2^{s_Kn}\Vol \SibF}{(\sqrt{|\Delta_K|}[\Oseen_K:\Oseen])^n}\right|
\ll \sum_{g\in G(K/k)}\frac{T^{n-1/d}}{\eO^{dn-1}\delta_g(K/k)^{\mug}}.
\end{alignat*}
\end{lemma}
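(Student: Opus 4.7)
The plan is to convert the error in Proposition \ref{mainprop} into the desired sum over $g\in G(K/k)$ by combining the three successive-minima estimates in Section \ref{estsuccmin}. More precisely, I would show that the single quantity
\begin{alignat*}1
\frac{T^{n-1/d}}{\lambda_1^{n(l-1)}\lambda_l^{n(d-l+1)-1}}
\end{alignat*}
is dominated by the summand corresponding to the distinguished integer $g=[K_0:k]\in G(K/k)$ provided by Lemma \ref{lemma2.7}, after which adjoining the remaining positive terms of $\sum_{g\in G(K/k)}$ gives the claimed upper bound. Both cases $\q=0$ and $\q>0$ are treated identically, since the denominator in Proposition \ref{mainprop} has the same shape in either situation.

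The first step is to feed the two lower bounds $\lambda_1\gg\eO$ (Lemma \ref{minest1}) and $\lambda_l\gg\eO\,\delta_g(K/k)$ (Lemma \ref{lemma2.7}) into the denominator. Since the exponents $n(l-1)$ and $n(d-l+1)-1$ are both non-negative (the latter because $l\leq d$ and $n\geq 1$), this yields
\begin{alignat*}1
\lambda_1^{n(l-1)}\lambda_l^{n(d-l+1)-1}\gg \eO^{n(l-1)+n(d-l+1)-1}\,\delta_g(K/k)^{n(d-l+1)-1}=\eO^{dn-1}\,\delta_g(K/k)^{n(d-l+1)-1},
\end{alignat*}
using $n(l-1)+n(d-l+1)=nd$.

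The second step is to reconcile the $\delta_g(K/k)$-exponent with $\mug=mn(e-g)-1$. Lemma \ref{lemma2.6} gives $mg\geq l-1$, so
\begin{alignat*}1
d-l+1=em-l+1\geq em-mg=m(e-g),
\end{alignat*}
and hence $n(d-l+1)-1\geq \mug$. Since $\delta_g(K/k)$ is an infimum of Weil heights and is therefore at least $1$, passing to the smaller exponent $\mug$ only weakens the bound, so $\delta_g(K/k)^{n(d-l+1)-1}\geq \delta_g(K/k)^{\mug}$. Combining the two displays gives the estimate for the summand at this particular $g$, and thus for the full sum.

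The main (and rather modest) obstacle is just to verify the degenerate boundary case $l=1$, which by Definition \ref{ldef} forces $K=k$ and hence $e=1$, $g=1$, $\mug=-1$, $\delta_g(K/k)=1$; there the factor $\lambda_1^{n(l-1)}$ collapses to $1$ and one relies directly on $\lambda_l^{nd-1}=\lambda_1^{nd-1}\gg\eO^{nd-1}$ from Lemma \ref{minest1}, with $\delta_g(K/k)^{-1}=1$ making the target bound line up exactly. Outside this degenerate case the argument is a direct chain of inequalities with no hidden subtleties.
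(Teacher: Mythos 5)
Your proof is correct and takes essentially the same route as the paper: both reduce to Proposition \ref{mainprop} and then verify $\lambda_1^{n(l-1)}\lambda_l^{n(d-l+1)-1}\gg \eO^{dn-1}\delta_g(K/k)^{\mug}$ for the distinguished $g=[K_0:k]\in G(K/k)$ by combining Lemma \ref{minest1}, Lemma \ref{lemma2.7} and the exponent comparison $n(d-l+1)-1\geq \mug$ from Lemma \ref{lemma2.6} together with $\delta_g(K/k)\geq 1$, treating $l=1$ separately. No gaps.
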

\begin{proof}
Recall that $\psi$ and $\psiTi$ are in $\mathcal{T}$, and thus, to estimate the successive minima 
we can apply the results from Section \ref{estsuccmin} with $\phi=\psiTi$ 
and $\phi=\psi$ respectively.
Let $K_0=k(\frac{\theta_1}{\theta_1},\ldots,\frac{\theta_{l-1}}{\theta_1})$ if $l\geq 2$, and let $K_0=k$ if $l=1$, and put 
$g=[K_0:k]$.
In particular, we have $g\in G(K/k)$. Therefore, and by Proposition \ref{mainprop}, it suffices to show 
\begin{alignat}1\label{lamdeest}
{\lambda_1^{n(l-1)}\lambda_l^{n(d-l+1)-1}}\gg \eO^{dn-1}\delta_g(K/k)^{\mug}.
\end{alignat}
First suppose $l=l(\phi)\geq 2$. Then by Lemma \ref{lemma2.6} we have $n(d-l+1)-1\geq \mug$, and thus, (\ref{lamdeest}) follows
immediately from Lemma \ref{lemma2.7}.
Now suppose $l=1$. Then $\delta_g(K/k)=1$ and thus, (\ref{lamdeest}) follows again from Lemma \ref{lemma2.7}.
This proves the lemma.
\end{proof}

\section{Proof of Theorem \ref{mainthm2}}\label{Proof of Theorem2}
We start with the case $n=1$. Hence, by hypothesis, we have $k=K$. From (\ref{Naslatticepts}) and since $\0\notin \S_I(X^d)$ we obtain
\begin{alignat*}1
N(\Oseen_I(K/K),X)=|\sigma\Oseen(K/K)\cap \S_I(X^d)|=|\sigma\Oseen\cap \S_I(X^d)|.
\end{alignat*}
Applying Theorem \ref{mainthm} with $\Lambda=\sigma\Oseen$ and using Lemma \ref{minest1} yields
\begin{alignat*}1
\left|N(\Oseen_I(K/K),X)-\frac{2^{s_K}\Vol \S_I(X^{d})}{(\sqrt{|\Delta_K|}[\Oseen_K:\Oseen])}\right|
\leq c(1,d) \frac{(\Log X)^{\q}X^{d-1}}{\eO^{d-1}}.
\end{alignat*}
This proves Theorem \ref{mainthm2} for $n=1$.\\

Now we assume $n>1$.
Combining Lemma \ref{lemmamainest}, (\ref{Naslatticepts}) and (\ref{primptsdecom}) yields for $\q=0$
\begin{alignat*}1
\left|N(\Oseen_I^n(K/k),X)-\frac{2^{s_Kn}\Vol \S_I(X^d)}{(\sqrt{|\Delta_K|}[\Oseen_K:\Oseen])^n}\right|
\ll \sum_{g\in G(K/k)}\frac{X^{dn-1}}{\eO^{dn-1}\delta_g(K/k)^{\mug}}.
\end{alignat*}
For $\q>0$ we additionally use (\ref{ZIpart}) to get
\begin{alignat*}1
\left|N(\Oseen_I^n(K/k),X)-\frac{2^{s_Kn}\Vol \S_I(X^d)}{(\sqrt{|\Delta_K|}[\Oseen_K:\Oseen])^n}\right|
\ll \sum_{\m_{\F}} \sum_{g\in G(K/k)}\frac{X^{dn-1}}{\eO^{dn-1}\delta_g(K/k)^{\mug}}.
\end{alignat*}
By Lemma \ref{m-est} we know $|\m_{\F}|\ll (\log^+X)^{\q}$, and this completes the proof of Theorem \ref{mainthm2}.

\section{Proof of Corollary \ref{cor2}}\label{Corollary2}
Recall that $\Oseen_{\emptyset}^n(K/k)=\emptyset$, and thus $N(\Oseen^n(K/k),X)=\sum_I N(\Oseen_I^n(K/k),X)$, where the sum runs over all
subsets of $\{1,\ldots,r_K+s_K\}$. Also recall the definition of $\Sempty(X^d)$ from (\ref{defSI}). 
As the $2^{r+s}$ sets $\S_I(T)$ define a partition of $\ST$ we see that Corollary \ref{cor2} follows immediately from 
Theorem \ref{mainthm2} and the following lemma.
\begin{lemma}
Suppose $X\geq 1$ and either $n>1$ or $K=k$. Then
\begin{alignat*}1
\left|N(\Oseen_{\emptyset}^n(K/k),X)-\frac{2^{s_Kn}\Vol \Sempty(X^d)}{(\sqrt{|\Delta_K|}[\Oseen_K:\Oseen])^n}\right|
\ll \sum_{g\in G(K/k)}\frac{1}{\eO^{dn-1}\delta_g(K/k)^{\mug}}.
\end{alignat*}
\end{lemma}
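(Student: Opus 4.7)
The plan is to first observe that $N(\Oseen_{\emptyset}^n(K/k),X)=0$, so only the main term needs to be bounded. For any $\valpha\in\Oseen_{\emptyset}^n(K/k)$ we would need $|\sigma_i(\alpha_j)|<1$ for every Archimedean $i$ and every coordinate $\alpha_j\in\Oseen\subset\Oseen_K$; since $\prod_i|\sigma_i(\alpha_j)|^{d_i}=|N_{K/\IQ}(\alpha_j)|$ is either zero or a positive integer, each $\alpha_j$ must vanish, contradicting $\valpha\neq\0$. Thus the counting function is identically zero.

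Next I would note that $\Sempty(X^d)$ is independent of $X$ (because $|(1,\z_i)|_\infty=1$ whenever $|\z_i|_\infty<1$), equal to a product of open unit cubes and polydisks, hence contained in $B_0(\sqrt{dn})$, and with boundary lying in Lip$(dn,M,L)$ with $M,L\ll 1$ via a piecewise parameterization of its coordinate-aligned faces. I would then apply Corollary \ref{TWi1cor} to $Z=\Sempty$ and the lattice $\Lambda=\sigma\Oseen^n$, choosing $\phi$ to be the identity in $\mathcal{T}$ and $a=(l-1)n+1$, where $l$ is given by Definition \ref{ldef} applied to $\sigma\Oseen$. Because $\sigma\Oseen^n$ is a direct sum of $n$ copies of $\sigma\Oseen$, one has $\lambda_a(\sigma\Oseen^n)=\lambda_l(\sigma\Oseen)$ and $\lambda_1(\sigma\Oseen^n)=\lambda_1(\sigma\Oseen)$. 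The only lattice point in $\Sempty$ is $\0$ by Step~1, and $\0\notin\sigma\Oseen^n(a)$, so the count on the left-hand side of Corollary \ref{TWi1cor} vanishes, yielding
\[
\frac{2^{s_Kn}\Vol\Sempty(X^d)}{(\sqrt{|\Delta_K|}[\Oseen_K:\Oseen])^n}=\frac{\Vol\Sempty}{\det\sigma\Oseen^n}\ll\frac{1}{\lambda_1(\sigma\Oseen)^{(l-1)n}\,\lambda_l(\sigma\Oseen)^{n(d-l+1)-1}}.
\]

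The final step is to convert this lattice-minima bound into the stated arithmetic bound using Section \ref{estsuccmin}. Lemma \ref{minest1} gives $\lambda_1\gg\eO$, and Lemma \ref{lemma2.7} gives $\lambda_l\gg\eO\delta_g(K/k)$, where $g=[K_0:k]\in G(K/k)$ with $K_0=k(\theta_1/\theta_1,\ldots,\theta_{l-1}/\theta_1)$ (or $K_0=k$ when $l=1$). Multiplying, the denominator is $\gg\eO^{dn-1}\delta_g(K/k)^{n(d-l+1)-1}$. Lemma \ref{lemma2.6} yields $l-1\leq mg$, hence $n(d-l+1)-1\geq mn(e-g)-1=\mug$; since $\delta_g(K/k)\geq 1$, the exponent $n(d-l+1)-1$ may be lowered to $\mug$. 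This produces a single term of the required sum as an upper bound, and the full sum over $G(K/k)$ dominates it trivially.

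The only point requiring care is the explicit Lipschitz description of $\partial\Sempty$, but since $\Sempty$ is a product of cubes and polydisks no Schmidt-type partition is needed: the crucial simplification in the $I=\emptyset$ case is that $\Sempty$ is already box-shaped and $X$-independent, so that Corollary \ref{TWi1cor} applies in one shot. I do not anticipate any genuine obstacle beyond verifying the inequality $n(d-l+1)-1\geq\mug$ via Lemma \ref{lemma2.6}, which is the same arithmetic mechanism already used in the proof of Lemma \ref{lemmamainest}.
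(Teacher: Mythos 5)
Your proof is correct and takes essentially the same route as the paper: both observe that $N(\Oseen_{\emptyset}^n(K/k),X)=0$ and that $\Sempty(X^d)$ is the $X$-independent unit polydisk product, and then bound the volume term from below by the successive minima via Minkowski's second theorem combined with Lemma \ref{minest1}, Lemma \ref{lemma2.7} and Lemma \ref{lemma2.6} (with $\phi$ the identity), finishing with $\delta_g(K/k)\geq 1$ to lower the exponent to $\mug$. The only cosmetic difference is that the paper applies Minkowski's second theorem directly to the $d$-dimensional lattice $\sigma\Oseen$, getting $\sqrt{|\Delta_K|}[\Oseen_K:\Oseen]\gg\eO^d\delta_g(K/k)^{m(e-g)}$ and then raising to the $n$-th power, whereas you route the identical estimate through Corollary \ref{TWi1cor} applied to the $dn$-dimensional lattice $\sigma\Oseen^n$ with $a=(l-1)n+1$, which forces you to also verify (correctly, but unnecessarily) the Lipschitz parameterizability of $\partial\Sempty$ and the interleaving of the successive minima of an orthogonal direct sum.
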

\begin{proof}
We have $\Oseen_{\emptyset}^n(K/k)=\emptyset$, $\Vol \Sempty(X^d)=(2^{r_K}\pi^{s_K})^n$ and 
$\det \sigma \Oseen=2^{-s_K}\sqrt{|\Delta_K|}[\Oseen_K:\Oseen]$.
As $\eO\delta_g(K/k)\geq 1$ and $\mug=mn(e-g)-1$ it suffices to show that for some $g\in G(K/k)$
\begin{alignat}1\label{forq0est}
\sqrt{|\Delta_K|}[\Oseen_K:\Oseen]\gg\eO^d\delta_g(K/k)^{m(e-g)}.
\end{alignat}
Let $\phi$ be the identity on $\IR^r\times\IC^{s}$, let 
$\lambda_i$ be as in (\ref{visuccmin}), and let $l$ be as in Definition \ref{ldef}. Then 
\begin{alignat*}1
\sqrt{|\Delta_K|}[\Oseen_K:\Oseen]\gg\lambda_1\cdots \lambda_d\geq \lambda_1^{l-1}\lambda_l^{d-l+1}. 
\end{alignat*}
If $l=1$ then $K=k$ and $\delta_g(K/k)=1$, so that (\ref{forq0est}) follows from the above and Lemma \ref{minest1}.
If $l\geq 2$ we take $g=[k(\theta_1/\theta_1,\ldots,\theta_{l-1}/\theta_1):k]\in G(K/k)$.
Applying Lemma \ref{minest1}, Lemma \ref{lemma2.7} and Lemma \ref{lemma2.6} yields (\ref{forq0est}), and thereby proves the lemma.
\end{proof}

\section{Volume computations}\label{SectionVolComp}
\begin{lemma}\label{VolCompSI}
Suppose $\q\geq 0$ and $T\geq 1$. Then we have
\begin{alignat*}1
\Vol \S_I(T)=2^{rn}\pi^{sn}(-1)^{\q}\left(-1+T^n\sum_{i=0}^{\q}\frac{(-\log T^n)^i}{i!}\right).
\end{alignat*}
\end{lemma}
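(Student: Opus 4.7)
The set $\S_I(T)$ is a Cartesian product of an ``inner'' part indexed by $I$ and an ``outer'' part indexed by $I^c$, so the volume factors. The outer part is
\[
\prod_{i\in I^c}\{\z_i\in K_i^n : |\z_i|_\infty<1\},
\]
and for each $i$ the factor has volume $2^n$ (if $K_i=\IR$) or $\pi^n$ (if $K_i=\IC$). Hence the outer volume is $2^{r'n}\pi^{s'n}$, where $r',s'$ count the real/complex places in $I^c$.

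For the inner part, I will pass to ``radial'' coordinates $r_i=|\z_i|_\infty$, $i\in I$. The key observation is that the volume of $\{\z_i\in K_i^n:|\z_i|_\infty\le r\}$ is $2^nr^n$ in the real case and $\pi^nr^{2n}$ in the complex case, i.e.\ $c_ir^{d_in}$ with $c_i\in\{2^n,\pi^n\}$. Differentiating and integrating the radial shells gives
\[
\Vol\bigl\{(\z_i)_I: |\z_i|_\infty\geq 1,\ \textstyle\prod_I|\z_i|_\infty^{d_i}\le T\bigr\}
=\prod_{i\in I}c_i\cdot\int_{\Omega}\prod_{i\in I}d_in\,r_i^{d_in-1}\,dr_i,
\]
where $\Omega=\{(r_i)_I:r_i\ge 1,\ \prod r_i^{d_i}\le T\}$. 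The substitution $u_i=r_i^{d_in}$ absorbs the Jacobian and turns the integral into
\[
V_{\q+1}(T^n):=\Vol\bigl\{(u_i)_I\in[1,\infty)^{\q+1}:\textstyle\prod_Iu_i\le T^n\bigr\}.
\]
Multiplying the inner constant $\prod_{i\in I}c_i$ by the outer constant $\prod_{i\in I^c}c_i$ yields $2^{rn}\pi^{sn}$, so it remains only to evaluate $V_{\q+1}(T^n)$.

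The main (and essentially only nontrivial) step is to prove by induction on $k$ that
\[
V_k(t)=(-1)^{k-1}\!\left(-1+t\sum_{i=0}^{k-1}\frac{(-\log t)^i}{i!}\right)\qquad (t\ge 1).
\]
The base case $k=1$ gives $V_1(t)=t-1$. For the inductive step, I will use the recursion
\[
V_k(t)=\int_{1}^{t}V_{k-1}(t/u)\,du,
\]
obtained by integrating out $u_k$. Applying the inductive hypothesis and making the change of variables $v=\log u-\log t$ reduces the integrand to a polynomial in $v$, and a direct antiderivative computation gives the claimed closed form (the telescoping identity $1-P_k(-\log t)=-\sum_{j=1}^{k-1}(-\log t)^j/j!$, where $P_k(x)=\sum_{i=0}^{k-1}x^i/i!$, is what makes the bookkeeping work out). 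Substituting $t=T^n$ and combining with the constant $2^{rn}\pi^{sn}$ gives the formula in the lemma.

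The only real obstacle is the inductive integral computation; everything else is a clean change of variables. I expect no difficulty handling the edge cases $\q=0$ (where no induction is needed) or $I^c=\emptyset$ (where the outer factor is the empty product $1$, consistent with the formula).
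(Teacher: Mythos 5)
Your proof is correct, and it is a genuine (if mild) streamlining of the paper's argument. The paper also splits off the $I^c$-factor and then computes the $I$-part by Fubini, but it runs two separate inductions in the original coordinates — first integrating out the complex coordinates one at a time (recursion $V_{0,\s}(T)=2\pi^n n\int_1^{\sqrt T}\rho^{2n-1}V_{0,\s-1}(T/\rho^2)\,d\rho$), then the real ones — and has to carry the constants $2^{\r n}\pi^{\s n}$ and the exponents $d_in$ through both inductions. Your substitution $u_i=r_i^{d_in}$ absorbs all of that at once and reduces the problem to the single normalized volume $V_{\q+1}(T^n)$ of $\{(u_i)\in[1,\infty)^{\q+1}:\prod u_i\le t\}$, for which the recursion $V_k(t)=\int_1^tV_{k-1}(t/u)\,du$ and the closed form $V_k(t)=(-1)^{k-1}\bigl(-1+tP_k(-\log t)\bigr)$ follow by one uniform induction; I checked the inductive step and the change of variables $w=\log u-\log t$ does produce exactly the telescoping $-t\sum_{j=1}^{k-1}(-\log t)^j/j!$ you describe. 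What the paper's version buys is that it never leaves the original Euclidean coordinates; what yours buys is that the real and complex places are treated identically and the combinatorics is confined to one elementary one-dimensional integral. Both are complete proofs.
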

\begin{proof}
Put $\r=|I\cap\{1,\ldots,r\}|$ and $\s=|I\cap\{r+1,\ldots,r+s\}|$.
From (\ref{SITcartprod}) we see that $\Vol \S_I(T)$ is given by the product of $2^{(r-\r)n}\pi^{(s-\s)n}$ and the $d'n$-dimensional volume of the set  
$\{(\z_i)_I\in \prod_I{K_i};\prod_I |\z_i|_{\infty}^{d_i}\leq T,|\z_i|_{\infty}\geq 1 \text{ for } i\in I\}$. Denote the latter by $V_{\r,\s}(T)$. 
For the sake of readability let us momentarily rewrite the variables $\z_i$ for $i\in I\cap\{1,\ldots,r\}$ as 
$\x_1,\ldots,\x_{\r}$ and  $\z_i$ for $i\in I\cap\{r+1,\ldots,r+s\}$ as 
$\y_1,\ldots,\y_{\s}$.
Clearly, we have
$V_{0,1}(T)=\pi^n(T^n-1)$, and Fubini's Theorem implies 
\begin{alignat*}1
V_{0,\s}(T)&=\int_{1\leq |\y_{\s}|_{\infty}\leq \sqrt{T}}V_{0,\s-1}(T/|\y_{\s}|_{\infty}^2)d\y_{\s}\\
&=n\int_{1\leq |y_{\s1}|\leq \sqrt{T}}\int_{0\leq |y_{\s2}|\leq |y_{\s1}|}\cdots \int_{0\leq |y_{\s n}|\leq |y_{\s1}|}V_{0,\s-1}(T/|y_{\s1}|^2)dy_{\s n}\cdots dy_{\s1}\\
&=n\int_{1\leq |y_{\s1}|\leq \sqrt{T}}(\pi |y_{\s1}|^2)^{n-1}V_{0,\s-1}(T/|y_{\s1}|^2)dy_{\s1}\\
&=n\int_{1}^{\sqrt{T}}\int_{0}^{2\pi}\rho (\pi \rho^2)^{n-1} V_{0,\s-1}(T/\rho^2)d\theta d\rho\\
&=2\pi^n n\int_{1}^{\sqrt{T}}\rho^{2n-1} V_{0,\s-1}(T/\rho^2)d\rho.
\end{alignat*}
By induction we conclude 
\begin{alignat*}1
V_{0,\s}(T)&=\pi^{\s n}\left((-1)^{\s}+\sum_{i=0}^{\s-1}\frac{(-1)^{\s-1-i}n^i}{i!}{T^n}(\log T)^i\right).
\end{alignat*}
Again, by Fubini's Theorem we find
\begin{alignat*}1
V_{\r,\s}(T)&=\int_{1\leq |\x_{\r}|_{\infty}\leq T}V_{\r-1,\s}(T/|\x_{\r}|_{\infty})d\x_{\r}\\
&=2^n n\int_{1}^{T} x_{\r1}^{n-1}V_{\r-1,\s}(T/x_{\r1})dx_{\r1}.
\end{alignat*}
Once more a simple induction argument shows
\begin{alignat*}3
V_{\r,\s}(T)&=2^{\r n}\pi^{\s n}\left((-1)^{\q-1}+\sum_{i=0}^{\q}\frac{(-1)^{\q-i}n^i}{i!}T^n(\log T)^i\right)\\
&=2^{\r n}\pi^{\s n}(-1)^{\q}\left(-1+T^n\sum_{i=0}^{\q}\frac{(-\log T^n)^i}{i!}\right).
\end{alignat*}
As $\Vol \S_I(T)=2^{(r-\r)n}\pi^{(s-\s)n}V_{\r,\s}(T)$ the lemma is proved.
\end{proof}

\begin{lemma}\label{VolCompS}
Suppose $T\geq 1$. Then we have
\begin{alignat*}1
\Vol \ST=\sum_{i=0}^{q} c_i T^n(\log T^n)^i,
\end{alignat*}
where
\begin{alignat*}1
c_i=\frac{2^{rn}\pi^{sn}}{i!}{q\choose i}.
\end{alignat*}
\end{lemma}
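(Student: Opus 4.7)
The plan is simple: use that the $2^{r+s}$ sets $\S_I(T)$ form a disjoint partition of $\ST$ (as noted in the paragraph introducing $\ST$), so that $\Vol \ST = \sum_{I\subseteq\{1,\ldots,r+s\}} \Vol \S_I(T)$, and then apply Lemma \ref{VolCompSI} to each summand. The lemma covers only $I\neq\emptyset$, so I handle $I=\emptyset$ separately: the constraint $\prod|\z_i|_\infty^{d_i}\leq T$ is automatic when $|\z_i|_\infty<1$ for all $i$, so $\S_\emptyset(T)$ is a product of $n$-fold unit balls and has volume $2^{rn}\pi^{sn}$. This also agrees with the formula of Lemma \ref{VolCompSI} under the conventions $(-1)^{-1}=-1$ and empty sum $=0$, which lets me write every $I$ uniformly.

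Next I collect terms by $k=|I|$. Since $\Vol \S_I(T)$ depends only on $|I|$, there are $\binom{r+s}{k}$ identical contributions, giving
\[
\Vol \ST = 2^{rn}\pi^{sn}\sum_{k=0}^{r+s}\binom{r+s}{k}(-1)^{k-1}\left(-1 + T^n\sum_{i=0}^{k-1}\frac{(-\log T^n)^i}{i!}\right).
\]
The ``constant'' part simplifies to $-\sum_{k=0}^{r+s}\binom{r+s}{k}(-1)^{k}=-(1-1)^{r+s}=0$, using $r+s=q+1\geq 1$. For the remaining part I expand $(-\log T^n)^i=(-1)^i(\log T^n)^i$ and swap the order of summation in $k$ and $i$; the coefficient of $T^n(\log T^n)^i/i!$ becomes
\[
\sum_{k=i+1}^{r+s}\binom{r+s}{k}(-1)^{k-1+i}=(-1)^{i}\sum_{k=0}^{i}\binom{r+s}{k}(-1)^{k},
\]
where the equality uses once more the vanishing of the full alternating sum.

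The last step is the standard identity $\sum_{k=0}^{i}\binom{N}{k}(-1)^{k}=(-1)^{i}\binom{N-1}{i}$ (easily proved by induction via Pascal's rule), applied with $N=r+s$; this produces $\binom{r+s-1}{i}=\binom{q}{i}$, which is precisely $c_i\cdot i!/(2^{rn}\pi^{sn})$. The only real obstacle is bookkeeping: getting the signs right and absorbing the edge case $I=\emptyset$ consistently into the formula; there are no analytic subtleties, just an algebraic rearrangement.
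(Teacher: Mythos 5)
Your proof is correct and follows essentially the same route as the paper: partition $\ST$ into the sets $\S_I(T)$, apply Lemma \ref{VolCompSI} (with the natural convention for $I=\emptyset$), group by $|I|$, observe the constant terms cancel via the full alternating binomial sum, and reduce the coefficient of $T^n(\log T^n)^i$ to the identity $\sum_{\q=i}^{q}(-1)^{i+\q}\binom{q+1}{\q+1}=\binom{q}{i}$, which you verify via the equivalent partial-sum identity $\sum_{k=0}^{i}\binom{N}{k}(-1)^{k}=(-1)^{i}\binom{N-1}{i}$. The only differences are cosmetic (you spell out the binomial identity the paper merely asserts), and the stray sign in your ``constant part'' is harmless since that sum vanishes either way.
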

\begin{proof}
Clearly, we have $\Vol \ST=\sum_{I}\Vol \S_I(T)$, where the sum runs over all subsets $I$ of $\{1,\ldots,r+s\}$.
Now in order to compute the coefficient $c_i$ we have to sum the contribution
from each $\Vol \S_I(T)$. First note that 
\begin{alignat*}1
2^{rn}\pi^{sn}\sum_{I}(-1)^{\q+1}=2^{rn}\pi^{sn}\sum_{j=0}^{q+1}(-1)^{j}{q+1\choose j}=0.
\end{alignat*}
It remains to compute the coefficients $c_i$. The contribution of $\Vol \S_I(T)$ is zero if $\q=|I|-1<i$, and
\begin{alignat*}1
2^{rn}\pi^{sn}\frac{(-1)^{\q+i}}{i!}
\end{alignat*}
if $\q\geq i$. As we have ${q+1\choose \q+1}$ sets $I$ of cardinality $\q+1$ we conclude
\begin{alignat*}1
c_i=\frac{2^{rn}\pi^{sn}}{i!}\sum_{\q=i}^{q}(-1)^{i+\q}{q+1\choose \q+1}=\frac{2^{rn}\pi^{sn}}{i!}{q\choose i}.
\end{alignat*}
This concludes the proof of the lemma.
\end{proof}

\section{Upper bounds for the non-projectively primitive points}\label{nonproprimpts}
Recall the definition of the set of non-projectively primitive points in $\Oseen_K^n$
\begin{alignat*}1
\Oseen_{npp}^n(K/k)=\{\valpha\in \Oseen_K^n\backslash \Oseen_K^n(K/k); k(\valpha)=K\}.
\end{alignat*}
Let $k(n;e)$ be the subset of $\kbar^n$ of points $\valpha$ with $[k(\valpha):k]=e$.
Schmidt \cite[Theorem]{22} has shown the following estimate:
\begin{alignat}1\label{Schmidtbound}
N(k(n;e),X)\leq c_2(m,e,n)X^{me(n+e)},
\end{alignat}
where $c_2(m,e,n)=2^{me(e+n+3)+e^2+n^2+10e+10n}$.
\begin{lemma}\label{lemmanonproprimpts}
Suppose $e>1$. Then we have 
\begin{alignat*}1
\sum_{\coll_e(k)}N(\Oseen_{npp}^n(K/k),X)\ll\sup_{g\mid e}X^{m(g^2+gn+e^2/g+e)},
\end{alignat*}
where the supremum runs over all positive divisors $g<e$ of $e$. Moreover, for $e=1$ (and $X\geq 1$) we have
\begin{alignat*}1
\sum_{\coll_e(k)}N(\Oseen_{npp}^n(K/k),X)=1.
\end{alignat*}
\end{lemma}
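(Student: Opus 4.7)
The plan is to handle $e=1$ by direct inspection and, for $e>1$, to parameterize each $\valpha\in\Oseen_{npp}^n(K/k)$ by a projective piece defined over a proper subfield of $K$ together with a single generator finishing the extension, then bound the two pieces using Schmidt's inequality~(\ref{Schmidtbound}). For $e=1$ the only field in $\coll_1(k)$ is $k$, and for any $\valpha\in\Oseen_k^n\setminus\{\0\}$ the coordinate ratios already lie in $k$, so $\Oseen_k^n(k/k)=\Oseen_k^n\setminus\{\0\}$; consequently $\Oseen_{npp}^n(k/k)=\{\0\}$, yielding exactly $1$.

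For $e>1$ I exploit that $K=k(\valpha)$ is determined by $\valpha$, so the left-hand side equals the number of $\valpha\in\overline{\Oseen}^n$ with $[k(\valpha):k]=e$ and $K_1:=k(\ldots,\alpha_i/\alpha_j,\ldots)\subsetneq K$. Put $g=[K_1:k]$, which is a divisor of $e$ with $1\leq g<e$, and $f=e/g=[K:K_1]$. Let $j_0$ be the smallest index with $\alpha_{j_0}\neq 0$ and set $\beta_i=\alpha_i/\alpha_{j_0}$ for $j_0<i\leq n$, $\boldsymbol{\beta}'=(\beta_{j_0+1},\ldots,\beta_n)$. Since any nonzero ratio $\alpha_i/\alpha_\ell$ equals $0$, $\beta_i$, or $\beta_i/\beta_\ell$, one has $K_1=k(\boldsymbol{\beta}')$; and from $K=K_1(\alpha_{j_0})$ we deduce $[K_1(\alpha_{j_0}):K_1]=f$. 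Standard height estimates yield $H(\alpha_{j_0})\leq H(\valpha)\leq X$, and $H(\boldsymbol{\beta}')\leq H(\valpha)\leq X$ because $H(\boldsymbol{\beta}')$ coincides with the projective height of $\valpha/\alpha_{j_0}=(0,\ldots,0,1,\beta_{j_0+1},\ldots,\beta_n)$, which is at most $H(\valpha)$.

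Applying~(\ref{Schmidtbound}) with ground field $k$, dimension $n-j_0$ and degree $g$ bounds the number of admissible $\boldsymbol{\beta}'$ by $\ll X^{mg(n-j_0+g)}\leq X^{mg(n+g)}$, and applying~(\ref{Schmidtbound}) with ground field $K_1$ (of degree $mg$ over $\IQ$), dimension $1$ and degree $f$ bounds, for the determined $K_1$, the number of choices for $\alpha_{j_0}$ by $\ll X^{mgf(1+f)}=X^{me(1+f)}$. Multiplying and summing over the finitely many choices of $j_0\in\{1,\ldots,n\}$ and $g\mid e$ with $g<e$ yields
\[\sum_{\coll_e(k)}N(\Oseen_{npp}^n(K/k),X)\ll\sup_{g\mid e,\,g<e}X^{m(g(n+g)+e(1+e/g))}=\sup_{g\mid e,\,g<e}X^{m(g^2+gn+e^2/g+e)},\]
as required. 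The step requiring most care is the identification $K_1=k(\boldsymbol{\beta}')$, without which Schmidt would yield a weaker bound and we could not pin down the exponent~$g$; one also has to handle the boundary case $j_0=n$, where $\boldsymbol{\beta}'$ is empty so $K_1=k$ and $g=1$, and the count collapses to $N(k(1;e),X)\ll X^{me(e+1)}$, which is absorbed by the $g=1$ term of the supremum.
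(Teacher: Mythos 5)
Your proposal is correct and follows essentially the same route as the paper: dispose of $e=1$ by noting $\Oseen_{npp}^n(k/k)=\{\0\}$, and for $e>1$ factor each point as a ratio vector generating the proper subfield $K_1$ of degree $g\mid e$ together with one coordinate generating $K/K_1$, then apply Schmidt's bound (\ref{Schmidtbound}) to each piece and multiply. The only cosmetic difference is that you select the first nonzero coordinate index $j_0$ explicitly (which also absorbs the $n=1$ case), whereas the paper loses a factor $n$ by symmetry and treats $n=1$ separately.
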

\begin{proof}
If $e=1$ then $\coll_e(k)=\{k\}$ and $\Oseen_{npp}^n(k/k)=\{\0\}$. As $X\geq 1$ the lemma holds.
From now on we assume $e>1$.
Then the left-hand side counts points $\valpha=(\alpha_1,\ldots,\alpha_n)$ in $\Oseen_k(e;n)$ with
$k(\ldots,\alpha_i/\alpha_j,\ldots)\subsetneq k(\valpha)$ and $H(\valpha)\leq X$.
First suppose $n=1$. Then the left-hand side simply counts algebraic integers of 
degree $e$ over $k$ and height no larger than $X$. The number of these is by (\ref{Schmidtbound})
\begin{alignat*}1
\leq c_2(m,e,1)X^{me(e+1)}\ll \sup_{g\mid e}X^{m(g^2+gn+e^2/g+e)}.
\end{alignat*}
This proves the lemma for $n=1$.
Now we assume $n>1$.
As $e>1$ each $\valpha$ is nonzero, and so we loose only a factor $n$ if we assume 
$\alpha_1\neq 0$. Under this assumption $\valpha$ has the form $\valpha=(\theta,\theta \beta_2,\ldots,\theta \beta_n)$ 
such that with $F=k(\ldots,\alpha_i/\alpha_j,\ldots)$ one has: $k(\valpha)=F(\theta)$ and $k(\beta_2,\ldots,\beta_n)=F$. 
Furthermore, we have
\begin{alignat*}1
X\geq H(\valpha)=H(\theta,\theta \beta_2,\ldots,\theta \beta_n)=H(1/\theta,\beta_2,\ldots,\beta_n)\geq \max\{H(\theta),H(\beta_2,\ldots,\beta_n)\}.
\end{alignat*}
Therefore, it suffices to give an upper bound for the number of
$(\beta_2,\ldots,\beta_n,\theta)\in \kbar^n$ with
\begin{alignat*}1
&[k(\beta_2,\ldots,\beta_n):k]=g\leq e/2,\\
&[k(\theta,\beta_2,\ldots,\beta_n):k(\beta_2,\ldots,\beta_n)]=e/g,\\
&H(\beta_2,\ldots,\beta_n), H(\theta)\leq X.
\end{alignat*}
Let us fix a $g$ as above.
From (\ref{Schmidtbound}) we obtain the upper bound
\begin{alignat}1\label{boundbetavector}
c_2(m,g,n-1)X^{mg(g+n)}
\end{alignat}
for the number of such vectors $(\beta_2,\ldots,\beta_n)$.
Next for each $(\beta_2,\ldots,\beta_n)$ we count the number of $\theta$.
Now we have 
$[k(\theta,\beta_2,\ldots,\beta_n):k(\beta_2,\ldots,\beta_n)]=e/g$,
and, moreover, $H(\theta)\leq X$. Applying (\ref{Schmidtbound}) 
once more yields the upper bound
\begin{alignat}1
\label{ubalpha2}
c_2(mg,e/g,1) X^{[k(\beta_2,\ldots,\beta_n):\IQ](e/g)(e/g+1)}\ll X^{me(e/g+1)}
\end{alignat}
for the number of $\theta$, provided $(\beta_2,\ldots,\beta_n)$ is fixed.
Multiplying the bound (\ref{boundbetavector}) for the number
of $(\beta_2,\ldots,\beta_n)$ and (\ref{ubalpha2}) for the number
of $\theta$ gives the upper bound
\begin{alignat*}1
\ll X^{m(g^2+gn+e^2/g+e)}
\end{alignat*}
for the number of tuples $(\beta_2,\ldots,\beta_n,\theta)$. Taking the supremum over all possible values of $g$ 
proves the lemma.
\end{proof}

\section{Proof of Theorem \ref{mainthm1}}\label{proofmainthm1}
We start with a simple lemma. Put
\begin{alignat}1\label{gamma_g}
\gamma_g=m(g^2+g+e^2/g+e).
\end{alignat}
We remind the reader that $\mug=mn(e-g)-1$ and $\Cem=\max\{2+\frac{4}{e-1}+\frac{1}{m(e-1)},7-\frac{e}{2}+\frac{2}{me}\}$.
\begin{lemma}\label{lemmanecond}
Suppose $e>1$, $n>e+\Cem$ and $1\leq g\leq e/2$. Then we have
\begin{alignat}1
\label{firstcond}\gamma_g-\mug&\leq -2/e,\\
\label{secondcond}m(g^2+gn+e^2/g+e)&\leq men-1,\\
\label{thirdcond}(e+2)/4-n/2&\leq -\Cem/2.
\end{alignat}
\end{lemma}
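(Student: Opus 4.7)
The plan is to verify the three inequalities in the order (3), (2), (1), with (1) being the crux and the other two following cheaply.

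Inequality (3) is essentially immediate: since $e \geq 2$ (as $e > 1$ is a positive integer), we have $(e+2)/2 \leq e$, so the hypothesis $n > e + \Cem$ gives $n > (e+2)/2 + \Cem$, which rearranges to (3). For (2), I would rewrite it as $\gamma_g - \mug \leq mg$. Since $mg \geq 1 > 0 > -2/e$, the stronger bound (1) implies (2) at once, so the reduction is to proving (1).

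The main work is (1). Dividing by $m$, multiplying by $e$, and clearing the positive factor $e-g$ turns it into
\[
n \;\geq\; R(g) \;:=\; \frac{g^2+g+e^2/g+e}{e-g} + \frac{e+2}{em(e-g)}.
\]
I would bound $R(g)$ on $[1, e/2]$ by checking the two endpoints, each corresponding to one term in the $\max$ defining $\Cem$. At $g=1$, polynomial division of $e^2+e+2$ by $e-1$ yields
\[
R(1) \;=\; e + \Big(2 + \tfrac{4}{e-1} + \tfrac{1}{m(e-1)}\Big) + \tfrac{2}{em(e-1)},
\]
with the bracketed expression being the first option in the $\max$. A parallel calculation at $g=e/2$ (for $e$ even; the $\lfloor e/2 \rfloor$ case for odd $e$ gives strictly smaller values, as can be verified directly) produces
\[
R(e/2) \;=\; e + \bigl(7 - e/2 + \tfrac{2}{me}\bigr) + \tfrac{4}{me^2},
\]
matching the second option. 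For intermediate $g \in (1, e/2)$, I would use a monotonicity argument: writing $R = p/(e-g)$ with $p(g) = g^2 + g + e^2/g + e + (e+2)/(em)$, the numerator of $R'(g)$ is $\phi(g) := p'(g)(e-g) + p(g)$, and one computes $\phi'(g) = p''(g)(e-g) > 0$ on $(0,e)$ since $p''(g) = 2 + 2e^2/g^3 > 0$. Hence $R$ has at most one interior critical point on $[1,e/2]$, and that point is a minimum, so $\max_{[1,e/2]} R$ is attained at an endpoint.

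The main obstacle is closing the small positive residuals $\tfrac{2}{em(e-1)}$ and $\tfrac{4}{me^2}$ separating $R$ at the endpoints from $e + \Cem$. Both are strictly less than $1$ for $e \geq 2$, $m \geq 1$, and the definition of $\Cem$ has been carefully tuned so that the integrality of $n$, combined with the strict inequality $n > e + \Cem$, bridges them: if $e + \Cem$ is an integer then $n \geq e + \Cem + 1$ already majorizes $R(g)$, while otherwise $\lceil e + \Cem \rceil - (e+\Cem)$ exceeds the residual after a short inspection (the tightest cases being the small values $e = 2,3,4$). This gives $n \geq R(g)$ in all cases, completing the proof of (1) and hence of the lemma.
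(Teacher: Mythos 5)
Your reductions of (2) and (3) to (1) are correct, and your unimodality argument showing that $\sup_{[1,e/2]}R$ is attained at an endpoint is fine. The gap is in the final step, where you absorb the residuals $\tfrac{2}{em(e-1)}$ and $\tfrac{4}{me^2}$ using only the integrality of $n$ \emph{at the two endpoints}. The claim ``$n>e+\Cem$ implies $n\geq R(e/2)$'' is simply false for odd $e$: take $m=1$, $e=5$. Then $\Cem=7-\tfrac{5}{2}+\tfrac{2}{5}$, so $e+\Cem=9.9$ and $n>e+\Cem$ only forces $n\geq 10$, whereas your own formula gives $R(e/2)=e+\Cem+\tfrac{4}{me^2}=10.06>10$. (Similarly $m=2$, $e=3$ gives $n\geq 9$ but $R(3/2)\approx 9.056$.) The lemma is nevertheless true in these cases because only \emph{integer} $g\leq\lfloor e/2\rfloor$ ever occur, and $R$ at those integers is much smaller; but your parenthetical ``the $\lfloor e/2\rfloor$ case for odd $e$ gives strictly smaller values'' does not repair this, since your chain of inequalities runs through $n\geq R(e/2)$, which is the statement that fails. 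The deferred ``short inspection'' is therefore not a formality: it is exactly where the proof breaks, and restricting to $g=\lfloor e/2\rfloor$ would require a fresh endpoint computation you have not done.

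The clean fix — and the route the paper takes — is to apply the integrality argument at \emph{every} integer $g$ rather than only at the endpoints. Write the target as $n\geq F(g)+\tfrac{2}{me(e-g)}$ with $F(g)=\tfrac{g^2+g+e^2/g+e}{e-g}+\tfrac{1}{m(e-g)}$ (so $R=F+\tfrac{2}{me(e-g)}$). For integer $g$, $F(g)$ is a rational number with denominator dividing $mg(e-g)$, so the strict inequality $n>F(g)$ automatically yields $n\geq F(g)+\tfrac{1}{mg(e-g)}\geq F(g)+\tfrac{2}{me(e-g)}$, since $g\leq e/2$. This absorbs the residual uniformly in $g$, $m$, $e$ with no case analysis, and reduces everything to the strict inequality $n>F(g)$, which your convexity argument (applied to $F$ rather than $R$) settles via the exact identities $F(1)=e+2+\tfrac{4}{e-1}+\tfrac{1}{m(e-1)}$ and $F(e/2)=e+7-\tfrac{e}{2}+\tfrac{2}{me}$, i.e.\ $\max\{F(1),F(e/2)\}=e+\Cem$ with no leftover terms.
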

\begin{proof}
Let us write (\ref{firstcond}) as
\begin{alignat*}1
m(g^2+g+e^2/g+e)-mn(e-g)+1+2/e\leq 0.
\end{alignat*}
With
\begin{alignat*}1
F(g)=\frac{g^2+g+e^2/g+e}{e-g}+\frac{1}{m(e-g)},
\end{alignat*}
this means
\begin{alignat}1\label{ngeqFplus}
n\geq F(g)+\frac{2}{me(e-g)}.
\end{alignat}
As $F(g)$ is a fraction with denominator dividing $mg(e-g)$ we conclude that
$n>F(g)$ implies $n\geq F(g)+\frac{1}{mg(e-g)}\geq F(g)+\frac{2}{me(e-g)}$.
Hence, it suffices to check $n>F(g)$.
Using that $(e-g)e^2/g^3\geq e^2/g^2$ for $1\leq g\leq e/2$, one sees
that the second derivative $F''(g)$ is positive for $1\leq g\leq e/2$.
Hence, $F(g)$ is here concave, and so it suffices to check that 
$n>F(1)$ and $n>F(e/2)$, which is equivalent to our hypothesis $n>e+\Cem.$
The claim (\ref{secondcond}) is equivalent to
\begin{alignat}1\label{ngeqFminus}
n\geq F(g)-\frac{g}{e-g}.
\end{alignat}
But we have just seen that (\ref{ngeqFplus}) holds and thus 
(\ref{ngeqFminus}) holds as well.
And, finally, (\ref{thirdcond}) follows from the assumptions 
$n>e+\Cem$ and $e>1$. This proves the lemma.
\end{proof}

We have the following disjoint union
\begin{alignat*}1
\Oseen_k(n;e)=\bigcup_{\coll_e(k)}\Oseen_K^n(K/k)\cup\Oseen_{npp}^n(K/k).
\end{alignat*}
Therefore,
\begin{alignat}1\label{sumdecomp}
N(\Oseen_k(n;e),X)=\sum_{\coll_e(k)}N(\Oseen_K^n(K/k),X)+\sum_{\coll_e(k)}N(\Oseen_{npp}^n(K/k),X).
\end{alignat}
Combining Lemma \ref{lemmanonproprimpts} and Lemma \ref{lemmanecond} shows that for $e>1$ and $n>e+\Cem$
\begin{alignat*}1
\sum_{\coll_e(k)}N(\Oseen_{npp}^n(K/k),X)\ll X^{men-1}.
\end{alignat*}
But by Lemma \ref{lemmanonproprimpts} the latter remains trivially true for $e=1$.
Therefore, we may focus on the first sum in (\ref{sumdecomp}).
By virtue of Corollary \ref{cor2} and Lemma \ref{VolCompS} it suffices to show that the following sums converge
\begin{alignat}1
\label{summainterms}&\sum_{\coll_e(k)}{|\Delta_K|^{-n/2}},\\
\label{sumerrorterms}&\sum_{\coll_e(k)} \sum_{g\in G(K/k)}\delta_g(K/k)^{-\mug}.
\end{alignat}
First suppose $e=1$. Then $\coll_e(k)=\{k\}$ consists of a single field, and, hence, both sums converge.
Next we assume 
\begin{alignat*}1
e>1,
\end{alignat*}
and thus by hypothesis $n>e+\Cem$.
Let us start with the sum in (\ref{summainterms}). Let
\begin{alignat*}1
N_{\Delta}(\coll_e(k),T)=|\{K\in \coll_e(k);|\Delta_K|\leq T\}|
\end{alignat*}
be the number of fields in $\coll_e(k)$ with discriminant no larger than $T$ in absolute value.
Schmidt \cite{55} has shown that
\begin{alignat}1\label{discbound}
N_{\Delta}(\coll_e(k),T)\leq c(k,e)T^{(e+2)/4}.
\end{alignat}
Ellenberg and Venkatesh \cite{56} have established a better bound for large values of $e$. However, for our purpose
Schmidt's bound is good enough. A simple dyadic summation argument proves the desired convergence. More precisely,
\begin{alignat*}3
\nonumber\sum_{\coll_e(k)}{|\Delta_K|^{-n/2}}
&=\sum_{i=1}^{\infty}\sum_{K\in \coll_e(k)\atop 2^{i-1}\leq |\Delta_K|<2^i}{|\Delta_K|^{-n/2}}
&&\leq \sum_{i=1}^{\infty}\frac{N_{\Delta}(\coll_e(k),2^i)}{2^{(i-1)n/2}}\\
&\leq c(k,e)\sum_{i=1}^{\infty}\frac{2^{i(e+2)/4}}{2^{(i-1)n/2}}&&=c(k,e)2^{n/2}\sum_{i=1}^{\infty}{2^{i((e+2)/4-n/2)}}.
\end{alignat*}
By (\ref{thirdcond}) we have $(e+2)/4-n/2\leq -\Cem/2<0$. Therefore, the last sum converges, and this proves the convergence of
(\ref{summainterms}).\\

To deal with the sum (\ref{sumerrorterms}) we need some more notation and an analogue of (\ref{discbound}) for the counting function
associated to $\delta_g$.
We define
\begin{alignat*}1
G_u=\bigcup_{\coll_e(k)}G(K/k).
\end{alignat*}
Clearly, $G_u\subset \{1,\ldots,[e/2]\}$. Now for any $g\in G_u$ we define 
\begin{alignat*}1
\coll_e^{(g)}(k)=\{K\in \coll_e(k); g\in G(K/k)\}
\end{alignat*}
and its counting function
\begin{alignat*}1
N_{\delta_g}(\coll_e^{(g)}(k),T)=|\{K\in \coll_e^{(g)}(k);\delta_g(K/k)\leq T\}|.
\end{alignat*}
\begin{lemma}
For $g$ in $G_u$, and $\gamma_g$ as in (\ref{gamma_g}) we have
\begin{alignat*}1
N_{\delta_g}(\coll_e^{(g)}(k),T)\ll T^{\gamma_g}.
\end{alignat*}
\end{lemma}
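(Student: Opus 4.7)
The plan is to bound $N_{\delta_g}(\coll_e^{(g)}(k),T)$ by counting pairs of algebraic numbers rather than fields, and then to invoke Schmidt's estimate (\ref{Schmidtbound}) twice, once for an element of relative degree $g$ and once for a companion element of relative degree $e/g$.

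First I would reduce the problem to counting pairs. By Northcott's Theorem the set of heights of admissible pairs $(\alpha,\beta)$ is discrete, so the infimum defining $\delta_g(K/k)$ is attained. Hence, for every $K \in \coll_e^{(g)}(k)$ with $\delta_g(K/k) \leq T$, we can fix a witness $(\alpha_K,\beta_K)$ with $k(\alpha_K,\beta_K)=K$, $[k(\alpha_K):k]=g$, and $H(\alpha_K,\beta_K)\leq T$; in particular $H(\alpha_K),\,H(\beta_K)\leq T$. Since $K=k(\alpha_K,\beta_K)$ is recovered from the pair, the map $K \mapsto (\alpha_K,\beta_K)$ is injective, so it suffices to bound the number of pairs $(\alpha,\beta)\in\kbar\times\kbar$ with $[k(\alpha):k]=g$, $[k(\alpha,\beta):k(\alpha)]=e/g$, and $H(\alpha),\,H(\beta)\leq T$.

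The next step is to count such pairs by first choosing $\alpha$ and then, for fixed $\alpha$, choosing $\beta$. The number of $\alpha\in\kbar$ with $[k(\alpha):k]=g$ and $H(\alpha)\leq T$ is bounded via Schmidt's estimate (\ref{Schmidtbound}) with ground field $k$, $n=1$ and degree $g$, yielding $\ll T^{mg(g+1)}$. For each such $\alpha$, a second application of (\ref{Schmidtbound}) with ground field $k(\alpha)$ (of absolute degree $mg$), $n=1$ and degree $e/g$ gives $\ll T^{mg(e/g)(e/g+1)}=T^{m(e^2/g+e)}$ admissible $\beta$. Multiplying the two bounds produces the desired exponent $mg(g+1)+m(e^2/g+e)=m(g^2+g+e^2/g+e)=\gamma_g$. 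I do not expect any genuine obstacle: the invariant $\delta_g$ is tailored precisely for this two-step counting, $g\mid e$ holds automatically from the tower $k\subset k(\alpha)\subset K$, and Schmidt's bound is applied with a ground field $k(\alpha)$ that varies with $\alpha$ but enters (\ref{Schmidtbound}) only through its absolute degree $mg$, so the constants can be absorbed into $\ll$.
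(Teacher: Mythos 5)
Your proposal is correct and follows essentially the same route as the paper: reduce to counting pairs $(\alpha,\beta)$ via $H(\alpha,\beta)\geq\max\{H(\alpha),H(\beta)\}$ and injectivity of $K\mapsto(\alpha_K,\beta_K)$, then apply Schmidt's bound (\ref{Schmidtbound}) twice (for $\alpha$ of degree $g$ over $k$, and for $\beta$ of degree $e/g$ over $k(\alpha)$), exactly as the paper does by reference to the proof of Lemma \ref{lemmanonproprimpts}. The only difference is that you spell out the two-step count explicitly where the paper cites the earlier argument.
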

\begin{proof}
Since $H(\alpha_1,\alpha_2)\geq \max\{H(\alpha_1),H(\alpha_2)\}$ it suffices to show that the number of tuples $(\alpha_1,\alpha_2)\in \kbar^2$ with
\begin{alignat*}1
&[k(\alpha_1):k]=g,\\
&[k(\alpha_1,\alpha_2):k(\alpha_1)]=e/g,\\
&H(\alpha_1), H(\alpha_2)\leq T
\end{alignat*}
is $\ll T^{\gamma_g}$.
But the latter can be shown exactly in the same manner as in the proof of Lemma \ref{lemmanonproprimpts}.
\end{proof}
Now we can show the convergence of (\ref{sumerrorterms}). We proceed similar as for (\ref{summainterms}).
\begin{alignat*}3
\sum_{K\in \coll_e(k)}\sum_{g\in G(K/k)}\delta_g(K/k)^{-\mug}&=
\sum_{g\in G_u}\sum_{K\in \coll_e^{(g)}(k)}\delta_g(K/k)^{-\mug}\\
&=\sum_{g\in G_u}\sum_{i=1}^{\infty}\sum_{K\in \coll_e^{(g)}(k)\atop 2^{i-1}\leq \delta_g(K/k)<2^i}\delta_g(K/k)^{-\mug}\\
&\leq \sum_{g\in G_u}\sum_{i=1}^{\infty}\frac{N_{\delta_g}(\coll_e^{(g)}(k),2^i)}{2^{(i-1)\mug}}\\
&\ll \sum_{g\in G_u}\sum_{i=1}^{\infty}2^{i(\gamma_g-\mug)}.
\end{alignat*}
By (\ref{firstcond}) we have $\gamma_g-\mug\leq -2/e$, and this proves the convergence of (\ref{sumerrorterms}).
Therefore, the proof of Theorem \ref{mainthm1} is complete.

\section*{Acknowledgements}
I would like to thank the referees for carefully reading the manuscript and for providing various helpful comments.

\bibliographystyle{amsplain}
\bibliography{literature}

\end{document}